\newtheorem{defi}{Definition} 
\newtheorem{thm}[defi]{Theorem}
 \newtheorem{prop}[defi]{Proposition}
\newtheorem{lemme}[defi]{Lemma}
\newtheorem{cor}[defi]{Corollary}
\newcommand{\tre}[3]{#1&#2&#3\\}
\newcommand{\quattro}[4]{#1&#2&#3&#4\\}
\newcommand{\cinque}[5]{#1&#2&#3&#4&#5\\}
\newcommand{\matrice}{\begin{pmatrix}}
\newcommand{\ok}{\end{pmatrix}}
\newcommand{\dmatrice}{\begin{vmatrix}}
\newcommand{\dok}{\end{vmatrix}}
\newcommand{\twosystem}[2]{\left\{\begin{aligned} &#1\\ &#2\end{aligned}\right.}
\newcommand{\threesystem}[3]{\left\{ \begin{aligned}&#1\\ &#2\\&#3\end{aligned}\right.}
\newcommand{\twovector}[2]{\begin{pmatrix}#1\\#2\end{pmatrix}}
\newcommand{\threevector}[3]{\begin{pmatrix} #1\\#2\\#3\end{pmatrix}}
\newcommand{\fourvector}[4]{\begin{pmatrix} #1\\#2\\#3\\#4\end{pmatrix}}
\newcommand{\twomatrix}[4]{\begin{pmatrix} #1&#2\\#3&#4\end{pmatrix}}
\newcommand{\threematrix}[9]{\begin{pmatrix} #1&#2&#3\\#4&#5&#6\\#7&#8&#9\end{pmatrix}}
\newcommand{\twodet}[4]{\begin{vmatrix} #1&#2\\#3&#4\end{vmatrix}}
\newcommand{\threedet}[9]{\begin{vmatrix} #1&#2&#3\\#4&#5&#6\\#7&#8&#9\end{vmatrix}}
\newcommand{\nero}{\smallskip$\bullet\quad$\rm}
\newcommand{\scal}[2]{\langle{#1},{#2}\rangle}
\newcommand{\abs}[1]{\lvert{#1}\rvert}
\newcommand{\reals}{{\bf R}}
\newcommand{\sphere}[1]{{\bf S}^{#1}}
\newcommand{\hyp}[1]{{\bf H}^{#1}}
\newcommand{\real}[1]{{\bf R}^{#1}}
\newcommand{\tr}{{\rm tr}}
\newcommand{\bd}{\partial}
\newcommand{\derive}[2]{\dfrac{\bd #1}{\bd#2}}
\newcommand{\deriven}[3]{\dfrac{\bd^{#1} #2}{\bd #3^{#1}}}
\begin{document}

\title{Heat flow, heat content and the isoparametric property
\footnote{Classification AMS $2000$: 58J50, 35P15 \newline 
Keywords: Heat equation, heat flow, overdetermined problems, isoparametric hypersurfaces}}

\author{Alessandro Savo}
\date{}

\maketitle
\begin{abstract} 
Let $M$ be a Riemannian manifold and $\Omega$ a compact domain of $M$ with smooth boundary. We study the solution of the heat equation on $\Omega$ having constant unit initial conditions and Dirichlet boundary conditions. The purpose of this paper is to study the geometry of domains for which, at any fixed value of time,  the normal derivative of the solution (heat flow) is a constant function on the boundary. We express this fact by saying that such domains have the {\it constant flow property}. In constant curvature spaces known examples of such domains are given by geodesic balls  and, more generally, by domains whose boundary is connected and isoparametric. The question is: are they all like that?

In this paper we give an  affirmative answer to this question: in fact we prove more generally that, if a domain in an analytic Riemannian manifold has the constant flow property, then every component of its boundary is an isoparametric hypersurface.  For space forms, we also relate the order of vanishing of the heat content with fixed boundary data with the constancy of the $r$-mean curvatures of the boundary and with the isoparametric property. Finally, we discuss the constant flow property in relation to other well-known overdetermined problems involving the Laplace operator, like the Serrin problem or the Schiffer problem.

\end{abstract}
\large

\section{Introduction} 

In this paper, we prove a rigidity result for Riemannian manifolds with boundary satisfying a certain overdetermined problem for the heat equation; the aim is to understand the conditions on the heat content and the heat flow which insure the isoparametric property of the boundary. In this introduction, we first state the main results  in Section \ref{cfp} (Theorem \ref{main} and Theorem \ref{hciso});  then, in Section \ref{op}, we relate the constant flow property to other overdetermined problems.
The isoparametric property is recalled in Section \ref{ib} and, in Section \ref{gen}, we state a general result valid in any smooth Riemannian manifold (Theorem \ref{general}). In Section \ref{pmain}, we show how Theorem \ref{main} follows from Theorem \ref{general} and finally, in Section \ref{pop}, we give the plan of the paper with a rough scheme of the proofs.

\subsection{The constant flow property and the main results} \label{cfp}

 Let $M$ be a Riemannian manifold of dimension $n$, with metric tensor $g$, and let $\Omega$ be a compact domain in $M$  having smooth boundary $\bd\Omega$.  A basic object in heat diffusion 
is the solution $u(t,x)$ of the heat equation on $\Omega$ with initial data $1$ and Dirichlet boundary conditions:
\begin{equation}\label{temp}
\threesystem
{\Delta u+\derive ut=0\quad\text{on}\quad \Omega,}
{u(0,x)=1\quad\text{for all}\quad x\in\Omega,}
{u(t,y)=0\quad\text{for all $y\in\bd\Omega$ \, and $t>0$},}
\end{equation}
where $\Delta$ is the Laplace-Beltrami operator defined by the Riemannian metric $g$. We will often write $u(t,x)$ as  $u_t(x)$ so that $u_0=1$. The interest in the function $u$ is also given by the fact that
\begin{equation}\label{hk}
u(t,x)=\int_{\Omega}k(t,x,y)dy,
\end{equation}
where $k: (0,\infty)\times\Omega\times\Omega\to\reals $ is the heat kernel of $\Omega$ (that is, the fundamental solution of the heat equation with Dirichlet boundary conditions). Note that $u(t,x)$ is the temperature at time $t$, at the point $x\in\Omega$, assuming that the initial temperature distribution is constant, equal to $1$, and that the boundary $\bd\Omega$ is kept at temperature zero at all times.

Now let $\nu$ be the unit normal vector field of $\bd\Omega$, pointing inward, and let $y\in\bd\Omega$.  Then, $\derive{u}{\nu}(t,y)$ can be interpreted as the {\it heat flow} at time $t$, at the boundary point $y$.  

\medskip

\begin{defi}\label{chf} We say that $\Omega$ has the {\rm constant flow property} if, for all fixed  $t>0$, the heat flow
$$
\derive{u}{\nu}(t,\cdot):\bd\Omega\to\reals
$$ 
is a constant function on $\bd\Omega$.
\end{defi}

This property could be seen as an overdetermined problem for the heat equation. Overdetermined problems for the Laplacian have been vastly studied in the literature (see for example \cite{Ber},\cite{B-S},\cite{B-Y},\cite{ES-I},\cite {K-N},\cite{K-P},\cite{Liu},\cite{Mol},\cite {P-S},\cite{Ser},\cite {Wei},\cite{Wil}), and some of them will be recalled in Section \ref{op} below.  Solutions to a specific overdetermined problem exist only for special geometries, and in general one would like to classify  all domains which support such solutions. For the constant flow property above we have the following rigidity theorem. 

\begin{thm}\label{main} Let $\Omega$ be a compact domain with smooth boundary in an analytic Riemannian manifold $M$. Assume that it has the constant flow property. Then each component of  $\bd\Omega$ is an isoparametric hypersurface of $M$.
\end{thm}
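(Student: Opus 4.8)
\medskip\noindent\textbf{Proof proposal.} The plan is to convert the constant flow property into an infinite list of pointwise conditions along $\bd\Omega$ controlling the small--time behaviour of the heat flow, to deduce from these, order by order, the constancy on $\bd\Omega$ of the whole normal Taylor jet of $\Delta\rho$, where $\rho=\dist(\cdot,\bd\Omega)$, and finally to use analyticity of $M$ to promote this to the statement that $\Delta\rho$ is a function of $\rho$ alone in a one--sided collar of $\bd\Omega$ --- i.e.\ that all the parallels of $\bd\Omega$ have constant mean curvature, which is the isoparametric property.

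First, the reduction to boundary data. Since $u_t$ solves $\Delta u_t=-\,\bd u_t/\bd t$ and vanishes identically on $\bd\Omega$ for every $t>0$, one has $\Delta^k u_t=(-1)^k\bd_t^k u_t=0$ on $\bd\Omega$ for all $k\ge 0$; differentiating the constant flow condition in $t$ then shows that $\dfrac{\bd}{\bd\nu}\Delta^k u_t$ is likewise a constant function on $\bd\Omega$ for every $k$. Equivalently --- and this is the form I would work with --- the heat flow admits a small--time asymptotic expansion
\[
\derive{u}{\nu}(t,y)\ \sim\ \frac{1}{\sqrt{\pi t}}\,\sum_{k\ge 0}\beta_k(y)\,t^{k/2}\qquad(t\to 0^{+}),
\]
where $\beta_0\equiv 1$ and each $\beta_k$ is a universal polynomial, evaluated at $y$, in the second fundamental form of $\bd\Omega$, the ambient curvature along $\bd\Omega$ and their covariant derivatives. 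Comparing the expansions at two boundary points, the constant flow property forces every $\beta_k$ to be constant on each component of $\bd\Omega$; already the first nontrivial coefficient gives that the mean curvature of $\bd\Omega$ is constant --- the first--order rigidity familiar from Serrin--type overdetermined problems.

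The heart of the argument is the next step, which I would carry out in Fermi coordinates $(y,\rho)$ on a collar of $\bd\Omega$ inside $\Omega$. There the Laplacian reads $\Delta=-\bd_\rho^{2}+(\Delta\rho)\,\bd_\rho+\Delta_{\Sigma_\rho}$, with $\Sigma_\rho=\{\rho=\const\}$ and $\Delta_{\Sigma_\rho}$ its Laplacian, and $u_t$ has a boundary--layer structure whose leading radial profile is the error function $\tfrac{2}{\sqrt\pi}\int_0^{\rho/(2\sqrt{t})}e^{-s^{2}}\,ds$. Writing $\Delta\rho=\sum_{m\ge 0}\sigma_m(y)\rho^m$, one must show that the invariants $\beta_k$ are assembled, in a suitable grading, from the $\sigma_m$ together with lower--order geometric data, in such a way that constancy of all the $\beta_k$ propagates inductively to constancy of all the $\sigma_m$: granting $\sigma_0,\dots,\sigma_{m-1}$ already constant on $\bd\Omega$, the next datum $\sigma_m$ enters a new invariant linearly and with a nonzero universal coefficient, while the remaining contributions are already under control, so constancy of that invariant forces $\sigma_m$ constant. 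This graded induction --- pinning down precisely how the heat--flow invariants encode the normal jet of $\Delta\rho$, and checking that at each order the new datum appears nontrivially while the corrections reduce to lower order --- is the step I expect to be the main obstacle. It is purely local and uses no analyticity, so it is natural to isolate it as a statement valid on an arbitrary smooth Riemannian manifold --- the role, I expect, of the paper's Theorem~\ref{general} --- with the analytic hypothesis entering only afterwards.

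Finally, with every normal Taylor coefficient $\sigma_m$ of $\Delta\rho$ constant on $\bd\Omega$, I would invoke analyticity of $M$: in Fermi coordinates the metrics on the parallels, and hence $\Delta\rho=\sum_m\sigma_m(y)\rho^m$, are real--analytic in $\rho$, so $\Delta\rho$ is independent of $y$ throughout the collar. Thus $\Delta\rho$ is a function of $\rho$ alone near $\bd\Omega$, i.e.\ every parallel $\Sigma_\rho$ has constant mean curvature, which is exactly the assertion that the component of $\bd\Omega$ under consideration is an isoparametric hypersurface of $M$. Applying this to each component completes the proof.
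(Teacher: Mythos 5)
Your proposal mirrors the paper's strategy: reduce, via the small-time asymptotic expansion of the heat flow, to constancy on $\bd\Omega$ of the full normal jet of $\eta=\Delta\rho$ (the paper's Theorem~\ref{general}), then promote this to radiality of $\eta$ on a collar using analyticity, which is precisely the isoparametric property. You correctly isolate the hard step as a purely local statement valid on any smooth manifold, and you correctly identify the crux: at each order the new normal derivative must enter some coefficient $\beta_k$ linearly with a nonzero universal constant, lower-order terms being already controlled. In the paper this is Proposition~\ref{level} combined with the weight identity of Theorem~\ref{combinatorial}; you do not attempt its (long, combinatorial) proof, but you do not misplace or misstate it either.

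The one genuine difference is at the final analyticity step. The paper first deduces that $\bd\Omega$ itself is analytic (via the Kinderlehrer--Nirenberg and Williams regularity results for the Schiffer problem, applicable because of Theorem~\ref{implies}), so that the normal exponential map and hence $\eta(r,y)$ are jointly analytic, and only then sums the $r$-Taylor series. Your route needs only $r$-analyticity of $\eta$ for each fixed $y$, and you extract it from analyticity of $M$ alone. That claim is correct: along each normal geodesic $\gamma_y$ the shape operator of the parallels satisfies the Riccati equation $\nabla_\nu S=S^2+R_\nu$, whose coefficients (expressed in a parallel frame along $\gamma_y$) are analytic in $r$ because geodesics, parallel transport, and the curvature tensor of an analytic manifold are analytic; a polynomial ODE with analytic coefficients has $r$-analytic solutions for any smooth initial data, so $\eta=\tr S$ is $r$-analytic on $[0,\epsilon)$, which is exactly what the Taylor-series argument uses. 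This bypasses the Schiffer-regularity input entirely and is a clean simplification. You should, however, record the Riccati/Jacobi ODE justification explicitly: the phrase ``the metrics on the parallels are real-analytic in $\rho$'' states the conclusion rather than the argument, and it is precisely the place where one might otherwise worry that mere smoothness of $\bd\Omega$ obstructs analyticity in $\rho$.
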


Theorem \ref{main} follows from a more general result, valid  in arbitrary smooth Riemannian manifolds (Theorem \ref{general} below) and proved by studying the complete asymptotic expansion of the heat flow for small time, which was obtained in \cite{Savo1} and \cite{Savo2}.

The definition and the main properties of isoparametric hypersurfaces will be recalled in Section \ref{ib} below; we only recall here that, in space forms, isoparametric hypersurfaces are characterized by having constant principal curvatures. When the ambient space is $\real n$ or $\hyp n$  the only compact isoparametric hypersurfaces are the geodesic spheres, and Theorem \ref{main} asserts that in those cases the only domains with the constant flow property are geodesic balls (this is an easy case, immediately obtained from the Alexandrov theorem and the first two terms of the heat flow asymptotics, see the end of Section \ref{two} for the short proof).

Things get much more interesting and complicated  in the sphere $\sphere n$, also due to the fact that there is no analogue of the Alexandrov theorem, and that there is abundance of isoparametric hypersurfaces not isometric to geodesic spheres.  From the work of M\"unzner \cite{Mun} we know that a connected isoparametric hypersurface $\Sigma$ of the canonical sphere divides the sphere into two domains with common boundary $\Sigma$. 
It was proved by Shklover in (\cite{Shk}, Section 5.3 p. 562), that any spherical domain  with connected, isoparametric boundary has the constant flow property. 
This fact, together with Theorem \ref{main} above, gives the following characterization of the isoparametric property in space forms.

\begin{cor} \label{char} A compact, connected, hypersurface of a space form is isoparametric if and only if it bounds a domain having the constant flow property. 
\end{cor}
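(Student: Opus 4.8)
The plan is to establish both implications of Corollary~\ref{char} by combining Theorem~\ref{main} with the result of Shklover cited above, so the work is essentially one of assembly. Let $\Sigma$ be a compact connected hypersurface of a space form $M^n$ (so $M$ is $\sphere n$, $\real n$ or $\hyp n$). In all three cases $M$ is analytic, and a compact connected hypersurface separates $M$ into (at least) two pieces; choosing one such component as $\Omega$ we have $\bd\Omega = \Sigma$ connected, so the hypotheses of Theorem~\ref{main} will apply once we know $\Omega$ has the constant flow property.

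For the ``only if'' direction, suppose $\Sigma$ is isoparametric. In $\real n$ and $\hyp n$ a compact connected isoparametric hypersurface is a geodesic sphere bounding a geodesic ball, and it is classical (and also a consequence of the heat flow asymptotics, as noted after Theorem~\ref{main}) that geodesic balls have the constant flow property. In $\sphere n$, M\"unzner's theorem says $\Sigma$ divides the sphere into two domains with common boundary $\Sigma$; take $\Omega$ to be either one. Then the quoted theorem of Shklover (\cite{Shk}, Section 5.3) asserts precisely that such a domain with connected isoparametric boundary has the constant flow property. Hence in every space form, an isoparametric $\Sigma$ bounds a domain with the constant flow property.

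For the ``if'' direction, suppose $\Sigma = \bd\Omega$ for some compact domain $\Omega$ with the constant flow property. Since $M$ is analytic, Theorem~\ref{main} applies and tells us that every component of $\bd\Omega$ is isoparametric; as $\bd\Omega = \Sigma$ is connected by assumption, $\Sigma$ itself is isoparametric. Combining the two directions gives the stated equivalence.

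The only point that requires a little care, rather than a genuine obstacle, is the interface between the hypotheses of the two ingredients: Theorem~\ref{main} needs $\Omega$ to be a \emph{compact domain with smooth boundary}, while Shklover's statement and M\"unzner's separation result are phrased for domains cut out by a connected isoparametric hypersurface; one should check that in the sphere both ``halves'' determined by $\Sigma$ are indeed compact domains with smooth boundary equal to $\Sigma$ (which is immediate from M\"unzner's description), and that in $\real n$, $\hyp n$ the connectedness hypothesis on $\Sigma$ together with compactness forces $\Sigma$ to bound on exactly one side a compact region — again standard. No new estimates or computations are needed beyond invoking Theorem~\ref{main} and the cited literature.
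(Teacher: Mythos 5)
Your argument is correct and follows exactly the route the paper indicates: the ``if'' direction is Theorem~\ref{main} applied to the bounded domain (using analyticity of the space form and connectedness of $\Sigma=\bd\Omega$), and the ``only if'' direction is Shklover's result for the sphere together with the classical fact that geodesic balls in $\real n$ and $\hyp n$ have the constant flow property, with M\"unzner's separation theorem supplying the domain in the spherical case. The paper does not spell this out any further, so your assembly is essentially its intended proof.
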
 

Next, we examine the relations between the heat content and the isoparametric property. First, let us fix a smooth function $f\in C^{\infty}(\bd\Omega)$ and consider the solution $\tilde f_t(x)\doteq \tilde f(t,x)$ of the heat equation with zero initial temperature and with boundary temperature prescribed by the function $f$, that is:
\begin{equation}\label{pbc}
\threesystem
{\Delta \tilde f_t+\derive {\tilde f_t}t=0}
{\tilde f_0(x)=0\quad\text{for all}\quad x\in\Omega}
{\tilde f_t(y)=f(y)\quad\text{for all $y\in\bd\Omega$ \, and $t>0$}.}
\end{equation}
We  call the function of $t\in (0,\infty)$:
\begin{equation}\label{hcb}
\tilde H_{f}(t)\doteq \int_{\Omega}\tilde f_t
\end{equation}
the {\it heat content with boundary data $f$}. It is clear that $\lim_{t\to 0}\tilde H_{f}(t)=0$; if moreover $f$ integrates to zero on $\bd\Omega$ then $\tilde H_f(t)$ vanishes to order at least $1$ as $t\to 0$. In fact, it is easy to prove (see Theorem \ref{zeroflow}) that 

\nero {\it A domain $\Omega$ has the constant flow property if and only if the heat content with boundary data in $C^{\infty}_0(\bd\Omega)\doteq\{f\in C^{\infty}(\bd\Omega):\int_{\bd\Omega}f=0\}$ is equal to zero at all times.}

\medskip

Then, domains with the constant flow property are {\it perfect heat diffusers}: the incoming heat, flowing inside from theregions where the boundary temperature is positive is perfectly balanced, at each time, by the outgoing heat, flowing  away in correspondence to the negative boundary temperature: total heat content \eqref{hcb} is constant in time, hence always zero. This holds regardless of the temperature distribution on the boundary, as long as it has zero mean.

With this in mind, Theorem \ref{main} asserts that
if the heat flow with boundary data in $C^{\infty}_0(\bd\Omega)$ vanishes identically then each component of the boundary is isoparametric; that is,  perfect heat diffusers have isoparametric boundary (and viceversa, at least in space forms, if the boundary is connected).

\smallskip

In our second main result, we observe that the order of vanishing of the heat content with  boundary data in $C^{\infty}_0(\bd\Omega)$ is somewhat related to a kind of {\it degree of isoparametricity} of the boundary, at least  if the ambient space has constant curvature. Moreover, we will also prove that only a certain (finite) order of vanishing  is needed in order  to insure the isoparametric property: in a certain sense we can weaken, in space forms, the constant flow assumption of Theorem \ref{main}.

To be more precise, let $S$ denote the shape operator $\bd\Omega$, with eigenvalues (i.e. principal curvatures) denoted $k_1,\dots,k_{n-1}$. 
For $r=1,\dots,n-1$, define the {\it mean curvature of order $r$} as the $r$-th elementary function of the principal curvatures:
$$
E_r=\sum_{1\leq i_1<\dots<i_r\leq n-1} k_{i_1}\dots k_{i_r}
$$
(other authors normalize by a suitable constant, but this would not affect the discussion here). For $r=1$ we get indeed a multiple of the mean curvature, and for $r=n-1$ the Gauss-Kronecker curvature. In fact, $E_r$ is the  coefficient of  $x^{n-1-r}$ in the characteristic polynomial of $S$ (up to sign). Our second main result is the following. As usual, the writing
$\tilde H_f(t)\sim o(t^{\alpha})$ means that $\lim_{t\to 0} t^{-\alpha}\tilde H_f(t)=0$. For the proof, see Section \ref{phciso}. 

\begin{thm} \label{hciso}
\item (a) Let $\Omega$ be  a domain in $\real n$ or ${\bf H}^n$. If 
$\tilde H_f(t)\sim o(t)$ for all $ f\in C^{\infty}_0(\bd\Omega)$, then $\Omega$ is a geodesic ball.

\item (b) Now assume that $\Omega$ is a domain in $\sphere n$, and that, for some integer $k\geq 2$, 
$\tilde H_f(t)\sim o(t^{\frac k2})$ for all $ f\in C^{\infty}_0(\bd\Omega)$.
Then the  mean curvatures  $E_1,\dots,E_{k-1}$ are all constant on $\bd\Omega$. 

\item (c) In particular, if $\Omega\subseteq\sphere n$ and 
$\tilde H_f(t)\sim o(t^{\frac n2})$ for all $f\in C^{\infty}_0(\bd\Omega)$,
then $\bd\Omega$ is isoparametric (and consequently $\tilde H_f(t)$ is identically zero at all times). 
\end{thm}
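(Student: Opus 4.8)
The plan is to reduce everything to the small-time asymptotic expansion of the heat content $\tilde H_f(t)$ with boundary data $f$, which (by the results of \cite{Savo1},\cite{Savo2} invoked for Theorem \ref{general}) has an expansion in powers of $t^{1/2}$ whose coefficients are integrals over $\bd\Omega$ of universal polynomials in $f$, its tangential derivatives, the principal curvatures $k_i$ (equivalently the $E_r$), and — when the ambient metric is not flat — the ambient curvature and its covariant derivatives. The key structural point, to be isolated first, is that in a space form the only curvature contribution is through the constant sectional curvature $\kappa\in\{0,-1,1\}$, so the coefficient of $t^{j/2}$ in $\tilde H_f(t)$ is $\int_{\bd\Omega} P_j(f;E_1,\dots,E_{j-1};\kappa)$ for an explicit polynomial $P_j$; moreover $P_j$ is linear in $f$ together with its derivatives, since the boundary-value problem \eqref{pbc} is linear in $f$. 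I would extract the first few $P_j$ from the cited asymptotics (or recompute them by the iterative scheme used there): $P_1$ is a constant multiple of $f$ itself, $P_2$ involves $E_1 f$ plus a tangential-Laplacian term, and in general $P_j$ picks up $E_{j-1} f$ as its "new" highest-curvature term plus lower-order combinations.

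For part (a): $\tilde H_f(t)=o(t)$ kills the $t^{1/2}$ and $t^1$ coefficients. The $t^{1/2}$ coefficient vanishing is automatic for $f\in C^\infty_0(\bd\Omega)$ (it is a multiple of $\int_{\bd\Omega} f=0$), but the $t^1$ coefficient forces $\int_{\bd\Omega}(\text{const}\cdot E_1\, f + \text{tangential terms})=0$ for \emph{all} $f$ of zero mean; integrating the tangential-derivative terms by parts onto $f$ and using arbitrariness of $f$ yields that $E_1$ is constant on each component of $\bd\Omega$. In $\real n$ or $\hyp n$, a compact embedded hypersurface bounding a domain with constant mean curvature is a geodesic sphere by the Alexandrov theorem (exactly as used at the end of Section \ref{two}); one must also note $\bd\Omega$ is connected, which follows because a geodesic sphere is connected and Alexandrov applies to the full boundary. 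For part (b): inductively, vanishing of the $t^{j/2}$ coefficient for $j=2,\dots,k$ gives, for all zero-mean $f$, an identity $\int_{\bd\Omega}(c_j E_{j-1} f + R_j(f))=0$ where $R_j$ is built from $f$, its tangential derivatives, and the already-established constants $E_1,\dots,E_{j-2}$; after integration by parts $R_j$ contributes terms of the form $(\text{function of the }E_i\text{'s and }\kappa)\cdot f$, so the bracket reduces to $(c_j E_{j-1} + \text{function of constants})\cdot f$ plus a genuine divergence, and arbitrariness of the zero-mean $f$ forces $E_{j-1}$ to be constant. Running the induction from $j=2$ up to $j=k$ proves $E_1,\dots,E_{k-1}$ are all constant. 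Part (c) is the case $k=n$: then $E_1,\dots,E_{n-1}$ are all constant, hence the characteristic polynomial of the shape operator $S$ has constant coefficients, so the principal curvatures are constant, i.e. $\bd\Omega$ is isoparametric in $\sphere n$; the final parenthetical assertion that $\tilde H_f\equiv 0$ then follows from Shklover's theorem plus the characterization in Theorem \ref{zeroflow}, applied on each component (or directly, since every isoparametric hypersurface of $\sphere n$ is connected by M\"unzner's theorem).

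The main obstacle I anticipate is controlling the lower-order remainder $R_j$ precisely enough to be sure that, after integration by parts, it contributes \emph{only} a multiple of $f$ (times constants) plus an exact divergence, with no surviving term that is a non-constant function times $f$ which could conspire to cancel the $E_{j-1}f$ term. This is plausible because at stage $j$ every curvature quantity of order $<j-1$ entering $R_j$ has already been shown constant, and the structure of the heat-content coefficients in \cite{Savo1},\cite{Savo2} is such that $E_{j-1}$ enters $P_j$ \emph{only} linearly and \emph{only} multiplied by $f$ (no derivatives) — but verifying this "leading-term" structure of each $P_j$, uniformly in $j$, is the delicate bookkeeping step, and it is where the space-form hypothesis (constancy of the ambient curvature) is essential to keep the combinatorics closed. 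A secondary point is justifying that one may test against \emph{all} $f\in C^\infty_0(\bd\Omega)$ on a possibly disconnected boundary and conclude componentwise constancy; this is routine but should be stated.
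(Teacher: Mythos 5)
Your overall strategy coincides with the paper's: expand $\tilde H_f(t)$ in powers of $t^{1/2}$ using the heat-flow asymptotics, observe via \eqref{tildehprime} that the coefficient of $t^{j/2}$ is $\int_{\bd\Omega}fB_j$ (so each $B_j$ must be constant when tested against all zero-mean $f$), and run an induction up to order $k$. Your part (a) matches the paper's, and your part (c) is also the paper's argument plus a correct remark on the parenthetical.

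However, the step you flag as ``delicate bookkeeping'' is precisely the hard core of the theorem, and it is not established by your proposal. You assert that, once $E_1,\dots,E_{j-2}$ are constant, the coefficient $P_j$ reduces to $(c_j E_{j-1}+\text{const})\cdot f$ plus a divergence with $c_j\neq 0$. The paper does not take this for granted: it is the content of Proposition \ref{level} (the structure claim) together with Theorem \ref{combinatorial} (nonvanishing of the leading coefficient $w(\bar D_{j})$ for every $j$), and the proof of the latter occupies Sections \ref{prec}--\ref{appendix}, culminating in Hankel-determinant identities drawn from \cite{Tamm}. Without a proof that the weight $w(\bar D_j)$ never vanishes, the induction in your part (b) could break at some $j$, since a zero coefficient would let the $E_{j-1}f$ term drop out and leave $E_{j-1}$ undetermined.

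There is also a presentational gap: the heat-content coefficients are most naturally expressed in terms of normal derivatives of $\eta=\Delta\rho$ (via the operator algebra ${\cal A}(N,\Delta)$), not directly in terms of the $E_r$'s. The paper's route is therefore $B_j$ constant $\Rightarrow$ $\deriven{j-2}{\eta}{\nu}$ constant (Proposition \ref{inductive}) $\Rightarrow$ $\tr(S^{j-1})$ constant (via the Riccati equation $\nabla_\nu S=S^2+R_\nu$, using that $R_\nu$ has constant trace on the sphere) $\Rightarrow$ $E_{j-1}$ constant (Newton's identities). Your proposal collapses the middle two steps into an unstated change of basis from normal derivatives of $\eta$ to the $E_r$'s; this translation is correct in space forms but needs to be made explicit, and it is exactly where the constant-curvature hypothesis enters (as you suspect).

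In short: right outline, but the crux --- the universal nonvanishing of the leading coefficient --- is the result, not a bookkeeping detail, and the translation from $\eta$-invariants to mean curvatures requires the Riccati/Newton argument which your proposal leaves implicit.
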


Again, the different behavior in (a) (which is very easy to prove) as opposed to (b) and (c) (which are much more complicated) is due essentially to the validity of the Alexandrov theorem. 

%%%%%%%%%%%%%%%%%%%%%%%%%%%%%%%%%%%%%%%%%%%%%%%%%%%%

\subsection{Relation with other overdetermined problems}\label{op}

Perhaps the seminal work in this field  was done by J. Serrin, who considered the following overdetermined problem, and inspired a good part of the following research:
\begin{equation}\label{serrin}
\twosystem
{\Delta v=1\quad\text{on}\quad\Omega,}
{v=0,\quad \derive v{\nu}=c\quad\text{on}\quad\bd\Omega,}
\end{equation}
where $c$ is a constant. Problem \eqref{serrin} is often referred  to as the {\it Serrin problem}. 
Domains supporting a solution to \eqref{serrin} are termed {\it harmonic} in \cite{R-S} because they are characterized by the following property: the mean value of a harmonic function on $\Omega$ equals its mean value on $\bd\Omega$. Serrin's celebrated result states that the only Euclidean domains which support a solution to \eqref{serrin} are balls. More generally, he proved in \cite{Ser} that the only Euclidean domains admitting a {\it positive} solution to the overdetermined problem
\begin{equation}\label{serring}
\twosystem
{\Delta v=F(v)\quad\text{on}\quad\Omega,}
{v=0,\quad \derive v{\nu}=c\quad\text{on}\quad\bd\Omega,}
\end{equation}
are balls. This rigidity result was later extended to the hyperbolic space and the hemisphere in \cite{K-P}. However,  on the whole sphere (and in any other manifold)  the classification problem is, to the best of our knowledge, still open (but see the next Section). 

\smallskip

Another famous problem  is the so-called {\it Schiffer problem (D)} for a given Dirichlet eigenvalue $\lambda$ (for more details see \cite{Shk}): 
\begin{equation}\label{schiffer}
\twosystem
{\Delta u=\lambda u\quad\text{on}\quad\Omega,}
{u=0, \, \derive{u}{\nu}=c\quad\text{on}\quad\bd\Omega.}
\end{equation}
We note that on a Euclidean ball any radial eigenfunction is a solution to \eqref{schiffer}; thus, there are solutions to the above problem for infinitely many eigenvalues $\lambda$. 
The following conjecture, proposed by Berenstein and often called {\it Schiffer conjecture (D)},  seems to be still open, even in Euclidean space:

\smallskip

{\bf Conjecture.} {\it Let $\Omega$ be a Euclidean domain and let $\lambda$ be any fixed Dirichlet eigenvalue of $\Omega$.  If $\Omega$ supports a solution to \eqref{schiffer} then $\Omega$ is a ball.}

\smallskip

There are several partial results related to this conjecture, see for example \cite{B-Y}. It is also known that the Neumann version of this conjecture (obtained by changing the boundary conditions in \eqref{schiffer} to $\derive{u}{\nu}=0, u=c$ and known as {\it Schiffer conjecture (N)}) is equivalent to the famous Pompeiu problem (the interested reader could consult \cite{B-S} and \cite{Wil}).

\smallskip

Now let us consider problem \eqref{schiffer} when $\lambda=\lambda_1(\Omega)$, the lowest Dirichlet eigenvalue of $\Omega$:

\begin{equation}\label{extremal}
\twosystem
{\Delta u=\lambda_1(\Omega) u\quad\text{on}\quad\Omega,}
{u=0, \, \derive{u}{\nu}=c\quad\text{on}\quad\bd\Omega.}
\end{equation}
In this case, domains for which a solution exists are called {\it extremal domains}: they are critical points of the first Dirichlet eigenvalue under volume preserving deformations of $\Omega$ (this follows from Hadamard's formula, see \cite{Sch} and \cite{ES-I}). 
It follows immediately from Serrin's result \cite{Ser}, and the existence of a positive eigenfunction associated to $\lambda_1(\Omega)$,   that the only Euclidean domains supporting a solution to \eqref{extremal} are balls: this by the way shows that the Schiffer  conjecture (D) above is true for the first eigenvalue. However, the classification of extremal domains in general Riemannian manifolds (in particular, in the sphere) is still an open problem. 
Interesting families of extremal domains of small prescribed volume are shown to exist near any nondegenerate critical point of the scalar curvature of  any Riemannian manifold: see \cite{P-S}.

\smallskip

We will observe in Section \ref{rop}  that the constant flow property implies existence of a solution to all of the above problems (see also \cite{Shk}).  That is:

\begin{thm}\label{implies}Let $\Omega$ be a Riemannian domain having the constant flow property. Then:

\item a) $\Omega$ supports a solution to the Serrin problem \eqref{serrin}; 

\item b)  $\Omega$ supports a solution to the Schiffer problem \eqref{schiffer} for infinitely many Dirichlet eigenvalues; 

\item c) $\Omega$ is an extremal domain (that is, it supports a solution to \eqref{extremal}).
\end{thm}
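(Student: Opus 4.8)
The plan is to read off all three solutions from the spectral decomposition of the temperature function $u$ of \eqref{temp} (in the spirit of \cite{Shk}). First I would fix an $L^2(\Omega)$-orthonormal basis $\{\phi_k\}$ of Dirichlet eigenfunctions, $\Delta\phi_k=\lambda_k\phi_k$ with $0<\lambda_1\le\lambda_2\le\cdots$ and $\phi_k=0$ on $\bd\Omega$, so that with $a_k\doteq\int_\Omega\phi_k$ one has $u(t,\cdot)=\sum_k a_ke^{-\lambda_k t}\phi_k$, the series converging with all its derivatives uniformly on $\bar\Omega$ for each fixed $t>0$ (parabolic smoothing). Grouping by the \emph{distinct} eigenvalues $\mu_1<\mu_2<\cdots$ and setting $\psi_\mu\doteq\sum_{\lambda_k=\mu}a_k\phi_k$, this reads $u(t,\cdot)=\sum_je^{-\mu_jt}\psi_{\mu_j}$, hence on $\bd\Omega$, $\derive u\nu(t,\cdot)=\sum_je^{-\mu_jt}\,\derive{\psi_{\mu_j}}\nu$.

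For (b) and (c) I would argue as follows. The constant flow property says $\derive u\nu(t,\cdot)$ is constant on $\bd\Omega$ for each $t>0$; applied at two boundary points $y,y'$, it shows that the Dirichlet series $t\mapsto\sum_je^{-\mu_jt}\left(\derive{\psi_{\mu_j}}\nu(y)-\derive{\psi_{\mu_j}}\nu(y')\right)$ vanishes on $(0,\infty)$, and since $\mu_j\to\infty$ its coefficients can be extracted one at a time (multiply by $e^{\mu_1t}$, let $t\to\infty$, iterate), so each $\derive{\psi_{\mu_j}}\nu$ is a constant $c_j$ on $\bd\Omega$. Because $\Delta\psi_{\mu_j}=\mu_j\psi_{\mu_j}$ and $\psi_{\mu_j}=0$ on $\bd\Omega$, whenever $\psi_{\mu_j}\not\equiv 0$ the function $\psi_{\mu_j}$ is a solution of the Schiffer problem \eqref{schiffer} for $\lambda=\mu_j$ (nontrivial, and with $c_j\neq 0$ by unique continuation). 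Then I would check that $\psi_{\mu_j}\not\equiv 0$ for infinitely many $j$: otherwise $1=u(0,\cdot)=\sum_j\psi_{\mu_j}$ would be a finite linear combination of Dirichlet eigenfunctions and hence vanish on $\bd\Omega$, which is absurd. This gives (b); and (c) follows because $\lambda_1(\Omega)$ is simple with a positive eigenfunction $\phi_1$, so $a_1=\int_\Omega\phi_1>0$, whence $\psi_{\lambda_1}=a_1\phi_1\not\equiv 0$ is a solution of \eqref{extremal} and $\Omega$ is extremal.

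For (a) the candidate is the torsion function $w$ (i.e. $\Delta w=1$ in $\Omega$, $w=0$ on $\bd\Omega$), and I must show $\derive w\nu$ is constant. Writing $\varphi(t)$ for the common boundary value of $\derive u\nu(t,\cdot)$, integration of $\Delta u=-\partial_t u$ over $\Omega$ plus the divergence theorem give $\tfrac{d}{dt}\int_\Omega u(t)=-\abs{\bd\Omega}\,\varphi(t)$, and since $0\le u\le 1$ and $\int_\Omega u(t)\to 0$ this forces $\int_0^\infty\varphi(t)\,dt=\abs\Omega/\abs{\bd\Omega}<\infty$. Next, for any $\eta\in C^\infty(\bd\Omega)$ with harmonic extension $\chi$ to $\Omega$, Green's identity for the pair $(\chi,u(t,\cdot))$ gives $\tfrac{d}{dt}\int_\Omega\chi\,u(t)=-\varphi(t)\int_{\bd\Omega}\eta$; integrating over $(0,\infty)$ and using $u(0,\cdot)=1$, $u(t,\cdot)\to 0$ yields $\int_\Omega\chi=\tfrac{\abs\Omega}{\abs{\bd\Omega}}\int_{\bd\Omega}\eta$, while Green's identity for $(\chi,w)$ gives $\int_\Omega\chi=\int_{\bd\Omega}\eta\,\derive w\nu$. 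Comparing these for all $\eta$ forces $\derive w\nu\equiv\abs\Omega/\abs{\bd\Omega}$ on $\bd\Omega$, so $v=w$ solves \eqref{serrin}.

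I expect no serious obstacle here: the statement is essentially bookkeeping on the spectral side, and the geometry plays no role. The one point requiring care is the "decoupling across eigenspaces" used in (b)–(c) — deducing from the constancy of the total heat flow that each $\derive{\psi_{\mu_j}}\nu$ is separately constant — which rests on the uniform convergence of the series for $\derive u\nu(t,\cdot)$ on $\bd\Omega$ for $t$ bounded away from $0$ (parabolic regularity) together with the linear independence of the exponentials $e^{-\mu_jt}$; beyond that, everything is a direct manipulation of the spectral expansion of $u$ and of Green's identity.
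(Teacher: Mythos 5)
Your proof is correct, and it follows essentially the same route as the paper: for (b)--(c) you exploit the spectral expansion of $u$ and linear independence of the exponentials $e^{-\mu_j t}$ to deduce that each eigenspace projection of $u$ has constant normal derivative (the paper packages this slightly differently, through the heat content $H_\phi(t)$ and a specially chosen eigenbasis $\{\phi_j^+\}$ with $\int_\Omega\phi_j^+>0$, but the mechanism is the same), and for (a) you use Green's identity applied to the torsion function and harmonic extensions. The only cosmetic difference is that in (a) you integrate the flow over all $t>0$ to obtain the explicit relation $\int_\Omega\chi=\tfrac{|\Omega|}{|\partial\Omega|}\int_{\partial\Omega}\eta$, whereas the paper routes this through its Theorem \ref{zeroflow} (constancy of $\tfrac{\partial u}{\partial\nu}$ on $\partial\Omega$ is equivalent to $\int_\Omega\phi=0$ for every harmonic $\phi$ with $\int_{\partial\Omega}\phi=0$) and only needs the case of mean-zero boundary data; both are legitimate and equivalent in content.
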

For the proof, see Section 2. 

Finally let us mention the results of  Magnanini and Sakaguchi in \cite{M-S}, related to  the function $u(t,x)$  defined in \eqref{temp}. A hypersurface $\Sigma$ contained in the interior of $\Omega$ is said to be {\it stationary isothermic} if it is isothermic for all times $t>0$; that is, if there exists a smooth function $\psi:(0,\infty)\to(0,\infty)$ such that:
$$
u(t,x)=\psi(t)\quad\text{for all $t>0$ and $x\in \Sigma$.}
$$
The authors then show that if $\Omega$ is a bounded, convex domain of $\real n$ admitting a stationary isothermic hypersurface, then $\Omega$ is a ball (the result continues to hold under less restrictive assumptions on the boundary of $\Omega$, see \cite{M-S} for more details). This problem could also be seen as an overdetermined problem for the heat equation.

%%%%%%%%%%%%%%%%%%%%%%%%%%%%%%%%%%%%%%%%%%%%%%%%%%%

\subsection{The isoparametric property}\label{ib}

We have seen that classification theorems for the Serrin problem \eqref{serrin} and for extremal domains have been proved so far only when the ambient manifold is Euclidean space, the hyperbolic space or the hemisphere: therefore, the natural question is whether, on the whole sphere, there exist other "exotic" examples (that is, examples not isometric to geodesic balls).  

\smallskip

The answer is actually affirmative, and a first family of such examples was constructed by Berenstein in \cite{Ber2}: this is the family  of domains in $\sphere n$ bounded by certain Clifford tori (tubes around a great circle  in $\sphere n$).   In fact, these domains admit solutions to the Schiffer problem \eqref{schiffer} for infinitely many eigenvalues.  

\smallskip

The matter was later expanded and clarified in \cite{Shk}, where it is observed that if a domain in $\sphere n$  has a connected, isoparametric boundary then it supports a solution to the Serrin problem and also to the Schiffer problem $(D)$ for infinitely many eigenvalues (see \cite{Shk}, Theorem 2 p. 549). Moreover, it has the constant flow property in the sense of Definition \ref{chf} (as proved in Section 5.3 of \cite{Shk}).  Thus, the sphere hosts a large variety of new examples. Let us then recall the general definition of the isoparametric property. 

\smallskip

Let $M$ be a  Riemannian manifold  and $U$ an open subset of $M$. A smooth function $F: U\to \reals$ is called {\it isoparametric} if there exist smooth functions 
$A$ and $B$ defined on the range of $F$ such that:
\begin{equation}\label{iso}
\twosystem
{\Delta F=A\circ F,}
{\abs{\nabla F}^2=B\circ F.}
\end{equation}
Then, the (smooth) hypersurface $\Sigma$ of $U\subseteq M$ is called {\it isoparametric} if it is a regular level set of an isoparametric function.  In fact, any isoparametric function defines a whole one-parameter family of isoparametric hypersurfaces, and any two members of the family are at constant distance to each other.  For the main facts on isoparametricity see the standard reference \cite{Tho} and also \cite{Wan}. Let us observe some equivalent, more geometric, definitions.  If $\Sigma$ is  a smooth hypersurface of $M$ and $\rho:M\to\reals$ is the distance function to $\Sigma$, then  the level sets (equidistants) $\rho^{-1}(r)$ are smooth provided that $r<\epsilon$ is small enough. We have the following characterizations; the first follows easily from the definition \eqref{iso}, while the second is due to Cartan \cite{Car}.

\begin{thm} \label{cartan} \item a) The hypersurface $\Sigma$ is isoparametric if and only if all equidistants  sufficiently close to $\Sigma$ have constant mean curvature.

\item b) $\Sigma$ is isoparametric in a space form $M$  if and only if it has constant principal curvatures (that is, the characteristic polynomial of the shape operator of $\Sigma$ is the same at all points).
\end{thm}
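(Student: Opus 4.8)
The plan is to treat the two characterizations separately, both of them reducing to the behaviour of the shape operators of the equidistants, and in the second case to the Riccati equation along the normal geodesics.

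\smallskip

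For part (a), work in a tubular neighbourhood of $\Sigma$ (assuming $\Sigma$ two-sided, which is the relevant case since it will bound a domain) and let $\rho$ be the signed distance function to $\Sigma$, which is smooth there and satisfies $\abs{\nabla\rho}^2\equiv 1$; moreover $\Delta\rho$ restricted to the equidistant $\rho^{-1}(r)$ equals, up to sign, the mean curvature of $\rho^{-1}(r)$ (because $\nabla\rho$ is its unit normal and $\mathrm{Hess}\,\rho(\nabla\rho,\cdot)=0$). If all equidistants near $\Sigma$ have constant mean curvature, then $\Delta\rho$ is constant on each level set of $\rho$, so $\Delta\rho=A\circ\rho$ and $\abs{\nabla\rho}^2=1=B\circ\rho$ with $A,B$ smooth on the range of $\rho$; hence $\rho$ is isoparametric and $\Sigma=\rho^{-1}(0)$ a regular level set, i.e. $\Sigma$ is isoparametric. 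Conversely, let $\Sigma=F^{-1}(c_0)$ with $F$ isoparametric; since $\nabla F\neq 0$ near $\Sigma$ we may set $G'=1/\sqrt B$ and $g=G\circ F$, so that $\nabla g=\nabla F/\abs{\nabla F}$ has $\abs{\nabla g}\equiv 1$. Then $g$ is, up to an additive constant, the signed distance to any of its level sets, and its level sets coincide with the equidistants of $\Sigma$; therefore $F=\phi\circ\rho$ for a local diffeomorphism $\phi$, and from $\Delta F=\phi''\circ\rho+(\phi'\circ\rho)\,\Delta\rho=A\circ F$ we conclude that $\Delta\rho$ is a function of $\rho$ alone, i.e. each equidistant has constant mean curvature. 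This is the ``easy'' direction announced in the text.

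\smallskip

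For part (b), work in a space form of constant curvature $\kappa$. Parametrizing the equidistants by arc length $r$ along the normal geodesics and parallel-transporting an orthonormal frame, the shape operators $S_r$ satisfy the matrix Riccati equation $S_r'=S_r^2+\kappa\,\mathrm{Id}$ (the curvature operator $X\mapsto R(X,\nu)\nu$ being $\kappa\,\mathrm{Id}$ on the tangent space of the hypersurface). If $\Sigma$ has constant principal curvatures then, the curvature term being scalar, the eigendirections are preserved along each normal geodesic and each principal curvature solves the scalar ODE $k'=k^2+\kappa$ with initial datum independent of the point; hence every equidistant has constant principal curvatures, in particular constant mean curvature, and part (a) gives that $\Sigma$ is isoparametric. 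For the converse — this is Cartan's argument — assume $\Sigma$ isoparametric; by part (a), $\tr S_r$ is constant on each equidistant for all small $r$. Differentiating in $r$ and using the Riccati equation gives $\frac{d}{dr}\tr(S_r^m)=m\,\tr(S_r^{m+1})+m\kappa\,\tr(S_r^{m-1})$, so that $\tr(S_r^{m+1})$ is constant on each equidistant as soon as $\tr(S_r^m)$ and $\tr(S_r^{m-1})$ are; starting from $m=0,1$ this propagates to all power sums $\tr(S_r^m)$. Evaluating at $r=0$ and passing from power sums to elementary symmetric functions by Newton's identities, we conclude that $E_1,\dots,E_{n-1}$ — equivalently the characteristic polynomial of the shape operator of $\Sigma$ — are constant.

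\smallskip

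The routine points (smoothness of $\rho$ on the tube, the sign conventions, integrating the scalar ODEs) I would leave out. The only genuinely delicate step is the last one: bootstrapping from ``constant mean curvature of the whole parallel family'' to ``constant principal curvatures''. It is precisely here that constant ambient curvature is indispensable, since only then is the curvature term in the Riccati equation scalar, so that it neither rotates the principal directions nor mixes the symmetric functions of $S_r$; in a general (even analytic) ambient manifold part (b) genuinely fails, which is why Theorem \ref{main} must be stated in terms of isoparametricity rather than of constant principal curvatures.
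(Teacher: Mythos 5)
The paper does not actually prove this theorem: it states that part (a) ``follows easily from the definition \eqref{iso}'' and attributes part (b) to Cartan, citing \cite{Car}. So there is no in-paper proof to compare against; you have supplied one, and it is correct. Your argument for (a) is the expected one: for the forward direction, $\rho$ itself is an isoparametric function (since $\abs{\nabla\rho}\equiv 1$ and constant mean curvature of the equidistants makes $\Delta\rho$ a function of $\rho$); for the converse, the reparametrization $g=G\circ F$ with $G'=1/\sqrt B$ reduces to the case $\abs{\nabla g}\equiv1$, whose level sets are the equidistants, and the chain rule for $\Delta(\phi\circ\rho)$ shows $\Delta\rho$ is a function of $\rho$ alone (there is a sign ambiguity depending on the Laplacian convention, but it does not affect the conclusion since $\phi'\neq0$). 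Your argument for (b) is the standard modern proof of Cartan's theorem via the matrix Riccati equation $S_r'=S_r^2+\kappa\,\mathrm{Id}$: constant ambient curvature makes the curvature term scalar, so the principal directions are parallel along normal geodesics and each principal curvature obeys a scalar ODE, whence constant principal curvatures propagate to all equidistants; and conversely one bootstraps from the constancy of $\tr S_r$ through all power sums $\tr(S_r^m)$ and then uses Newton's identities. It is worth noting that this converse argument is literally the same Riccati-plus-Newton argument that the paper itself uses in Steps 3 and 4 of the proof of Theorem \ref{hciso} (compare your identity $\tfrac{d}{dr}\tr(S_r^m)=m\tr(S_r^{m+1})+m\kappa\tr(S_r^{m-1})$ with the paper's $\derive{\tr(S^j)}{\nu}=j\tr(S^{j+1})+j\tr(S^{j-1})$ obtained from equation \eqref{normal} on $\sphere n$), so your blind proof is fully consistent with the techniques the paper relies on. Your closing remark --- that part (b) fails without constant curvature, because $R_\nu$ is then not a scalar multiple of the identity and can both rotate eigendirections and mix symmetric functions --- is exactly the reason the paper's Theorem \ref{main} is stated in terms of isoparametricity rather than constant principal curvatures.
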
 

As proved by Cartan,  the only compact isoparametric hypersurfaces of Euclidean or hyperbolic space  are geodesic balls; on the other hand, on the sphere the situation is much more interesting, and has generated  deep mathematical research, starting from Cartan himself. After the work of M\"unzner \cite{Mun} we know that isoparametric hypersurfaces of the sphere are given by level sets of restrictions to $\sphere n$ of certain (globally defined) homogeneous polynomials in $\real {n+1}$, called {\it Cartan-M\"unzner  polynomials},  and that the number $g$ of distinct principal curvatures can be only $g=1,2,3,4$ and $6$. Families of isoparametric hypersurfaces have been constructed in each of the above cases: for example, Clifford tori correspond to $g=2$ and, for $g=4$, there exist examples with non-homogeneous boundary. 

\smallskip

Let us briefly explain why a spherical domain $\Omega$ bounded by a connected isoparametric boundary $\Sigma$ has the constant flow property. From the general theory one knows that $\Omega$ is a smooth tube of constant radius around a smooth (minimal) submanifold $N$ of codimension at least two (the focal variety of $\Sigma$). The crucial fact is that the equidistants from $N$ (hence, also the equidistants from $\bd\Omega=\Sigma$) all have constant mean curvature. Now, if one defines radial functions as those functions which are constant on the equidistants, then one can verify that the Laplacian of $\Omega$ takes radial functions to radial functions. In turn, this implies that the solution of the heat equation with radial initial data (in particular, our function $u_t$) will stay radial at all times: as a consequence, its normal derivative $\bd u_t/\bd\nu$ (the heat flow)  will be constant on the boundary for all times. Then, any such domain has the constant flow property. 

\smallskip

The approach followed in \cite{Shk}, Section 5.3 to prove this fact is to work directly with definition \eqref{iso} and to use the Fourier series representation of  $u(t,x)$ (see equation (5.8)).

\smallskip

We conclude the section by asking whether the existence of a solution to the Serrin problem, or to the other overdetermined problems examined above, in spaces different from the Euclidean space, the hyperbolic space or the hemisphere (where the answer is known) would imply some kind of isoparametric property of the boundary. Also, is it true that any domain admitting a solution to the Serrin problem must also have the constant flow property ? This would be a converse to Theorem \ref{implies}. There is no immediate reason to have a positive answer; however we don't have, at the moment, any specific counterexample.

%%%%%%%%%%%%%%%%%%%%%%%%%%%%%%%%%%%%%%%%%%%%%%%

\subsection{A general theorem on Riemannian manifolds} \label{gen} Theorem \ref{main} will follow from a more general result: Theorem \ref{general} below.  Let then $\Omega$ be a compact domain with smooth boundary in a Riemannian manifold $M$ and let $\rho:\Omega\to\reals$ now denote the distance function to the boundary of $\Omega$:
$
\rho(x)={\rm dist}(x,\bd\Omega).
$
As the boundary is smooth, there exist a small $\epsilon>0$ such that the function $\rho$ will be smooth in the $\epsilon$-tubular neighborhood $U$ of $\bd\Omega$:
\begin{equation}\label{collar}
U=\{x\in\Omega: 0\leq\rho(x)< \epsilon\}
\end{equation}
(precisely, when $\epsilon$ is small enough so that  $U$ does not meet the cut-locus of the normal exponential map). It is also well-known (and easy to verify) that, at each point $x\in U$ at distance $\rho(x)=r$ to the boundary, the level set $\rho^{-1}(r)$ is smooth and one has:
$$
\Delta\rho(x)=\text{trace of the second fundamental form of the equidistant $\rho^{-1}(r)$.}
$$
In other words, $\Delta\rho$ measures the mean curvature of the interior parallels (equidistants). Let us write, for simplicity:
$$
\eta\doteq\Delta\rho
$$
thus obtaining a smooth function on $U$. Note that the vector field $\nu=\nabla\rho$ is smooth on $U$ and is everywhere normal to the level sets $\rho^{-1}(r)$; when restricted on $\bd\Omega$, it will give the inner unit normal field. 

\begin{thm}\label{general} Let $\Omega$ be domain with smooth boundary in a Riemannian manifold $M$. Assume that $\Omega$ has the constant flow property. Then
$
\deriven{k}{\eta}{\nu}
$
is constant on $\bd\Omega$
for all $k=0,1,2,\dots$.
\end{thm}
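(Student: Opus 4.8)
The plan is to deduce Theorem \ref{general} from the complete small-time asymptotic expansion of the heat flow (obtained in \cite{Savo1} and \cite{Savo2}), together with the uniqueness of asymptotic series and an induction on the order of normal differentiation. The first ingredient is the precise form of that expansion: for each $y\in\bd\Omega$ one has, as $t\to0^+$,
\[
\derive{u}{\nu}(t,y)\ \sim\ \frac{1}{\sqrt{\pi t}}\sum_{k\geq 0}a_k(y)\,t^{k/2},
\]
where $a_0$ is a universal constant and, for each $k\geq 1$, the coefficient $a_k$ is a \emph{universal} polynomial --- with numerical coefficients independent of $\Omega$ and of $M$ --- in the restrictions to $\bd\Omega$ of $\eta,\derive{\eta}{\nu},\dots,\deriven{k-1}{\eta}{\nu}$, and in no other geometric invariant of $\bd\Omega$ or of the ambient metric. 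The decisive structural point (whose precise index bookkeeping is taken from \cite{Savo1}, \cite{Savo2}) is that $a_k$ is \emph{affine} in its highest-order argument: there are a nonzero universal constant $c_k$ and a universal polynomial $Q_k$ in $\eta,\derive{\eta}{\nu},\dots,\deriven{k-2}{\eta}{\nu}$ such that, on $\bd\Omega$,
\[
a_k\ =\ c_k\,\deriven{k-1}{\eta}{\nu}\ +\ Q_k.
\]

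Granting this, the argument proceeds in two short steps. First, the constant flow property removes the dependence on $y$: for each fixed $t>0$ the function $y\mapsto\derive{u}{\nu}(t,y)$ is constant on $\bd\Omega$, and since the coefficients $a_k(y)$ of the expansion are recovered from $\derive{u}{\nu}(t,y)$ by successive passages to the limit as $t\to0^+$ --- an asymptotic expansion in a fixed power scale being unique --- each $a_k$ is a constant function on $\bd\Omega$. Second, one shows by induction on $m\geq 0$ that $\deriven{m}{\eta}{\nu}$ is constant on $\bd\Omega$: applying the last displayed identity with $k=m+1$, the left-hand side $a_{m+1}$ is constant on $\bd\Omega$ by the first step, the inductive hypothesis makes $\eta,\derive{\eta}{\nu},\dots,\deriven{m-1}{\eta}{\nu}$ --- hence $Q_{m+1}$ --- constant on $\bd\Omega$, and since $c_{m+1}\neq 0$ we conclude that $\deriven{m}{\eta}{\nu}$ is constant on $\bd\Omega$, which closes the induction.

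The main obstacle --- everything else being routine --- is the structural input recalled above: that the heat-flow coefficients are polynomials in $\eta$ and its iterated normal derivatives \emph{alone}, with no independent occurrence of $|S|^2$, ${\rm Ric}(\nu,\nu)$ or tangential curvature invariants, and that the $k$-th coefficient contains $\deriven{k-1}{\eta}{\nu}$ linearly with a nonzero coefficient. This is exactly what the parametrix construction of \cite{Savo1}, \cite{Savo2} yields --- the heat equation near $\bd\Omega$ being modelled on its one-dimensional radial reduction in the normal variable $\rho$, whose only coefficient function is $\eta$, with the corrections organised by the number of $\nu$-derivatives falling on $\eta$. (Should $c_k$ vanish for some $k$, one would instead keep track of the actual highest normal derivative of $\eta$ occurring in $a_k$: as long as the resulting orders still exhaust $\{0,1,2,\dots\}$ as $k$ varies, the same induction applies.)
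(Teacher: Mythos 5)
Your overall plan (read off the boundary coefficients of the small-time heat-flow expansion, observe they must all be constant on $\bd\Omega$, and then peel off $\deriven{k-1}{\eta}{\nu}$ by induction using a nonzero leading coefficient) is indeed the strategy of the paper. However, the structural claim you single out as "the main obstacle" and then assert is not only unjustified but actually \emph{false as stated}, and repairing it is precisely where the real work lies.

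You claim that $a_k$ is a universal polynomial in the \emph{pointwise} values of $\eta,\derive{\eta}{\nu},\dots,\deriven{k-1}{\eta}{\nu}$ on $\bd\Omega$, "and in no other geometric invariant." This fails already at low order. The heat-content/heat-flow operators $D_k$ live in the noncommutative algebra generated by $N$ and the full Laplacian $\Delta$, and the latter splits as $\Delta=\Delta_R+\Delta_T$ where $\Delta_T$ is the Laplacian along the equidistants. Consequently the coefficients contain genuine tangential invariants of $\eta$: for instance the paper's own computation gives
\[
B_4=\tfrac{1}{16}\Bigl(-\deriven{2}{\eta}{\nu}+\eta\derive{\eta}{\nu}+\Delta_T\eta\Bigr),
\]
and $\Delta_T\eta|_{\bd\Omega}$ is an intrinsic second-order tangential derivative that is not a function of the pointwise normal jet of $\eta$. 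So your unconditional affine decomposition $a_k=c_k\deriven{k-1}{\eta}{\nu}+Q_k(\eta,\dots,\deriven{k-2}{\eta}{\nu})$ does not hold; there is an additional tangential remainder that your induction has no a priori control over. What the paper actually proves (its Proposition \ref{level}) is the \emph{conditional} version: \emph{if} $\eta$ and its normal derivatives up to order $k$ already restrict to constants on $\bd\Omega$ ($\eta$ has "level" $\geq k$), then every tangential piece $A_T\eta$ vanishes on $\bd\Omega$ (Lemmas \ref{levelk}--\ref{obvious}), and only then does one get
\[
B_{k+3}|_{\bd\Omega}=-w(\bar D_{k+3})\,\deriven{k+1}{\eta}{\nu}\Bigr|_{\bd\Omega}+\text{const}.
\]
This conditional statement is exactly what an induction needs, but it requires the level calculus that your argument omits; you cannot simply declare the tangential invariants absent.

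A secondary issue is that you treat the nonvanishing $c_k\neq 0$ as an incidental bookkeeping point (with a fallback "should $c_k$ vanish..."). In the paper this is Theorem \ref{combinatorial}, whose proof occupies Sections 6--8 plus the Appendix and reduces to nontrivial Hankel-determinant identities. It is a central quantitative input, not something that can be waved away by reorganising the induction.
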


The proof will be done in  Section 3.  Note that the result holds in any smooth (not necessarily analytic) Riemannian manifold. We can now show how Theorem \ref{main} follows easily from Theorem \ref{general}.

%%%%%%%%%%%%%%%%%%%%%%%%%%%%%%

\subsection{Proof of Theorem \ref{main}}\label{pmain}

By assumption, the ambient manifold  $M$ is analytic. 
By the regularity results in \cite{K-N} and  \cite{Wil2} we know that, if  a domain admits a solution to the Schiffer problem \eqref{schiffer} for some eigenvalue $\lambda$, then its boundary must be analytic. As proved in Theorem \ref{implies},  any domain having the constant flow property 
admits a solution to the Schiffer problem (for infinitely many eigenvalues). Then, $\bd\Omega$ is analytic.

Consider the tubular neighborhood $U=\{\rho<\epsilon\}$ of  $\bd\Omega$ as in \eqref{collar}.
We will show that the nearby equidistants $\rho^{-1}(r)$ have constant mean curvature for all $r <\epsilon$, or, equivalently, that $\eta$ is constant on $\rho^{-1}(r)$ for $r<\epsilon$. Consider the diffeomorphism:
$$
\Phi: [0,\epsilon)\times\bd\Omega \to U
$$
defined by $\Phi(r,y)=\exp_{y}(r\nu(y))$, where $\nu(y)$ is the inner unit normal at $y\in\bd\Omega$. The pair $(r,y)$ gives rise to the {\it normal coordinates} of a point of $U$. As both $M$ and $\bd\Omega$ are analytic, the normal exponential map, hence also the map $\Phi$, must be analytic. Now, the composition of $\Phi^{-1}$ with the projection onto $[0,\epsilon)$, which is precisely the distance function $\rho$ on $U$, is also analytic, hence its Laplacian (the function $\eta$), is analytic. Let $x\in U$ be a point at distance $r$ to the boundary, and let $y$ be the foot of the minimizing geodesic segment from $x$ to $\bd\Omega$, so that $(r,y)$ are the normal coordinates of $x$.  As $\eta$ is analytic, $\eta(x)=\eta(r,y)$ equals the sum 
of its $r$-Taylor series based at $(0,y)$. Hence:
$$
\eta(x)=\sum_{k=0}^{\infty}\dfrac{1}{k!}\deriven{k}{\eta}{\nu}(y)r^k.
$$
By Theorem \ref{general}, one has $\deriven{k}{\eta}{\nu}(y)=a_k$ for all $k$, where $a_k$ is independent on $y$; this shows that the right-hand side of the previous equation does not depend on $y$, but only on $r=\rho(x)$:  hence $\eta$ is constant on 
$\rho^{-1}(r)$ and $\bd\Omega$ is isoparametric. 

%%%%%%%%

\subsection{Plan of the paper and scheme of the proof}\label{pop}

\nero In Section \ref{rop} we first show that the constant flow property  is equivalent to a vanishing condition for the heat content function with (zero mean) boundary data. We then verify  that a domain with the constant flow property supports a solution to the Serrin problem (Theorem \ref{solserrin}), and also a solution to the Schiffer problem for infinitely many eigenvalues (Theorem \ref{solschiffer}).

\nero In Section \ref{ahf} we start the proof of Theorem \ref{general}, by recalling the main results on the asymptotic expansion of the heat content and the heat flow proved in \cite{Savo1} and \cite{Savo2}. 
These results will be needed for the proof of Theorem \ref{general}. In fact, it turns out that the heat flow at a point $y\in\bd\Omega$ admits a complete asymptotic series, as $t\to 0$, of the following type:
\begin{equation}\label{asflow}
\derive{u}{\nu}(t,y)\sim\dfrac{1}{\sqrt\pi}\cdot\dfrac{1}{\sqrt t}+\sum_{k=0}^{\infty}(1+\frac k2) B_{k+2}(y)\cdot t^{k/2}
\end{equation}
for a sequence of smooth invariants $B_k(y)\in C^{\infty}(\bd\Omega)$. Clearly, if $\Omega$ has the constant flow property then all these invariants must be constant functions on $\bd\Omega$:  from this information, after some work, one can eventually derive that the normal derivative, of arbitrary order, of the function $\eta$ is constant on the boundary.

\smallskip

More in detail, it follows from the results in \cite{Savo2}  that the above invariants can be written as $B_k=-\bar D_k\eta$; here $\bar D_k$ is a certain differential operator belonging to the algebra generated by the Laplacian $\Delta$ of $\Omega$ and by the operator $N$ acting on $\phi\in C^{\infty}(U)$  as follows:
$$
N\phi=2\scal{\nabla\phi}{\nabla\rho}-\phi\Delta\rho=2\derive{\phi}{\nu}-\eta\phi
$$
where $\rho$ is the distance function to $\bd\Omega$ and where we have set $\nu=\nabla\rho$. The operator $\bar D_k$ can be computed using an explicit recursive scheme defining  some related operators $D_k$: this scheme was proved in \cite{Savo1} and will be recalled in Theorem \ref{savoone}.  The proof of Theorem \ref{general} will be  by induction on the order $k$ of the normal derivative of $\eta$, and in Section \ref{two}  we illustrate the general strategy by proving Theorem \ref{general} for $k\leq 2$.

\smallskip

\nero In Section \ref{sgeneral} we prove Theorem \ref{main} and Theorem \ref{hciso}. We first prove Proposition \ref {level}, in which we relate the invariants $B_k$ in \eqref{asflow} with the normal derivatives of $\eta$. More precisely we prove that,  if the function $\eta$ restricts to a constant function on $\bd\Omega$ together with all of its normal derivatives up and including the order $k$ 
then, for all $y\in\bd\Omega$ one has:
$$
B_{k+3}(y)=-w(\bar D_{k+3})\deriven{k+1}{\eta}{\nu}(y)+b_k
$$
where $b_k$ is a constant which does not depend on $y\in\bd\Omega$.  Here $w(\bar D_{k+3})$   (the so-called {\it weight} of the operator $\bar D_{k+3}$) is the coefficient of the  highest order normal derivative in $\bar D_{k+3}$. Assume that $w(\bar D_{k+3})$ is non-zero for all $k$. Then, an easy inductive argument (Proposition \ref{inductive}) shows that, if $B_2,\dots,B_{m}$ are constant on $\bd\Omega$ for some $m\geq 2$, then the functions $\eta,\derive{\eta}{\nu},\dots, \deriven{m-2}{\eta}{\nu}$ will also be  constant on $\bd\Omega$. The proof of Theorem \ref{general} now follows immediately, while the proof of Theorem \ref{hciso} requires an  additional argument involving the Newton identities (see Section \ref{phciso}). 

In conclusion, all the results of the paper will be completely proved once we show that $w(\bar D_{k+3})$ is non-zero for all $k$; thus in Theorem \ref{combinatorial} of Section \ref{sgeneral} we state the main combinatorial result, giving the explicit expression of the weight of $\bar D_k$ for all $k$. 

\nero In the remaining sections we prove Theorem \ref{combinatorial}, by applying the recursive scheme which define the operators $\bar D_k$. For convenience, we have divided the proof in several sections.
The proof is progressively reduced to a set of combinatorial identities for the so-called Hankel transforms associated to a certain numerical sequence (see Lemma \ref{mainlemma}); these identities can be derived from the work of Tamm in \cite{Tamm}, and  we give an explicit account of that  in the Appendix (Section \ref{appendix}).

%%%%%%%%%%%%%%%%%%%%%%%%%%%%%%%%%%%%%%%%%%%%%%%%%%%%%

\section{Equivalent condition and other overdetermined problems} \label{rop}

Given a smooth function $\phi(x)$ on $\Omega$, we let $\phi_t(x)\doteq\phi(t,x)$ be the solution of the heat equation with initial data $\phi_0(x)=\phi(x)$ and Dirichlet boundary conditions:

\begin{equation}\label{hefi}
\threesystem
{\Delta \phi_t+\derive {\phi_t}t=0}
{\phi_0=\phi\quad\text{on}\quad\Omega}
{\phi_t=0 \quad\text{on $\bd\Omega$, for all $t>0$}.}
\end{equation}
The total heat inside the domain at time $t$ is measured by the {\it heat content} function, defined for $t\geq 0$ by:
$$
H_{\phi}(t)=\int_{\Omega}\phi_t.
$$
$H_{\phi}(t)$ is smooth for $t>0$ but only continuous at $t=0$; it will also be called the {\it heat content with initial data $\phi$}. In what follows, we consider the following spaces:
$$
C^{\infty}_0(\bd\Omega)=\Big\{f\in C^{\infty}(\bd\Omega): \int_{\bd\Omega}f=0\Big\},
\quad {\cal H}_0(\Omega)=\Big\{\phi\in C^{\infty}(\Omega): \Delta\phi=0, \int_{\bd\Omega}\phi=0\Big\}.
$$

Recall that, given $f\in C^{\infty}(\bd\Omega)$ we denoted by $\tilde f_t$ the solution of the heat equation with zero initial conditions and boundary conditions prescribed by $f$: see \eqref{pbc}. It is clear that, if $\phi$ is the harmonic extension of $f$ to $\Omega$ (that is, if $\phi$ satisfies $\Delta\phi=0$ on $\Omega$ and $\phi=f$ on $\bd\Omega$) then $\tilde f_t$ can be written:
$$
\tilde f_t=\phi-\phi_t
$$
for all $t>0$. Integrating on $\Omega$, we see  that the heat content with boundary data $f$, that is, the function  
$\tilde H_f(t)=\int_{\Omega}\tilde f_t$, can be written:
\begin{equation}\label{relationhc}
\tilde H_f(t)=\int_{\Omega}\phi-H_{\phi}(t),
\end{equation}
where $\phi$ is the harmonic extension of $f$ to $\Omega$.

\smallskip

We will often use the fact that the only functions   on $\bd\Omega$ which are $L^2-$orthogonal to $C^{\infty}_0(\bd\Omega)$ are the constants.

\begin{thm}\label{zeroflow}
A domain $\Omega$ has the constant flow property  if and only if:

\item (a)  the heat content with initial data in ${\cal H}_0(\Omega)$ is identically zero at all times; that is, one has
$
H_{\phi}(t)=0
$
for all $\phi\in {\cal H}_0(\Omega)$ and for all $t\geq 0$.

\item (b) the heat content with boundary data in $C^{\infty}_0(\bd\Omega)$ is identically zero at all times; that is, one has
$
\tilde H_f(t)=0
$
for all $f\in C^{\infty}_0(\bd\Omega)$ and for all $t\geq 0$.
\end{thm}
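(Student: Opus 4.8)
The plan is to establish the chain of equivalences: constant flow property $\iff$ (a) $\iff$ (b). The key analytic tool is the duality between the heat flow $\bd u_t/\bd\nu$ on $\bd\Omega$ and the heat content functions, which comes from Green's formula. Specifically, for the function $u_t$ solving \eqref{temp} and any harmonic function $\phi$ on $\Omega$, I would compute $\frac{d}{dt}\int_\Omega \phi\, u_t$. Using the heat equation $\Delta u_t = -\bd u_t/\bd t$ and Green's formula, together with $\Delta\phi = 0$, $u_t = 0$ on $\bd\Omega$, and $\phi = f$ on $\bd\Omega$, this derivative should reduce to a boundary integral of the form $-\int_{\bd\Omega} f \cdot \derive{u_t}{\nu}$ (up to sign conventions on the inward normal). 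Integrating in $t$ from $0$ to $\infty$, and using that $u_t \to 0$ as $t\to\infty$ while $u_0 = 1$, this yields an identity relating $\int_\Omega \phi$ to $\int_0^\infty \left(\int_{\bd\Omega} f\, \derive{u_t}{\nu}\right) dt$. More usefully, differentiating at the level of the heat content directly: I expect $H_\phi'(t) = -\int_{\bd\Omega} f\,\derive{u_t}{\nu}$ after the appropriate manipulation, or an analogous pointwise-in-$t$ relation that does not require integrating over all time.

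For the equivalence (a) $\iff$ (b), I would use the identity \eqref{relationhc}, $\tilde H_f(t) = \int_\Omega \phi - H_\phi(t)$, where $\phi$ is the harmonic extension of $f$. When $f \in C^\infty_0(\bd\Omega)$, the corresponding harmonic extension $\phi$ lies in ${\cal H}_0(\Omega)$, and conversely every $\phi \in {\cal H}_0(\Omega)$ is the harmonic extension of its boundary trace $f = \phi|_{\bd\Omega} \in C^\infty_0(\bd\Omega)$; this gives a bijection between the two function spaces. Since $\int_\Omega \phi$ is a constant (independent of $t$) and $\lim_{t\to 0} H_\phi(t) = \int_\Omega \phi_0 = \int_\Omega \phi$, the condition $H_\phi(t) = 0$ for all $t \geq 0$ forces $\int_\Omega \phi = 0$ first (by taking $t \to 0$), and then \eqref{relationhc} gives $\tilde H_f(t) = 0$ for all $t$; conversely $\tilde H_f \equiv 0$ gives $H_\phi(t) = \int_\Omega \phi$ for all $t$, but again $H_\phi(0) = \int_\Omega \phi$ and... here one needs $H_\phi(t) \to 0$ as $t \to \infty$ (heat dissipates) to conclude $\int_\Omega \phi = 0$ and hence $H_\phi \equiv 0$. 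So (a) and (b) are equivalent, each being equivalent to the pair of conditions "$\int_\Omega \phi = 0$ for every harmonic $\phi$ with zero boundary mean" together with "$H_\phi \equiv 0$", but actually the zero-mean hypothesis is built into the function spaces, so it collapses cleanly.

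For the equivalence with the constant flow property: the heat flow $\derive{u}{\nu}(t,\cdot)$ is constant on $\bd\Omega$ precisely when it is $L^2(\bd\Omega)$-orthogonal to $C^\infty_0(\bd\Omega)$, using the stated fact that the only functions orthogonal to $C^\infty_0(\bd\Omega)$ are constants. By the Green's-formula identity from the first paragraph, $\int_{\bd\Omega} f \,\derive{u_t}{\nu} = -H_\phi'(t)$ (with $\phi$ the harmonic extension of $f$). If $\Omega$ has the constant flow property, then for $f \in C^\infty_0(\bd\Omega)$ this integral vanishes for all $t$, so $H_\phi'(t) \equiv 0$, hence $H_\phi$ is constant in $t$; combined with $H_\phi(t) \to 0$ as $t\to\infty$ (or $H_\phi(0)=\int_\Omega\phi$ and the zero-mean normalization), we get $H_\phi \equiv 0$, which is (a). Conversely, if (a) holds then $H_\phi' \equiv 0$ for all $\phi\in{\cal H}_0$, so $\int_{\bd\Omega} f\,\derive{u_t}{\nu} = 0$ for all $f \in C^\infty_0(\bd\Omega)$ and all $t$, whence $\derive{u_t}{\nu}$ is constant on $\bd\Omega$ for each $t$.

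The main obstacle I anticipate is getting the Green's-formula bookkeeping exactly right — in particular the sign coming from the inward-pointing normal $\nu$, and justifying the differentiation under the integral sign and the boundary-integral manipulations rigorously (the function $u_t$ is smooth up to the boundary for $t > 0$, so this should be routine but needs care near $t = 0$). A secondary point requiring a clean argument is the limiting behavior $H_\phi(t) \to 0$ as $t \to \infty$, needed to upgrade "$H_\phi$ constant" to "$H_\phi \equiv 0$"; this follows from the spectral decomposition of $\phi_t$ in Dirichlet eigenfunctions, all of which decay, but should be stated. Everything else is essentially linear algebra on the function spaces and the bijection between $C^\infty_0(\bd\Omega)$ and ${\cal H}_0(\Omega)$ via harmonic extension.
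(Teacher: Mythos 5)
Your proposal is correct and follows essentially the same route as the paper: the identity $H'_\phi(t)=-\int_{\bd\Omega}f\,\derive{u_t}{\nu}$ via Green's formula, the bijection between ${\cal H}_0(\Omega)$ and $C^\infty_0(\bd\Omega)$ by harmonic extension, the use of $H_\phi(t)\to 0$ as $t\to\infty$ (spectral decay) to upgrade ``$H_\phi$ constant'' to ``$H_\phi\equiv 0$'', and the $L^2$-orthogonality characterization of constancy. The one step you leave implicit is the identity $H_\phi(t)=\int_\Omega\phi\,u_t$, which the paper proves from the symmetry $k(t,x,y)=k(t,y,x)$ of the Dirichlet heat kernel; with that established, your computation of $\frac{d}{dt}\int_\Omega\phi\,u_t$ is exactly what the paper does.
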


\begin{proof} (a) We first observe that we have the identity, valid for all $t>0$:
\begin{equation}\label{phiut}
H_{\phi}(t)=\int_{\Omega}\phi u_t,
\end{equation}
where $u_t$ is, as usual,  the solution of our original equation \eqref{temp}.
In fact, if $k(t,x,y)$ denotes the heat kernel of $\Omega$ with Dirichlet boundary conditions, one has $\phi_t(x)=\int_{\Omega}k(t,x,y)\phi(y)dy$ hence:
$$
\int_{\Omega}\phi_t(x)dx=\int_{\Omega}\Big(\int_{\Omega}k(t,x,y)\phi(y)dy\Big)\,dx=
\int_{\Omega}\phi(y)\Big(\int_{\Omega}k(t,x,y)dx\Big)\,dy=\int_{\Omega}\phi(y)u_t(y)\,dy.
$$
If $\phi$ is harmonic on $\Omega$ we obtain, from \eqref{phiut}, the Green formula and the fact that $u_t$ vanishes on the boundary:
\begin{equation}\label{flow}
H'_{\phi}(t)=-\int_{\Omega}\phi\Delta  u_t=-\int_{\bd\Omega}\phi\derive{u_t}{\nu}.
\end{equation}
Now assume that $\phi\in {\cal H}_0(\Omega)$: then $\phi$ is harmonic  and $\int_{\bd\Omega}\phi=0$. If $\Omega$ has the constant flow property then $\derive{u_t}{\nu}$ is constant on $\bd\Omega$, hence it can be taken out of the integral  \eqref{flow} so that $H'_{\phi}(t)=0$ for all $t>0$. Therefore, for all $t$:
$$
H_{\phi}(t)=H_{\phi}(0)=\int_{\Omega}\phi.
$$
Now, since $\phi_t$ converges uniformly to zero as $t\to\infty$,   so does $H_{\phi}(t)$, which in turn implies that $\int_{\Omega}\phi=0$: hence $H_{\phi}(t)=0$ for all $t$.

\smallskip

Conversely, assume that $H_{\phi}(t)=0$  for all $t\geq 0$ and $\phi\in {\cal H}_0(\Omega)$. It is enough to show that
$$
\int_{\bd\Omega}f\derive{u_t}{\nu}=0
$$
for all $f\in C^{\infty}_0(\bd\Omega)$ and $t>0$, because then $\derive{u_t}{\nu}$ must be constant on $\bd\Omega$. Fix $f\in C^{\infty}_0(\bd\Omega)$ and consider the unique harmonic function $\phi$ which extends $f$ to $\Omega$. By assumption $\phi\in {\cal H}_0(\Omega)$ hence $H_{\phi}(t)=0$ for all $t$. 
But then $H'_{\phi}(t)=0$ for all $t$ and by \eqref{flow} we see
$$
0=-\int_{\bd\Omega}\phi\derive{u_t}{\nu}=-\int_{\bd\Omega}f\derive{u_t}{\nu},
$$
hence the assertion.

\smallskip

{\it Proof of (b)}.  It follows immediately from (a) and \eqref{relationhc}. It can also be proved by observing that, if $f\in C^{\infty}_0(\bd\Omega)$ and if $\phi$ is its harmonic extension to $\Omega$, then, by \eqref{relationhc} and \eqref{flow}:
\begin{equation}\label{tildehprime}
\tilde H'_{f}(t)=\int_{\bd\Omega} f\derive{u_t}{\nu}.
\end{equation}
One can then argue as before.

\end{proof} 

%%%%%%%%%%%%%%%%%%
 We can now show the following fact. 

\begin{thm} \label{solserrin} Any Riemannian domain $\Omega$ having the constant flow property supports a solution to the Serrin problem \eqref{serrin}.
\end{thm}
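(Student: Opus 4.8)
The plan is to produce the solution of the Serrin problem \eqref{serrin} directly from the heat-flow data. The natural candidate is the torsion function $v$, i.e. the solution of $\Delta v = 1$ on $\Omega$ with $v=0$ on $\bd\Omega$; such $v$ always exists and is unique, so the only thing to check is the overdetermined condition $\derive{v}{\nu}=c$ on $\bd\Omega$ for some constant $c$. To connect $v$ with the heat flow, I would integrate the function $u_t$ of \eqref{temp} in time: set $v(x) = \int_0^\infty \big(1-u(t,x)\big)\,dt$. Since $u_t\to 0$ uniformly (the domain is compact), this integral converges; differentiating under the integral sign and using the heat equation $\Delta u_t = -\bd u_t/\bd t$, one gets $\Delta v(x) = \int_0^\infty \Delta u(t,x)\,dt = -\int_0^\infty \bd_t u(t,x)\,dt = u(0,x)-\lim_{t\to\infty}u(t,x)=1$ on $\Omega$, and $v=0$ on $\bd\Omega$ since $u_t=0$ there for all $t>0$; hence this $v$ is exactly the torsion function.

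Next I would compute the normal derivative. For $y\in\bd\Omega$,
\[
\derive{v}{\nu}(y) = \int_0^\infty \derive{}{\nu}\big(1-u(t,y)\big)\,dt = -\int_0^\infty \derive{u}{\nu}(t,y)\,dt.
\]
(One should note the minus sign, and also that the integrand is positive since $\nu$ points inward and $u$ decreases from $1$ at the boundary, so the integral is a well-defined nonnegative — in fact positive — quantity; convergence at $t\to 0$ follows from the $t^{-1/2}$ asymptotics in \eqref{asflow}, and at $t\to\infty$ from exponential decay.) Now the constant flow property enters: by Definition \ref{chf}, for each fixed $t>0$ the function $y\mapsto \derive{u}{\nu}(t,y)$ is constant on $\bd\Omega$, say equal to $\phi(t)$. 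Therefore $\derive{v}{\nu}(y) = -\int_0^\infty \phi(t)\,dt$, which is manifestly independent of $y\in\bd\Omega$. Setting $c = -\int_0^\infty \phi(t)\,dt$ (a negative constant) gives $\derive{v}{\nu}\equiv c$ on $\bd\Omega$, so $v$ solves \eqref{serrin}.

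I expect no serious obstacle here; the proof is essentially a bookkeeping exercise. The only points requiring a word of care are the convergence of the time integral defining $v$ (uniform exponential decay of $u_t$ as $t\to\infty$ by the spectral expansion \eqref{hk}, integrability near $t=0$ from the leading $\frac{1}{\sqrt\pi}t^{-1/2}$ term of \eqref{asflow}) and the justification of differentiating under the integral sign, both on $\Omega$ to get $\Delta v=1$ and on $\bd\Omega$ to get the normal derivative — these follow from standard parabolic interior and boundary estimates on the heat kernel, together with its uniform decay. Alternatively, one could bypass these estimates by invoking the equivalent formulation in Theorem \ref{zeroflow}: harmonicity of test functions against $u_t$, as in \eqref{flow}, shows that $\derive{u_t}{\nu}$ is $L^2$-orthogonal on $\bd\Omega$ to $C^\infty_0(\bd\Omega)$ and hence constant, which is the same input used above; but the direct time-integration argument is the cleanest and is the one I would write up.
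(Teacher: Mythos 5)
Your strategy---representing the torsion function as a time integral of the heat solution $u_t$---is a genuinely different and attractive route from the paper's proof, which instead invokes Theorem \ref{zeroflow}(a), deduces $\int_\Omega\phi=0$ for harmonic $\phi$ with $\int_{\bd\Omega}\phi=0$, and concludes via Green's identity $\int_\Omega\phi=\int_\Omega\phi\,\Delta v=\int_{\bd\Omega}f\,\derive{v}{\nu}$. Your approach, once correct, bypasses the heat-content characterization and uses only Definition \ref{chf}, which is cleaner.

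However, the formula you wrote down is wrong, and the error is not merely notational. Since $u_t\to 0$ uniformly as $t\to\infty$, the integrand $1-u(t,x)$ tends to $1$, so $\int_0^\infty\bigl(1-u(t,x)\bigr)\,dt$ \emph{diverges}---your own convergence remark contradicts the formula. Moreover $1-u(t,y)=1$ on $\bd\Omega$ for $t>0$, so this expression cannot produce zero Dirichlet data. Finally, even formally, $\Delta\int_0^\infty(1-u_t)\,dt=-\int_0^\infty\Delta u_t\,dt=\int_0^\infty\derive{u_t}{t}\,dt=u_\infty-u_0=-1$, not $+1$ (you dropped a sign when you wrote $\Delta v=\int\Delta u\,dt$ after defining $v=\int(1-u)\,dt$). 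The representation you want is simply
\[
v(x)=\int_0^\infty u(t,x)\,dt,
\]
which converges (the integrand is bounded by $1$ and decays exponentially), vanishes on $\bd\Omega$ because $u_t=0$ there, and satisfies $\Delta v=\int_0^\infty\Delta u_t\,dt=-\int_0^\infty\derive{u_t}{t}\,dt=u_0-u_\infty=1$. Then $\derive{v}{\nu}(y)=\int_0^\infty\derive{u}{\nu}(t,y)\,dt$ with \emph{no} minus sign, and the constant $c$ you obtain is positive, consistent with $\nu$ pointing inward and $v>0$ in the interior. With this correction, the rest of your argument---integrability near $t=0$ from the $t^{-1/2}$ leading term of \eqref{asflow}, exponential decay at infinity, and constancy in $y$ from Definition \ref{chf}---goes through and yields a valid proof.
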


\begin{proof} Assume that $\Omega$ has the constant flow property  and let $v$ be the unique function such that  $\Delta v=1$  on $\Omega$ and $v=0$ on the boundary. We have to show that its normal derivative is constant on $\bd\Omega$. 
%hence  it is enough to show that, for all $f\in C^{\infty}_0(\bd\Omega)$, one has 
%$$
%\int_{\bd\Omega}f\derive {v}{\nu}=0.
%$$
Fix $f\in C^{\infty}_0(\bd\Omega)$ and extend $f$ to a harmonic function $\phi$ on $\Omega$. Then, by Theorem \ref{zeroflow} we know that the  heat content with initial data $\phi$ is identically zero: $\int_{\Omega}\phi_t=0$ for all $t\geq 0$.  Taking $t=0$ we see that $\phi$ integrates to zero on $\Omega$. Therefore, by the Green formula:
$$
0=\int_{\Omega}\phi=\int_{\Omega}\phi\Delta v=\int_{\bd\Omega}\phi\derive {v}{\nu}=\int_{\bd\Omega}f\derive {v}{\nu}.
$$ 
As $f\in C^{\infty}_0(\bd\Omega)$ was arbitrary, one must have $\derive{v}{\nu}={\rm const}$. 
\end{proof}

%%%%%%%%%%%%%%%%%%

\begin{thm}\label{solschiffer} Let $\Omega$ be a domain with the constant flow property. Then the overdetermined problem
$$
\twosystem
{\Delta u=\lambda u\quad\text{on}\quad\Omega}
{u=0, \, \derive{u}{\nu}=c\quad\text{on}\quad\bd\Omega}
$$
admits a solution for an infinite sequence $\{\lambda_j^+\}$ of eigenvalues, in particular, for $\lambda=\lambda_1(\Omega)$. Hence, any domain with the constant flow property is also extremal. 
\end{thm}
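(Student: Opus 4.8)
The plan is to produce the required eigenfunctions directly from the spectral expansion of the function $u_t$ of \eqref{temp}. Let $\{\phi_j\}_{j\ge 1}$ be an $L^2(\Omega)$-orthonormal basis of Dirichlet eigenfunctions, $\Delta\phi_j=\lambda_j\phi_j$, $\phi_j|_{\bd\Omega}=0$, and for an eigenvalue $\lambda$ let $P_\lambda$ be the orthogonal projection of $L^2(\Omega)$ onto the $\lambda$-eigenspace. I would set $w_\lambda\doteq P_\lambda(1)$ and $\Lambda^+\doteq\{\lambda:\ w_\lambda\ne 0\}$; these are the eigenvalues $\lambda_j^+$ of the statement. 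Since $u_0=1$, the spectral theorem gives $u_t=\sum_{\lambda\in\Lambda^+}e^{-\lambda t}w_\lambda$, the series converging in $C^\infty(\bar\Omega)$ for every $t>0$ by the smoothing property of the Dirichlet heat semigroup. Two preliminary facts: $\lambda_1(\Omega)\in\Lambda^+$, because $\langle 1,\phi_1\rangle=\int_\Omega\phi_1\ne 0$ (as $\phi_1>0$); and $\Lambda^+$ is infinite, for otherwise the expansion would be a finite sum of eigenfunctions and, letting $t\to 0$ and using $u_t\to 1$ in $L^2(\Omega)$, we would get $1=\sum_{\lambda\in\Lambda^+}w_\lambda$, which is impossible since the right-hand side is continuous up to $\bd\Omega$ and vanishes there while the left-hand side equals $1$ on $\bd\Omega$.

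Next I would fix $\lambda\in\Lambda^+$ and check that $w_\lambda$ solves the stated problem. It is a nonzero function with $\Delta w_\lambda=\lambda w_\lambda$ on $\Omega$ and $w_\lambda|_{\bd\Omega}=0$, so the only thing left is that $\derive{w_\lambda}{\nu}$ is constant on $\bd\Omega$. Differentiating the expansion of $u_t$ in the $\nu$-direction and restricting to $\bd\Omega$ gives $\derive{u_t}{\nu}=\sum_{\lambda\in\Lambda^+}e^{-\lambda t}\derive{w_\lambda}{\nu}$ in $C^\infty(\bd\Omega)$, for each $t>0$. By the constant flow property the left-hand side does not depend on the boundary point; hence for all $y,y'\in\bd\Omega$ the Dirichlet series $\sum_{\lambda\in\Lambda^+}e^{-\lambda t}\big(\derive{w_\lambda}{\nu}(y)-\derive{w_\lambda}{\nu}(y')\big)$ vanishes identically in $t>0$, and a uniqueness argument for Dirichlet series forces $\derive{w_\lambda}{\nu}(y)=\derive{w_\lambda}{\nu}(y')$. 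Thus $w_\lambda$ solves the problem for every $\lambda$ in the infinite set $\Lambda^+$, and the choice $\lambda=\lambda_1(\Omega)$ yields a solution of \eqref{extremal}, so $\Omega$ is extremal.

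The step I expect to require the most care is the extraction of coefficients from an identity $\sum_\lambda c_\lambda e^{-\lambda t}\equiv 0$: this rests on the discreteness of the Dirichlet spectrum and on the absolute convergence of the series for large $t$ (which also legitimizes the termwise normal differentiation), after which one peels off the coefficients by multiplying by $e^{\mu t}$ with $\mu$ the smallest exponent present and letting $t\to\infty$, then iterating; everything else is routine. As an alternative that stays closer to the proof of Theorem \ref{solserrin}: given $f\in C^\infty_0(\bd\Omega)$ with harmonic extension $\phi\in{\cal H}_0(\Omega)$, Theorem \ref{zeroflow}(a) expanded in eigenfunctions gives, again by Dirichlet-series uniqueness, $\langle\phi,w_\lambda\rangle=\int_\Omega P_\lambda\phi=0$, whence $\int_{\bd\Omega}f\derive{w_\lambda}{\nu}=\int_\Omega\phi\Delta w_\lambda=\lambda\langle\phi,w_\lambda\rangle=0$ by Green's formula; since only constants are $L^2$-orthogonal to $C^\infty_0(\bd\Omega)$, $\derive{w_\lambda}{\nu}$ is constant.
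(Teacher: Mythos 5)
Your proposal is correct, and your primary argument takes a genuinely different route from the paper's. The paper works at the level of the heat content: it constructs a special orthonormal basis $\{\phi_j^+,\psi_1,\dots\}$ of each eigenspace in which exactly one basis vector has nonzero integral, expands the heat content $H_\psi(t)=\sum_j e^{-\lambda_j^+ t}\int_\Omega\phi_j^+\int_\Omega\psi\phi_j^+$ for $\psi\in{\cal H}_0(\Omega)$, invokes Theorem \ref{zeroflow}(a) to conclude that each coefficient vanishes, and then uses Green's formula to convert $\int_\Omega\psi\phi_j^+=0$ into $\int_{\bd\Omega}\psi\,\bd\phi_j^+/\bd\nu=0$, whence $\bd\phi_j^+/\bd\nu$ is constant. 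Your main argument instead bypasses the heat content entirely: you expand $u_t$ itself in the Dirichlet basis, differentiate termwise in the normal direction, restrict to $\bd\Omega$, and peel off the $t$-coefficients of the resulting Dirichlet series \emph{pointwise on the boundary} to get that each $\bd w_\lambda/\bd\nu$ is constant directly. Your $w_\lambda=P_\lambda(1)$ is just $\big(\int_\Omega\phi_j^+\big)\phi_j^+$ in the paper's notation, so the objects coincide. The tradeoff: the paper's argument stays at the $L^2$ level (its Dirichlet-series manipulation involves only scalar integrals), whereas yours requires the convergence of the eigenfunction expansion of $u_t$ in $C^1(\bar\Omega)$ for fixed $t>0$ — a standard consequence of elliptic regularity and the exponential decay of $e^{-\lambda t}$, which you correctly flag as the point needing care. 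In exchange, you avoid introducing the dual test-function space ${\cal H}_0(\Omega)$ and the density argument showing that $C^\infty_0(\bd\Omega)$-orthogonality forces constancy, which makes the conclusion more transparent. Your alternative argument at the end is essentially the paper's proof verbatim, and it is also correct.
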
 

\begin{proof} Let ${\rm Spec}(\Omega)=\{\lambda_1,\lambda_2,\dots\}$ be the spectrum of $\Omega$ for the Dirichlet boundary conditions. We single out an (infinite) subset  ${\rm Spec}_+(\Omega)\subseteq {\rm Spec}(\Omega)$ , as follows:
$$
{\rm Spec}_+(\Omega)=\Big\{\lambda\in {\rm Spec}(\Omega): \text{there exists an eigenfunction $\phi\in V(\lambda)$ such that $\int_{\Omega}\phi\ne 0$}\Big\}.
$$
As any eigenfunction associated to $\lambda_1$ does not change sign, we have  that $\lambda_1\in {\rm Spec}_+(\Omega)$. The aim is to prove that
if a domain $\Omega$ has the constant flow property then  the problem at hand (Schiffer problem (D))  admits a solution for all $\lambda\in 
{\rm Spec}_+(\Omega)$. Let us list the elements of  ${\rm Spec}_+(\Omega)$ (in increasing order) as follows:
$$
{\rm Spec}_+(\Omega)=\{\lambda_1^+,\lambda_2^+,\dots\}
$$
Note that $\lambda_1^+=\lambda_1$. 
Let us briefly justify why this subset is actually infinite. Writing  the Fourier series expansion of the constant function $1$, 
one sees that only the eigenvalues in ${\rm Spec}_+(\Omega)$ contribute with a non-zero term: if this set were finite then the Fourier series would  also be finite, which is impossible because otherwise one would have $1=0$ on the boundary.

We now construct a special orthonormal basis of eigenfunctions. Given $\lambda_j^+\in{\rm Spec}_+(\Omega)$ consider the linear map $I:V(\lambda_j^+)\to\reals$ given by integration over $\Omega$:
$$
I\phi=\int_{\Omega}\phi.
$$
As the orthogonal complement of the kernel of $I$ is one-dimensional, we can select an orthonormal basis of $V(\lambda_j)$ as follows:
$$
\{\phi_j^+,\psi_1,\dots,\psi_m\}
$$
where $\phi_j^+$ has a positive integral over $\Omega$, while all the other eigenfunctions $\psi_j$ (if $m\geq 1$) have zero integral. We do this for every element in ${\rm Spec}_+(\Omega)$; for the eigenvalues which do not belong to ${\rm Spec}_+(\Omega)$, we take any orthonormal basis of the respective eigenspace. Repeating the procedure for all eigenvalues, we obtain a special orthonormal basis of $L^2(\Omega)$.

Let ${\rm Spec}(\Omega)=\{\lambda_1,\lambda_2,\dots\}$ be the full Dirichlet spectrum of $\Omega$ (eigenvalues are repeated according to multiplicity), and select any othonormal basis of associated eigenfunctions $\{\phi_j\}_{j=1,2,\dots}$. Then, the heat kernel for the Dirichlet conditions is:
$$
k(t,x,y)=\sum_{j=1}^{\infty}e^{-\lambda_jt}\phi_j(x)\phi_j(y),
$$
and the heat content with  initial data $f$ has the following Fourier expansion:
$$
H_f(t)=\sum_{j=1}^{\infty}e^{-\lambda_jt}\int_{\Omega}\phi_j\cdot\int_{\Omega}f\phi_j.
$$
We now adopt the special orthonormal basis constructed above: it is clear that only the eigenvalues in ${\rm Spec_+}(\Omega)$ show up
\begin{equation}\label{fe}
H_f(t)=\sum_{j=1}^{\infty} e^{-\lambda_j^+t}\int_{\Omega}\phi_j^+\cdot\int_{\Omega}f\phi_j^+
\end{equation}
and that there is only one  term containing each $\lambda_j^+\in {\rm Spec}_+(\Omega)$. 

We can now prove the Theorem. First assume that $\Omega$ has the constant flow property and fix $\psi\in {\cal H}_0(\Omega)$. From the previous theorem, we know that  $H_{\psi}(t)=0$ for all $t$. From \eqref{fe} we easily get (since there is only one term corresponding to $\lambda_j^+$):
$$
\int_{\Omega}\phi_j^+\cdot\int_{\Omega}\psi\phi_j^+=0
$$
for all $j$ and  in turn
$$
\int_{\Omega}\psi\phi_j^+=0
$$
for all $j$ because $\int_{\Omega}\phi_j^+$ is positive by definition. Fix $j$. As $\psi$ is harmonic on $\Omega$  we see, by the Green formula:
$$
\int_{\bd\Omega}\psi\derive{\phi_j^+}{\nu}=\lambda_j^+\int_{\Omega}\psi\phi_j^+=0.
$$
The above holds for all $\psi\in{\cal H}_0(\Omega)$; as any element of $C^{\infty}_0(\bd\Omega)$ 
is the restriction to the boundary of an element of ${\cal H}_0(\Omega)$, the above immediately implies that $
\derive{\phi_j^+}{\nu}
$
is constant on $\bd\Omega$ which proves that the pair $(\lambda_j^+, \phi_j^+)$ is a solution of the Schiffer problem (D). 

\medskip

More precisely, we have proved that a domain $\Omega$ has the constant flow property if and only if the pair $(\lambda_j^+,\phi_j^+)$ is a solution of the Schiffer problem (D) for all $\lambda_j^+\in {\rm Spec}_+(\Omega)$.
\end{proof}

%%

%%%%%%%%%%%%%%%%%%%%%%%%%%%%%%%%%%%%%%%%%%%%%%%%%%%%

\section{Asymptotics of the heat flow: review}\label{ahf}

In this section we will review the main results on the asymptotics of the heat flow, which were proven in \cite{Savo1} and \cite{Savo2}, and which will be used in this paper. 

\smallskip

Given a smooth function $\phi\in C^{\infty}(\Omega)$, we let $\phi_t(x)$ be the solution of the heat equation with initial data $\phi$ and Dirichlet boundary conditions, as in \eqref{hefi}, and we consider the associated heat content function $H_{\phi}(t)=\int_{\Omega}\phi_t$.
 It was first observed by van den Berg and Gilkey in \cite{vdB-G}  that the heat content admits an asymptotic series, as $t\to 0$, of type:
\begin{equation}\label{as}
H_{\phi}(t)\sim \int_{\Omega}\phi-\sum_{k=1}^{\infty}\beta_{k}(\phi)t^{k/2}.
\end{equation}
for a family of invariants $\beta_{k}(\phi)\in \reals$. The authors then computed the coefficients $\beta_k(\phi)$ up to $k=4$
(see also related work for the inhomogeneous case in \cite{vdB-G2}). In \cite{Savo1} a recursive formula for the calculation of the whole asymptotic series \eqref{as} was given: let us explain the outcome. As in Section \ref{gen}, we fix a tubular neighborhood $U$ of $\bd\Omega$ where the distance function $\rho$ to the boundary of $\Omega$ is smooth and write $\nu=\nabla\rho$, a smooth vector field on $U$ which restricts to the unit normal vector on $\bd\Omega$.  Consider the operator $N$ acting on $f\in C^{\infty}(U)$ as follows:
$$
\begin{aligned}
Nf&=2\scal{\nabla f}{\nabla\rho}-f\Delta\rho\\
&=2\derive{f}{\nu}-\eta f
\end{aligned}
$$
Now let 
\begin{equation}\label{algebra}
{\cal A}={\cal A}(N,\Delta)
\end{equation}
be the algebra of differential operators acting on $C^{\infty}(U)$ and generated by the operator $N$ (of degree one), and the Laplacian $\Delta$ (of degree two).
Then, ${\cal A}$ comes with a natural grading given by the degree, and each element of $\cal A$ will be a (non-commutative) polynomial in $N$ and $\Delta$. 
The main result of \cite{Savo1} states that there is a sequence $\{D_k\}$ of differential operators in the algebra $\cal A$ such that the coefficient $\beta_k(\phi)$ is obtained by integrating the function $D_k\phi$ over the boundary. The sequence $\{D_k\}$ is recursively defined, as follows. 
\smallskip

We start by defining  operators $R_{kj},S_{kj}\in {\cal A}(N,\Delta)$, depending on two non-negative integers $k,j$, by the following recursive rule:
\begin{equation}\label{rsone}
\threesystem
{R_{kj}=-(N^2+\Delta)R_{k-1,j}+NS_{k-1,j}}
{S_{kj}=\Delta NR_{k-1,j}-\Delta S_{k-1,j}+NR_{k-1,j-1}}
{R_{00}=I, S_{00}=0, R_{kj}=S_{kj}=0 \quad\text{if $k<0$ or $j<0$}.}
\end{equation}
% One checks that $R_{kj}$ vanishes for  $2j>k$ and $S_{kj}$ vanishes for $2j>k+1$. 
Now set:
$\{a,b\}\doteq\dfrac{\Gamma (a+b+\frac12)}{(a+b)!\Gamma (a+\frac 12)}$, and define the operators $Z_{n+1}, \alpha_n\in{\cal A}(N,\Delta)$ by:
\begin{equation}\label{za}
Z_{n+1}=\sum_{j=0}^n\{n+1,j-1\}R_{n+j,j}, \quad \alpha_n=\sum_{j=0}^{n+1}\{n,j\}S_{n+j,j}.
\end{equation}

This is Theorem 2.1 in \cite{Savo1}.

\begin{thm}\label{savoone}
Let $\beta_k(\phi)$ be the coefficient of  $t^{k/2}$ in the asymptotic expansion of the heat content \eqref{as}, and let $D_k\in{\cal A}(N,\Delta)$ be the homogeneous polynomial of degree $k-1$ defined inductively by the formulas:
\begin{equation}\label{recurrence}
\threesystem
{D_1=\dfrac{2}{\sqrt\pi}I}
{D_{2n}=\dfrac{1}{\sqrt\pi}\sum_{i=1}^n\dfrac{\Gamma(i+\frac 12)\Gamma(n-i+\frac 12)}{n!}D_{2i-1}\alpha_{n-i}}
{D_{2n+1}=\dfrac{1}{\sqrt\pi}Z_{n+1}+\dfrac{1}{\sqrt\pi}\sum_{i=1}^n\frac{i!\Gamma(n-i+\frac 12)}{\Gamma(n+\frac32)}D_{2i}\alpha_{n-i}}
\end{equation}
Then, for all $k\geq 1$, we have 
$
\beta_{k}(\phi)=\int_{\bd\Omega}D_k\phi.
$
\end{thm}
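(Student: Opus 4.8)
The plan is to exploit that the short-time behaviour of $H_\phi(t)$ is a boundary-layer effect: away from $\bd\Omega$ the solution $\phi_t$ differs from $\phi$ by a quantity that is $O(t^\infty)$, so the whole asymptotic series \eqref{as} is manufactured in a tubular neighbourhood of $\bd\Omega$, where the heat flow is modelled by a one-dimensional problem and the ambient geometry enters as a perturbation. The operators $N$ and $\Delta$, the auxiliary families $R_{kj},S_{kj}$ and the constants $\{a,b\}$ are precisely the bookkeeping devices for that perturbation. Concretely, using \eqref{phiut} I would write $H_\phi(t)-\int_\Omega\phi=-\int_\Omega\phi\,w_t$ with $w_t\doteq 1-u_t$; since $w_t$ is exponentially small away from $\bd\Omega$ (standard parabolic estimates), it suffices to expand $-\int_U\phi\,w_t$ over the $\epsilon$-tube $U$ of \eqref{collar}. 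In the normal coordinates $(r,y)$ of $U$ the volume element is $\theta(r,y)\,dr\,dy$ with $\theta(0,y)=1$ and $\bd_r\log\theta=-\eta$, and the Laplacian reads $\Delta=-\bd_r^2+\eta\,\bd_r+\Delta^{\mathrm{tan}}_{r}$, where $\Delta^{\mathrm{tan}}_{r}$ is the intrinsic Laplacian of the equidistant $\rho^{-1}(r)$; thus $w_t$ solves $\bd_t w+\Delta w=0$, $w_0=0$, $w|_{\bd\Omega}=1$, $w\to0$ as $r\to\epsilon$.

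Next I would isolate the one-dimensional model: replacing $\Delta$ by its radial principal part $-\bd_r^2$ gives the half-line problem with the explicit solution $w^{(0)}(t,r)=\operatorname{erfc}\!\big(r/(2\sqrt t)\big)$, and already $\int_0^\infty w^{(0)}\,dr=2\sqrt t/\sqrt\pi$, which accounts for $D_1=\tfrac{2}{\sqrt\pi}I$. Then I would run a Duhamel iteration: set $w_t\sim\sum_{m\ge0}w^{(m)}_t$, where $w^{(m+1)}$ is defined by Duhamel's formula for the half-line heat operator, with vanishing initial and boundary data and source $-(\eta\,\bd_r+\Delta^{\mathrm{tan}}_{r})\,w^{(m)}$. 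Taylor-expanding the geometric data ($\eta$, the induced metrics on the equidistants, $\theta$ and $\phi$) in $r$ about $r=0$ — whose coefficients are exactly the iterated normal derivatives $\deriven{j}{\eta}{\nu}$ and tangential derivatives thereof — and using that $r^j\bd_r^{a}w^{(0)}$ equals $t^{(j-a)/2}$ times a fixed error-function profile in $s=r/(2\sqrt t)$, one sees that each $w^{(m)}$ is a finite sum of terms of shape $t^{k/2}\cdot(\text{profile in }s)\cdot(\text{monomial in }N,\Delta\text{ applied to the data, restricted to }\bd\Omega)$.

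Integrating $-\int_U\phi\,w_t\,\theta\,dr\,dy$ term by term, the $r$-integrals of the profiles — objects like $\int_0^\infty s^m\operatorname{erfc}(s)\,ds$ and iterated Duhamel integrals such as $\int_0^t\arctan\sqrt{\tau/(t-\tau)}\,d\tau$ — produce precisely the constants $\{a,b\}=\Gamma(a+b+\tfrac12)/((a+b)!\,\Gamma(a+\tfrac12))$ and the Gamma-quotients in \eqref{za}--\eqref{recurrence}, while the integrations by parts in $r$ (via $\int_0^\epsilon \tfrac1\theta\bd_r(\theta g)\,\theta\,dr=-g(0)$) convert radial derivatives into boundary evaluations; combining the direct contribution of $w^{(0)}$ with the first Duhamel correction is what produces the specific combination $N=2\bd_\nu-\eta$, factor $2$ included. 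Collecting the coefficient of $t^{k/2}$ gives $\beta_k(\phi)=\int_{\bd\Omega}D_k\phi$ for a homogeneous $D_k\in{\cal A}(N,\Delta)$ of degree $k-1$; re-examining the iteration shows $D_k$ obeys \eqref{recurrence}, the auxiliary operators \eqref{rsone} encoding the two pieces of the source (the transport term $\eta\,\bd_r$ and the tangential term $\Delta^{\mathrm{tan}}_{r}=\Delta-(-\bd_r^2+\eta\,\bd_r)$), and the even/odd splitting of the iteration producing the two formulas for $D_{2n}$ and $D_{2n+1}$. A final estimate on the truncation $w_t-\sum_{m\le M}w^{(m)}_t$ upgrades the formal series to a genuine asymptotic expansion.

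The conceptual steps — localisation, Fermi coordinates, Duhamel iteration — are routine; the main obstacle is the exact combinatorics. One must verify that the iterated integrals of error-function profiles reproduce precisely the constants of \eqref{recurrence} ($\{a,b\}$, $\Gamma(i+\tfrac12)\Gamma(n-i+\tfrac12)/n!$, $i!\,\Gamma(n-i+\tfrac12)/\Gamma(n+\tfrac32)$), and — since $N$ and $\Delta$ do not commute — that the non-commutative ordering hard-wired into \eqref{rsone}, namely $R_{kj}=-(N^2+\Delta)R_{k-1,j}+NS_{k-1,j}$ and $S_{kj}=\Delta NR_{k-1,j}-\Delta S_{k-1,j}+NR_{k-1,j-1}$, is exactly what the iteration yields; keeping signs and operator order straight here is the delicate point. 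A secondary technical issue is the uniformity up to $\bd\Omega$ of the remainder estimates.
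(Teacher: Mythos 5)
The paper itself does not prove this theorem: the sentence immediately preceding the statement reads ``This is Theorem 2.1 in \cite{Savo1},'' so the result is imported, not argued, and there is nothing in the present text for your sketch to be compared against.

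Taken on its own merits, your outline --- boundary-layer localization of $w_t=1-u_t$, Fermi coordinates with the splitting $\Delta=-\partial_r^2+\eta\,\partial_r+\Delta_T$ (which is \eqref{splitting}), the half-line $\operatorname{erfc}$ model recovering $D_1=\tfrac{2}{\sqrt\pi}I$, and a Duhamel iteration for the corrections --- is a standard and coherent strategy, closer in spirit to the parametrix method of van den Berg and Gilkey \cite{vdB-G} than to the notation of \cite{Savo1}, and the one computation you actually carry out, $\int_0^\infty\operatorname{erfc}\!\bigl(r/(2\sqrt t)\bigr)\,dr=2\sqrt{t/\pi}$, is correct. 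But the proposal stops exactly where the theorem starts. The content of the statement is not the mere existence of an expansion with coefficient operators in ${\cal A}(N,\Delta)$, which a parametrix argument would plausibly supply; it is that those coefficients obey the \emph{specific} double-indexed recursion \eqref{rsone}, with the exact constants $\{a,b\}=\Gamma(a+b+\tfrac12)/\bigl((a+b)!\,\Gamma(a+\tfrac12)\bigr)$, the Gamma-quotients of \eqref{recurrence}, and the precise non-commutative orderings such as $S_{kj}=\Delta NR_{k-1,j}-\Delta S_{k-1,j}+NR_{k-1,j-1}$. You flag this yourself (``the main obstacle is the exact combinatorics,'' ``keeping signs and operator order straight here is the delicate point'') and then do not do it; the pivotal sentence ``re-examining the iteration shows $D_k$ obeys \eqref{recurrence}'' asserts the conclusion instead of deriving it. A Duhamel iteration also produces, a priori, a one-index family of corrections, whereas \eqref{rsone} is a genuine two-parameter recursion in $(k,j)$, and you give no indication of how the Taylor-in-$r$ and Duhamel-in-$t$ indices in your scheme would recombine into that structure. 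As written this is a credible plan of attack, not a proof.
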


The  sequence $\{D_k\}$ will be called the sequence of {\it heat content operators} of $\Omega$. We give below the explicit expression of the operators $D_1,\dots,D_6$ (taken from Table 2.2 in \cite{Savo1}). 
\begin{equation}\label{table}
\threesystem
{D_1=\dfrac{2}{\sqrt\pi}Id, \quad D_2=\dfrac12 N, \quad D_3=\frac{1}{6\sqrt\pi}(N^2-4\Delta), \quad
D_4=-\dfrac1{16}(\Delta N+3N\Delta)}
{D_5=-\dfrac1{240\sqrt\pi}\Big(N^4+16N^2\Delta+8N\Delta N-48\Delta^2\Big)}
{D_6=\dfrac{1}{768}\Big(\Delta N^3-N^3\Delta+N\Delta N^2-N^2\Delta N+40N\Delta^2+8\Delta^2N+16\Delta N\Delta\Big)}
\end{equation}
These results were refined in the paper \cite{Savo2} to obtain an asymptotic expansion of the heat flow valid at each point of the boundary (see Theorem 2.1 in \cite{Savo2}).

\begin{thm}\label{phiflow} Let $\phi_t$ be the solution of the heat equation with initial data $\phi$ and Dirichlet boundary conditions, as in \eqref{hefi}. Then, for all $y\in\bd\Omega$, there is an asymptotic series:
$$
\derive{\phi}{\nu}(t,y)\sim\dfrac{\phi(y)}{\sqrt\pi}\cdot\dfrac{1}{\sqrt t}+\sum_{k=0}^{\infty}\tilde D_k\phi(y)\cdot t^{k/2}\quad\text{as}\quad t\to 0,
$$
where $\tilde D_k\in{\cal A}(N,\Delta)$ is the operator 
$
\tilde D_k=\Big(1+\frac{k}2\Big)D_{k+2}.
$
\end{thm}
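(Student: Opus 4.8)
The plan is to derive Theorem \ref{phiflow} from the heat-content asymptotics of Theorem \ref{savoone}; the only real work is to pass from an identity of boundary integrals to a pointwise identity on $\bd\Omega$. The first, easy, observation links the two: differentiating $H_\phi(t)=\int_\Omega\phi_t$ and using the heat equation in the form $\derive{\phi_t}{t}=-\Delta\phi_t$ together with the divergence theorem (recall that $\nu$ is the \emph{inner} unit normal, so that $\Delta=-{\rm div}\,\nabla$ has non-negative spectrum) gives
\begin{equation*}
\int_{\bd\Omega}\derive{\phi_t}{\nu}=\int_\Omega\Delta\phi_t=-\int_\Omega\derive{\phi_t}{t}=-H'_\phi(t).
\end{equation*}
Differentiating the expansion \eqref{as} term by term — legitimate here because the remainders in \eqref{as} are themselves uniformly estimated, cf. \cite{vdB-G} — and re-indexing $k\mapsto k+2$, one finds that $\int_{\bd\Omega}\derive{\phi_t}{\nu}$ has exactly the expansion claimed in the theorem, \emph{once integrated over $\bd\Omega$}: the $t^{-1/2}$-coefficient is $\tfrac12\beta_1(\phi)=\tfrac1{\sqrt\pi}\int_{\bd\Omega}\phi$, and the $t^{k/2}$-coefficient is $\tfrac{k+2}{2}\beta_{k+2}(\phi)=(1+\tfrac k2)\int_{\bd\Omega}D_{k+2}\phi$ for $k\ge0$. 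So the existence of the expansion, and the normalization $\tilde D_k=(1+\tfrac k2)D_{k+2}$, are already forced at the level of boundary integrals.

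To make this pointwise I would first establish, by a parametrix construction on the collar $U=\{\rho<\epsilon\}$ of \eqref{collar}, that $\derive{\phi}{\nu}(t,y)$ really does admit an asymptotic expansion $\tfrac{\phi(y)}{\sqrt\pi}t^{-1/2}+\sum_{k\ge0}c_k(y)\,t^{k/2}$, uniformly in $y\in\bd\Omega$, with each $c_k\in C^\infty(\bd\Omega)$ a \emph{local} invariant — a universal differential polynomial in the jets at $y$ of the metric, of the shape operator of $\bd\Omega$, and of $\phi$. Concretely, in the boundary-normal coordinates $(r,y)$ on $U$ one writes $\phi_t(r,y)$ as a formal series in $t^{1/2}$ whose coefficients are explicit profiles in $r/\sqrt t$, substitutes into the heat equation, solves the resulting hierarchy of ordinary differential equations in the profile variable under the condition $\phi_t(0,y)=0$, and reads off $c_k(y)$ as the appropriate normal derivative at $r=0$; this also exhibits the $c_k$ as elements of the algebra ${\cal A}(N,\Delta)$, homogeneous of degree $k+1$.

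It then remains to identify $c_k$ with $(1+\tfrac k2)D_{k+2}$ as operators. One tests against an arbitrary $f\in C^\infty(\bd\Omega)$: extending $f$ to some $\psi\in C^\infty(\Omega)$, Green's second identity and the heat equation give, since $\phi_t$ vanishes on $\bd\Omega$,
\begin{equation*}
\int_{\bd\Omega}f\,\derive{\phi_t}{\nu}=\int_\Omega\big(\psi\,\Delta\phi_t-\phi_t\,\Delta\psi\big)=-\dfrac{d}{dt}\int_\Omega\psi\phi_t-\int_\Omega(\Delta\psi)\,\phi_t,
\end{equation*}
which reduces the question to the small-time expansions of the weighted heat contents $t\mapsto\int_\Omega g\,\phi_t$ for the weights $g=\psi$ and $g=\Delta\psi$. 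These are governed by a weighted version of the recursive scheme behind Theorem \ref{savoone} — the operators $R_{kj},S_{kj},Z_{n+1},\alpha_n$ were produced by iterating the heat equation against the distance function $\rho$ — and in the combination above the interior contributions cancel, leaving boundary integrals that should recombine, through the combinatorial factors $\{a,b\}$, into $(1+\tfrac k2)\int_{\bd\Omega}f\,D_{k+2}\phi$. Since $f$ is arbitrary this yields $c_k=(1+\tfrac k2)D_{k+2}\phi$ pointwise. I expect this last step — running the recursion with a weight, checking that the $\psi$-dependence collapses to the boundary datum $f$, and matching the $\Gamma$-factors to the prefactor $1+\tfrac k2$ — to be the main obstacle; one can hope to shorten it using the self-adjointness identity $\int_\Omega g\,\phi_t=\int_\Omega\phi\,g_t$ and the recursion of \cite{Savo1} that is already in hand.
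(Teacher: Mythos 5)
The paper gives no internal proof of Theorem \ref{phiflow}: it is stated and then attributed to Theorem~2.1 of \cite{Savo2} (``These results were refined in the paper \cite{Savo2} to obtain an asymptotic expansion of the heat flow valid at each point of the boundary''). So there is nothing to compare against directly; I can only assess your argument on its own terms.

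Your first two steps are sound and correctly oriented: the identity $\int_{\bd\Omega}\derive{\phi_t}{\nu}=-H'_\phi(t)$, the term-by-term differentiation of \eqref{as}, the reindexing $k\mapsto k+2$ producing the factor $1+\tfrac k2$, and the observation that a parametrix construction in the boundary-normal coordinates $(r,y)$ yields the \emph{existence} of a pointwise expansion with local coefficients $c_k(y)$ that are universal polynomials in $N$, $\Delta$ applied to $\phi$ at $y$. The gap is in the identification $c_k=\tilde D_k\phi$. The integral constraint you extract, $\int_{\bd\Omega}c_k=(1+\tfrac k2)\int_{\bd\Omega}D_{k+2}\phi$ for all $\phi$, does \emph{not} force the pointwise equality: two elements of ${\cal A}(N,\Delta)$ of the same homogeneous degree can integrate to the same quantity over $\bd\Omega$ for every $\phi$ and every domain while differing by tangential-divergence terms on the boundary (note that the expression $\tilde D_k=(1+\tfrac k2)D_{k+2}$ in the theorem is an exact operator identity, not an equality modulo divergences). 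Your proposed remedy — test against $f\in C^\infty(\bd\Omega)$, extend to $\psi$, and expand the weighted heat contents $\int_\Omega\psi\,\phi_t$ and $\int_\Omega(\Delta\psi)\,\phi_t$ — is where the proof must actually do its work, and as written it is only a hope: there is no weighted version of Theorem \ref{savoone} in hand, no argument that the interior $\psi$-dependence cancels, and no computation showing the $\Gamma$-factors recombine. Since you yourself flag this as ``the main obstacle,'' I would say concretely what is missing: either carry out the weighted Duhamel recursion and prove the cancellation, or — more in the spirit of \cite{Savo1}/\cite{Savo2} — notice that the Duhamel iteration underlying Theorem \ref{savoone} already produces, before integrating over $\bd\Omega$, a pointwise identity for the normal derivative of $\phi_t$ along $\bd\Omega$ in terms of the same $R_{kj},S_{kj}$, and extract the expansion from that integrand directly rather than trying to de-integrate after the fact. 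As it stands the proposal sketches the skeleton of a proof but does not establish the theorem.
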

Taking unit initial data $\phi=1$ we see that the corresponding solution will be $u_t$, as in \eqref{temp}. Then, at each point $y\in\bd\Omega$,  Theorem \ref{phiflow} will give the asymptotic expansion of the heat flow:
\begin{equation}\label{uflow}
\derive{u}{\nu}(t,y)\sim\dfrac{1}{\sqrt\pi}\cdot\dfrac{1}{\sqrt t}+\sum_{k=0}^{\infty}\tilde D_k 1(y)\cdot t^{k/2}.
\end{equation}
If the heat flow is constant on $\bd\Omega$ for all times $t>0$, then necessarily the function $\tilde D_k1$ (hence also the function $D_k1$), when restricted to the boundary, will be constant  for all $k$.   We summarize these considerations in the following 

\begin{thm}\label{bk} Assume that $\Omega$ has the constant flow property, and let $\{D_k\}$ be the sequence of heat content operators, as defined in \eqref{recurrence}.  Then the function
$$
B_k\doteq D_k1|_{\bd\Omega}
$$
is constant on $\bd\Omega$ for all $k$. 
\end{thm}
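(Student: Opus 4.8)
The plan is to read the conclusion straight off the pointwise asymptotic expansion of the heat flow. First I would apply Theorem~\ref{phiflow} with constant initial data $\phi\equiv 1$: the corresponding solution of \eqref{hefi} is precisely the function $u_t$ of \eqref{temp}, so the theorem produces, at each fixed boundary point $y\in\bd\Omega$, the asymptotic series \eqref{uflow}, whose coefficients are the values $\tilde D_k1(y)$, where $\tilde D_k=\bigl(1+\tfrac k2\bigr)D_{k+2}$. Note that the $t^{-1/2}$ coefficient, namely $1/\sqrt\pi$, is already independent of $y$.

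Next, fix two boundary points $y_1,y_2\in\bd\Omega$. Since $\Omega$ has the constant flow property, for every fixed $t>0$ the heat flow $\derive{u}{\nu}(t,\cdot)$ is constant on $\bd\Omega$; in particular $\derive{u}{\nu}(t,y_1)=\derive{u}{\nu}(t,y_2)$ for all $t>0$, so the two functions of $t$ obtained by evaluating the heat flow at $y_1$ and at $y_2$ are literally one and the same function. Hence they have the same asymptotic expansion as $t\to 0^+$, and by uniqueness of asymptotic expansions in powers of $t^{1/2}$ (the leading terms $1/\sqrt{\pi t}$ are equal and cancel, after which one extracts successive coefficients in the standard way) we obtain $\tilde D_k1(y_1)=\tilde D_k1(y_2)$ for every $k\ge 0$. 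As $y_1,y_2\in\bd\Omega$ were arbitrary, each function $\tilde D_k1$ is constant on $\bd\Omega$. Since $1+\tfrac k2\ne 0$, the relation $\tilde D_k=\bigl(1+\tfrac k2\bigr)D_{k+2}$ then gives that $D_{k+2}1|_{\bd\Omega}=B_{k+2}$ is constant for all $k\ge 0$, i.e.\ $B_m$ is constant on $\bd\Omega$ for every $m\ge 2$; and $B_1=D_11|_{\bd\Omega}=2/\sqrt\pi$ is trivially constant because $D_1=\tfrac{2}{\sqrt\pi}I$. This gives the claimed constancy of all the $B_k$.

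I do not expect any serious obstacle: the statement is essentially a repackaging of the heat-flow asymptotics of \cite{Savo1} and \cite{Savo2} together with the definition of the constant flow property. The one point that deserves a word is whether one may legitimately compare the \emph{pointwise} expansions of Theorem~\ref{phiflow} at two distinct boundary points; this is harmless precisely because the constant flow property forces $t\mapsto\derive{u}{\nu}(t,y_1)$ and $t\mapsto\derive{u}{\nu}(t,y_2)$ to coincide identically, so no uniformity of the remainder in $y$ is needed and uniqueness of a single-variable asymptotic series already suffices. If one prefers, compactness of $\bd\Omega$ and smoothness of the coefficients $D_k1$ make the expansion in Theorem~\ref{phiflow} uniform in $y$, and then two expansions can simply be subtracted; either route works.
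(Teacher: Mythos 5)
Your proof is correct and is essentially the paper's own argument: apply Theorem~\ref{phiflow} with $\phi\equiv 1$ to get the pointwise expansion \eqref{uflow}, then use the constant flow property together with uniqueness of asymptotic expansions to conclude that each coefficient $\tilde D_k1$ (hence $D_{k+2}1$) is constant on $\bd\Omega$. The paper states this more tersely, but the reasoning is the same; your remark on why pointwise comparison at two boundary points suffices is a nice, if unnecessary, clarification.
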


%%%%%%%%%%%%%%

\subsection{Proof of Theorem \ref{general} for $k\leq 2$}\label{two}
We finish this section by showing that the constant heat flow assumption and the expression of $D_1,D_2,D_3,D_4$ as given in \eqref{table} will imply that $\eta,\derive{\eta}{\nu}$ and $\deriven{2}{\eta}{\nu}$ are constant on $\bd\Omega$. This will give a hint for the general proof of Theorem \ref{general} (which states that the normal derivative, of arbitrary order, of the function $\eta$  is constant on $\bd\Omega$). 

\smallskip 

Notice that as $N1=-\eta$ and $\Delta1=0$ we have, from table \eqref{table}:
\begin{equation}\label{bifour}
B_2=-\frac12\eta, \quad B_3=-\dfrac{1}{6\sqrt\pi}N\eta, \quad B_4=\dfrac{1}{16}\Delta\eta.
\end{equation}
We now observe a useful splitting of the Laplace operator in the neighborhood $U$. Given $f\in C^{\infty}(U)$, we have
\begin{equation}\label{splitting}
\Delta f=-\deriven{2}{f}{\nu}+\eta\derive{f}{\nu}+\Delta_T f
\end{equation}
where $\Delta_T f$, the {\it tangential Laplacian} of $f$, is defined as follows. For $x\in U$, let $\rho^{-1}(r)$ be the level set of $\rho$ containing $x$ (so that $\rho(x)=r$). Then $\Delta_T f(x)$
is the Laplacian (for the induced metric on $\rho^{-1}(r)$) of the restriction of $f$ to $\rho^{-1}(r)$.
We will call
\begin{equation}\label{rad}
\Delta_R f\doteq -\deriven{2}{f}{\nu}+\eta\derive{f}{\nu}
\end{equation}
the {\it radial} Laplacian of $f$, so that, on $U$: $\Delta f=\Delta_R f+\Delta_T f$. The splitting \eqref{splitting} is easily proved by working with orthonormal frames of type $(e_1,\dots,e_{n-1},\nu)$, so that $(e_1,\dots,e_{n-1})$ will be an orthonormal frame of the equidistant through the point.
Then, from formulae  \eqref{bifour} and the splitting \eqref{splitting} we obtain: 
\begin{equation}\label{part}
\threesystem
{B_2=-\frac 12\eta}
{B_3=-\frac{1}{6\sqrt\pi}\Big(2\derive{\eta}{\nu}-\eta^2\Big)}
{B_4=\frac 1{16}\Big(-\deriven {2}{\eta}{\nu}+\eta\derive{\eta}{\nu}+\Delta_T\eta\Big)}
\end{equation}
If $\Omega$ has the constant flow property, then Theorem \ref{bk} asserts that the functions $B_k$ are all constant on $\bd\Omega$: an obvious inductive argument will then show that the normal derivatives $\eta,\derive{\eta}{\nu}$ and $\deriven{2}{\eta}{\nu}$ are also constant on $\bd\Omega$, which is Theorem \ref{general} for $k\leq 2$.

\smallskip

Finally, we observe the following immediate proof of Theorem \ref{main} when $\Omega$ is a domain in Euclidean or hyperbolic space. In fact, from table \eqref{part}, we see that $\bd\Omega$ has constant mean curvature, hence, from a well-known result by Alexandrov, $\Omega$ must be a ball.

%%%%%%%%%%%%%%%%%%%%%%%%%%%%%%%%%%%%%%%%%%%%%%%%%%%%%%%%%%%%%%

\section{Proof of Theorem \ref{general}} \label{sgeneral} In this section we write the invariants $B_k=D_k1|_{\bd\Omega}$ of Theorem \ref{bk}
in terms of the normal derivatives of the function $\eta$ (as we have done it in Section \ref{two} for $k\leq 4$) : this will be used to give an inductive proof of Theorem \ref{general}.

We start by writing the invariants $B_k$ in a more suitable way. Given an operator $A$ of degree at least one in the algebra ${\cal A}(N,\Delta)$ defined in \eqref{algebra}, we can decompose it as follows:
\begin{equation}\label{dec}
A=\bar AN+\tilde A\Delta,
\end{equation}
for uniquely defined operators $\bar A$ (with $\deg \bar A=\deg{A}-1$) and $\tilde A$ (with  $\deg\tilde A=\deg{A}-2$).
Clearly the map $A\to\bar A$ is linear, and for $A,B\in{\cal A}(N,\Delta)$ one has:
\begin{equation}\label{overline}
\overline{AB}=A\overline{B}.
\end{equation}
For example, from table \eqref{table} we see:
\begin{equation}\label{list}
\quad \bar D_2=\dfrac12 I, \quad \bar D_3=\frac{1}{6\sqrt\pi}N, \quad
\bar D_4=-\dfrac1{16}\Delta, \quad \bar D_5=-\frac{1}{240\sqrt\pi}(N^3+8N\Delta).
\end{equation}
As $N1=-\eta$ and $\Delta 1=0$ we see that  $D_k1=-\bar D_k\eta$. Then, Theorem \ref{bk}  becomes the following statement. 
\begin{prop} \label{bardkey} Assume that $\Omega$ has the constant flow property. Then the function
$$
B_k=-\bar D_k\eta|_{\bd\Omega}
$$
is constant on $\bd\Omega$ for all $k$.
\end{prop}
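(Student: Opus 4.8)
The plan is to obtain Proposition \ref{bardkey} as an immediate reformulation of Theorem \ref{bk}, the only extra ingredient being the algebraic decomposition \eqref{dec}. Under the constant flow property Theorem \ref{bk} already guarantees that $B_k = D_k1|_{\bd\Omega}$ is constant on $\bd\Omega$ for every $k$, so everything reduces to checking the operator identity $D_k1 = -\bar D_k\eta$ on the tubular neighborhood $U$.

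First I would dispose of the trivial case $k=1$: since $D_1 = \frac{2}{\sqrt\pi}I$, the function $B_1$ is just the constant $\frac{2}{\sqrt\pi}$ and one may set $\bar D_1 = 0$. For $k\geq 2$ the heat content operator $D_k$ is homogeneous of degree $k-1\geq 1$ in ${\cal A}(N,\Delta)$, hence the decomposition \eqref{dec} applies and we may write $D_k = \bar D_k N + \tilde D_k\Delta$, with $\deg\bar D_k = k-2$ and $\deg\tilde D_k = k-3$ (so $\tilde D_2 = 0$, which is harmless). Applying this operator identity to the constant function $1$ and using the two elementary facts $N1 = -\eta$ (immediate from the definition $Nf = 2\derive{f}{\nu}-\eta f$, since $\derive{1}{\nu} = 0$) and $\Delta1 = 0$, we get
\[
D_k1 = \bar D_k(N1) + \tilde D_k(\Delta1) = -\bar D_k\eta .
\]
Restricting to $\bd\Omega$ and invoking Theorem \ref{bk} then gives $B_k = -\bar D_k\eta|_{\bd\Omega} = \const$, which is the assertion.

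There is no real obstacle in this step: Proposition \ref{bardkey} is a bookkeeping rewrite of Theorem \ref{bk} designed to make explicit the dependence of the invariants on the single function $\eta$, and the only points to verify are the degree count that makes \eqref{dec} legitimate and the identities $N1 = -\eta$, $\Delta1 = 0$. The genuine difficulty is postponed to what comes next: in the subsequent propositions one must show, via the splitting $\Delta = \Delta_R + \Delta_T$ of \eqref{splitting} and a careful analysis of the recursion \eqref{recurrence}, that the weight of $\bar D_k$ — the coefficient of the top-order normal derivative $\deriven{k-2}{\eta}{\nu}$ in $\bar D_k\eta$ — is nonzero for every $k$, so that the constancy of all the $B_k$ can be bootstrapped into the constancy of every normal derivative of $\eta$ and Theorem \ref{general} follows.
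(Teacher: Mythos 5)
Your proof is correct and matches the paper's argument exactly: the paper also observes that $N1=-\eta$ and $\Delta 1=0$ give $D_k1=-\bar D_k\eta$, then invokes Theorem \ref{bk}. The extra remarks you add about the degree count in \eqref{dec} and the trivial case $k=1$ are sound and consistent with the paper's conventions.
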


Our next task is to determine the coefficient of the normal derivative of highest order in a given homogeneous operator belonging to the algebra ${\cal A}(N,\Delta)$. Define a function, called the {\it weight}: 
$$
w:{\cal A}(N,\Delta)\to\reals
$$
by setting $w(I)=1, w(N)=2, w(\Delta)=-1$ and then extending $w$ to ${\cal A}$ as an algebra homomorphism.  For example, from list \eqref{list}:
$$
w(\bar D_2)=\frac 12, \quad w(\bar D_3)=\dfrac{1}{3\sqrt\pi}, \quad w(\bar D_4)=\dfrac1{16}, \quad w(\bar D_5)=\dfrac{1}{30\sqrt\pi}.
$$

%$w(N^2+4\Delta)=0$. 

From the decomposition $\Delta=\Delta_R+\Delta_T$ of \eqref{splitting} one sees why $\Delta$ should have  weight $-1$.
 The inductive step is based on the following fact.

\begin{prop}\label{level} Fix an integer $k\geq 0$ and assume that the function $\eta$, together with all of its normal derivatives up and including the order $k$, restricts to a constant function on $\bd\Omega$. Then:

\item (a) If $A$ is an operator in ${\cal A}(N,\Delta)$ homogeneous of degree $k+1$, then:
$$
A\eta|_{\bd\Omega}=w(A)\deriven{k+1}{\eta}{\nu}|_{\bd\Omega}+c_k,
$$
where $w(A)$ is the weight of $A$ and $c_k$ is constant on $\bd\Omega$. 

\item (b) If  $B_{k+3}\in C^{\infty}(\bd\Omega)$ is  the invariant of Proposition \ref{bardkey}, then, for all $y\in\bd\Omega$:
$$
B_{k+3}(y)=-w(\bar D_{k+3})\deriven{k+1}{\eta}{\nu}(y)+b_k
$$
where $b_k$ is a constant which does not depend on $y\in\bd\Omega$.
\end{prop}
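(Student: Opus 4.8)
The plan is to establish (a) by a structural analysis of how operators in $\mathcal A(N,\Delta)$ act on the single function $\eta$, and then to deduce (b) in one line. By linearity of $A\mapsto A\eta|_{\bd\Omega}$ and of the weight $w$, I will first reduce (a) to the case where $A$ is a single monomial in $N$ and $\Delta$. The main device is the splitting $\Delta=\Delta_R+\Delta_T$ of \eqref{splitting}, with $\Delta_R f=-\deriven{2}{f}{\nu}+\eta\derive{f}{\nu}$ and $\Delta_T$ purely tangential, together with $Nf=2\derive{f}{\nu}-\eta f$. Substituting $\Delta=\Delta_R+\Delta_T$ into the monomial $A$ and expanding, I will write $A=P+Q$, where $P$ is a monomial in $N$ and $\Delta_R$ alone (still homogeneous of degree $k+1$) and $Q$ is a sum of monomials each containing at least one factor $\Delta_T$; then I will analyze $P\eta$ and $Q\eta$ on $\bd\Omega$ separately.

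For the $\Delta_T$-free part: $N$ and $\Delta_R$ involve only $\derive{}{\nu}$ and multiplication by $\eta$, so $P\eta$ is a differential polynomial with real coefficients in $\eta,\derive{\eta}{\nu},\deriven{2}{\eta}{\nu},\dots$ — with no tangential derivatives and no ambient metric coefficients. I will filter such expressions by ``$\nu$-order'', assigning $\deriven{j}{\eta}{\nu}$ the order $j$ and a product the sum of the orders of its factors. Since $N$ raises $\nu$-order by at most $1$ and $\Delta_R$ by at most $2$, with equality only when the leading parts $2\derive{}{\nu}$, resp.\ $-\deriven{2}{}{\nu}$, act, an easy induction on the length of the monomial shows that $P\eta$ has $\nu$-order $\le k+1$ and that its order-$(k+1)$ part is the single term $w(A)\deriven{k+1}{\eta}{\nu}$: it is a single term because one starts from the one-term expression $\eta$ and stays one-term whenever each step is taken in leading order, and the coefficient is $w(A)$ because $w(N)=2$ and $w(\Delta_R)=w(\Delta)=-1$. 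All other terms of $P\eta$ have $\nu$-order $\le k$, hence are polynomials in $\eta,\dots,\deriven{k}{\eta}{\nu}$, which by hypothesis restrict to a constant on $\bd\Omega$. Thus $P\eta|_{\bd\Omega}=w(A)\deriven{k+1}{\eta}{\nu}|_{\bd\Omega}+\const$.

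For the $\Delta_T$-containing part I will show that each monomial of $Q$, applied to $\eta$, restricts to $0$ on $\bd\Omega$. Fixing such a monomial and looking at its innermost factor $\Delta_T$, everything to its right is a monomial in $N$ and $\Delta_R$ only, hence maps $\eta$ to a differential polynomial in $\eta$ along $\nu$ with no metric coefficients; when $\Delta_T$ then acts, its two tangential derivatives have nothing to land on but normal derivatives of $\eta$, so in every resulting monomial some factor becomes ``at least one tangential derivative of $\deriven{a}{\eta}{\nu}$'' (up to multiplication by metric coefficients, which only appear as factors, never tangentially differentiated in a way that removes the tangential derivative from $\eta$). The operators still remaining in the monomial only add further derivatives or further factors, so this distinguished factor persists; a degree count gives that its order $a$ of normal differentiation stays $\le k$. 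Since $\derive{}{\nu}$ and the tangential coordinate derivatives commute on the collar $U$, restricting this factor to $\bd\Omega$ amounts to tangentially differentiating the constant $\deriven{a}{\eta}{\nu}|_{\bd\Omega}$ and so gives $0$. Hence $Q\eta|_{\bd\Omega}=0$, and combining with the previous paragraph proves (a), with $c_k$ the constant $P\eta|_{\bd\Omega}-w(A)\deriven{k+1}{\eta}{\nu}|_{\bd\Omega}$. Part (b) is then immediate: recalling that $D_{k+3}1=-\bar D_{k+3}\eta$, whence $B_{k+3}=-\bar D_{k+3}\eta|_{\bd\Omega}$ (Proposition \ref{bardkey}), and that $\bar D_{k+3}$ is homogeneous of degree $k+1$ because $D_{k+3}$ is homogeneous of degree $k+2$, I apply (a) with $A=\bar D_{k+3}$ and set $b_k=-c_k$.

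The step I expect to be the main obstacle is exactly the bookkeeping for the $\Delta_T$-terms: one must be certain that the tangential derivatives produced by a $\Delta_T$ always fall on low-order normal derivatives of $\eta$ rather than on ambient metric coefficients (which need not be constant on $\bd\Omega$ and could otherwise spoil the conclusion). This is precisely why the argument has to be organized around the \emph{innermost} $\Delta_T$ — to its right only $N$ and $\Delta_R$ occur, and these introduce no metric coefficients — together with the degree count that pins the relevant normal-differentiation order at $\le k$.
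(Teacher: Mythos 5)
Your overall decomposition is exactly the paper's: split $A=A_R+A_T$ (your $P+Q$) along $\Delta=\Delta_R+\Delta_T$, show that the $\Delta_T$-free part yields $w(A)\deriven{k+1}{\eta}{\nu}$ plus a polynomial in lower-order normal derivatives of $\eta$ (hence a constant on $\bd\Omega$), and show that the $\Delta_T$-containing part restricts to zero on $\bd\Omega$; part (b) then follows by applying (a) to $\bar D_{k+3}$, which is degree $k+1$. The treatment of the first part is essentially identical.

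The difference is entirely in the $\Delta_T$-part, and there your route is genuinely different from --- and more fragile than --- the paper's. You track a ``distinguished tangential-derivative factor'' through subsequent applications of $N,\Delta_R,\Delta_T$, arguing that it persists under the product rule and that a degree count keeps its normal order $\le k$, so that at $\bd\Omega$ one is tangentially differentiating a constant. This can be made to work (the degree count $a\le k-1$ does come out, and the product rule for $\Delta_T$ never removes the tangential derivative from the distinguished factor), but it is substantially harder to make airtight than you indicate: one must be precise about what ``the distinguished factor persists'' means across repeated applications of $\Delta_T$ (whose metric coefficients enter, get differentiated by later operators, and interact with the product rule), and one must carry out the degree count carefully through the expansion. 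You flag this as the obstacle but do not actually close it. The paper instead isolates exactly the information needed via the notion of \emph{level}: it proves (Lemma~\ref{levelk}) that level $\ge h$ is equivalent to $\phi = f\circ\rho + \rho^{h+1}\psi$, and (Lemma~\ref{obvious}) that $N$, $\Delta_R$, $\Delta_T$ lower level by at most $1,2,0$ respectively. Writing each $\Delta_T$-term as $B\Delta_T C$ with $\deg B=h$, $\deg C=k-h-1$, one gets that $C\eta$ has level $\ge h+1$, hence $\Delta_T C\eta = \rho^{h+2}\Delta_T\psi$ vanishes to order $h+2$, and since $B$ is a differential operator of order $\le h$ the term vanishes on $\bd\Omega$. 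This replaces all product-rule bookkeeping with a single order-of-vanishing count. If you want to keep your approach, I would recommend formalizing your ``$\nu$-order'' filtration together with the ``persistence of the tangential factor'' via an explicit induction over the word in $N,\Delta_R,\Delta_T$ --- at which point you will essentially have re-derived the paper's level lemmas, which suggests that encapsulating the argument that way from the start is the cleaner choice.
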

We will give the proof of this proposition in Section \ref{plevel} below. 
The following result is the main combinatorial fact needed in the proof of Theorem \ref{general}; its proof will take the rest of the paper, starting from the  next section. 

\begin{thm}\label{combinatorial} Let $\{D_k\}$ be the sequence of heat content operators, and  consider the operators $\bar D_k$ defined in \eqref{dec}. Then, one has for all $n\geq 1$:
$$
\twosystem
{w(\bar D_{2n})=\frac{2}{4^n n!}}
{w(\bar D_{2n+1})=\dfrac{1}{\sqrt\pi}\cdot\dfrac{1}{2^{n-1}(2n+1)!!}}
$$
In particular, $w(\bar D_k)\ne 0$ for all $k$.
\end{thm}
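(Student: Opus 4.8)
The plan is to compute the weight $w(\bar D_k)$ directly from the recursive scheme of Theorem \ref{savoone}, pushing the homomorphism $w$ through every step. The first observation is that $w$, being an algebra homomorphism, interacts nicely with the operation $A\mapsto \bar A$: one checks from the decomposition $A=\bar AN+\tilde A\Delta$ that applying $w$ gives $w(A)=2w(\bar A)$ whenever $A$ has a nonzero $N$-leading part; more precisely, since $w(N)=2$ and $w(\Delta)=-1$, the weight picks out exactly the coefficient of the top normal derivative, which lives in the $\bar A N$ summand. So the task reduces to tracking a single scalar through the recursions \eqref{rsone}, \eqref{za} and \eqref{recurrence}. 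I would set $r_{kj}=w(R_{kj})$, $s_{kj}=w(S_{kj})$, $z_{n+1}=w(Z_{n+1})$, $a_n=w(\alpha_n)$, $d_k=w(D_k)$, and rewrite each defining identity as a numerical recursion: from \eqref{rsone}, using $w(N^2+\Delta)=4-1=3$, one gets $r_{kj}=-3r_{k-1,j}+2s_{k-1,j}$ and $s_{kj}=-2r_{k-1,j}-(-1)s_{k-1,j}+2r_{k-1,j-1}$ — wait, being careful with signs: $w(\Delta N)=-2$, $w(\Delta S)=-s$, $w(NR)=2r$, so $s_{kj}=-2r_{k-1,j}+s_{k-1,j}+2r_{k-1,j-1}$, with the same seed values $r_{00}=1$, $s_{00}=0$.

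Next I would solve these auxiliary recursions in closed form. The pair $(r_{kj},s_{kj})$ depends on $k$ and $j$; I expect, after examining small cases, that $r_{kj}$ and $s_{kj}$ are (up to simple combinatorial factors) binomial-type expressions, and in any case one can extract the precise values of the combinations $\sum_j\{n+1,j-1\}r_{n+j,j}$ and $\sum_j\{n,j\}s_{n+j,j}$ that define $z_{n+1}$ and $a_n$. Here is where the numbers $\{a,b\}=\Gamma(a+b+\tfrac12)/((a+b)!\,\Gamma(a+\tfrac12))$ enter, and the sums over $j$ become Hankel-type sums; as the introduction already signals, these are handled via the Hankel-transform identities attributed to Tamm (Lemma \ref{mainlemma} and the Appendix). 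Once $z_{n+1}$ and $a_n$ are known explicitly, the recursion \eqref{recurrence} for $d_k=w(D_k)$ becomes a self-contained scalar recursion with the $\Gamma$-ratio coefficients $\Gamma(i+\tfrac12)\Gamma(n-i+\tfrac12)/n!$ and $i!\,\Gamma(n-i+\tfrac12)/\Gamma(n+\tfrac32)$; solving it (again with the Hankel/$\Gamma$-function identities, or by an induction hypothesis matching the claimed formulas $d_{2n}=w(D_{2n})$ and $d_{2n+1}=w(D_{2n+1})$ — which differ from the claimed $w(\bar D)$ values by the factor $2$ noted above) yields the stated values $w(\bar D_{2n})=2/(4^n n!)$ and $w(\bar D_{2n+1})=1/(\sqrt\pi\,2^{n-1}(2n+1)!!)$. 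The nonvanishing assertion is then immediate since all these expressions are manifestly positive.

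I expect the main obstacle to be the evaluation of the alternating sums defining $z_{n+1}$ and $a_n$: the recursions \eqref{rsone} mix the two index directions ($k$ and $j$) with sign changes, so the closed forms for $r_{kj},s_{kj}$ are not obviously positive and their weighted sums against the coefficients $\{a,b\}$ require genuine hypergeometric/Hankel-transform machinery rather than a bare induction. This is precisely why the paper devotes several subsequent sections plus an Appendix to it; in this plan I would isolate that computation as the content of Theorem \ref{combinatorial}'s proof and reduce everything else — the passage from operator identities to scalar recursions via $w$, and the final bookkeeping with $\Gamma$-ratios — to routine verification, checking consistency against the already-tabulated values $w(\bar D_2)=\tfrac12$, $w(\bar D_3)=1/(3\sqrt\pi)$, $w(\bar D_4)=1/16$, $w(\bar D_5)=1/(30\sqrt\pi)$, which indeed match $2/(4\cdot1)$, $1/(\sqrt\pi\cdot1\cdot3)$, $2/(16\cdot2)$, $1/(\sqrt\pi\cdot2\cdot15)$ respectively.
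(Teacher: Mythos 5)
The key step in your plan is the claim that $w(A)=2w(\bar A)$ whenever $A$ has a nonzero $N$-leading part, which you then use to pass from $w(D_k)$ to $w(\bar D_k)$; this claim is false, and the strategy fails precisely at that conversion. From the decomposition $A=\bar AN+\tilde A\Delta$ and the homomorphism property one gets $w(A)=2w(\bar A)-w(\tilde A)$, and the $\tilde A\Delta$ summand genuinely contributes to the leading normal derivative because $\Delta=\Delta_R+\Delta_T$ contains $-\partial^2/\partial\nu^2$ through its radial part. Already for $D_3=\frac{1}{6\sqrt\pi}(N^2-4\Delta)$ one has $w(D_3)=\frac{8}{6\sqrt\pi}$ while $w(\bar D_3)=\frac{2}{6\sqrt\pi}$, a ratio of $4$; for $D_4$ the ratio is $8$, for $D_5$ it is $16$. (In fact the ratio equals $2^{k-1}$, but that is a consequence of both formulas, not an input you can use.) So computing the unbarred weights $w(R_{kj})$, $w(S_{kj})$, $w(Z_n)$, $w(\alpha_n)$, $w(D_k)$ via your scalar recursions does not determine $w(\bar D_k)$.

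The paper avoids this by barring first, using the identity $\overline{AB}=A\bar B$: applied to \eqref{recurrence} this gives the barred recursion \eqref{barrecurrence}, in which $\bar\alpha_{n-i}$ and $\bar Z_{n+1}$ appear barred but $D_{2i-1}$, $D_{2i}$ appear unbarred (they sit to the left of $\alpha$). Two separate inputs are then needed: $w(D_k)$ for the unbarred heat content operators, obtained by a functorial argument on $\Omega=[0,1]$ where $\eta\equiv 0$, so that every degree-$j$ operator in $\mathcal A(N,\Delta)$ collapses to $w(\cdot)\,\partial^j/\partial\nu^j$, combined with the Green-type reduction $\beta_{j+2}(\phi)=-\frac{2}{j+2}\beta_j(\Delta\phi)$; and $w(\bar\alpha_n)$, $w(\bar Z_{n+1})$, which carry the real combinatorial burden. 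For the latter the paper tracks the integer sequences $T_{nj}=w(\bar P_{nj})$, $U_{nj}=w(\bar Q_{nj})$ built from \emph{barred} rescaled versions of $R_{n+j,j}$, $S_{n+j,j}$, and proves the needed summation identities by induction against the Hankel determinant evaluations of Lemma \ref{mainlemma}. You located the combinatorial difficulty correctly, but the scalarization has to be done on the barred side from the outset; tracking $w(R_{kj})$, $w(S_{kj})$ themselves yields information that the target formula cannot be recovered from.
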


We observe the following easy consequence of Proposition \ref{level} and Theorem \ref{combinatorial}.

\begin{prop}\label{inductive} Assume that $\Omega$ is a domain such that the invariants $B_2,\dots,B_k$ are constant on $\bd\Omega$ for some $k\geq 2$. Then the functions $\eta,\derive{\eta}{\nu},\dots, \deriven{k-2}{\eta}{\nu}$ are  constant on $\bd\Omega$.

\end{prop}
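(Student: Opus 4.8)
The plan is to prove Proposition \ref{inductive} by induction on $k$, using Proposition \ref{level}(b) together with the non-vanishing of the weights $w(\bar D_m)$ guaranteed by Theorem \ref{combinatorial}. The base case and the inductive step are structurally the same, so I would phrase the induction carefully: the inductive hypothesis at level $m$ should be the conjunction ``$\eta,\derive{\eta}{\nu},\dots,\deriven{m-2}{\eta}{\nu}$ are constant on $\bd\Omega$''.

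For the base case $k=2$: by hypothesis $B_2$ is constant on $\bd\Omega$, and from \eqref{bifour} (or \eqref{part}) we have $B_2=-\tfrac12\eta$, so $\eta$ is constant on $\bd\Omega$; this is exactly the claim ``$\eta,\dots,\deriven{k-2}{\eta}{\nu}$ constant'' for $k=2$ (the list reducing to $\eta$ alone).

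For the inductive step, suppose the statement holds for some $m$ with $2\le m<k$, i.e.\ $\eta,\derive{\eta}{\nu},\dots,\deriven{m-2}{\eta}{\nu}$ are all constant on $\bd\Omega$. Apply Proposition \ref{level}(b) with the integer parameter equal to $m-2$ (which is $\ge 0$), whose hypothesis ``$\eta$ and all its normal derivatives up to order $m-2$ are constant on $\bd\Omega$'' is precisely the inductive hypothesis. The conclusion gives, for all $y\in\bd\Omega$,
$$
B_{m+1}(y)=-w(\bar D_{m+1})\deriven{m-1}{\eta}{\nu}(y)+b_{m-2},
$$
with $b_{m-2}$ independent of $y$. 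Since $m+1\le k$, the invariant $B_{m+1}$ is constant on $\bd\Omega$ by hypothesis, and by Theorem \ref{combinatorial} the weight $w(\bar D_{m+1})$ is nonzero; hence $\deriven{m-1}{\eta}{\nu}$ is constant on $\bd\Omega$. Combined with the inductive hypothesis, this shows that $\eta,\derive{\eta}{\nu},\dots,\deriven{m-1}{\eta}{\nu}$ are all constant on $\bd\Omega$, which is the statement at level $m+1$. Continuing the induction up to $m+1=k$ yields the proposition.

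I do not expect a genuine obstacle here: all the real work has already been isolated into Proposition \ref{level} (the translation of $B_{k+3}$ into a normal derivative of $\eta$ modulo constants) and Theorem \ref{combinatorial} (the nonvanishing of the weights). The only point requiring a little care is bookkeeping of the index shift between the invariant $B_{k+3}$ as stated in Proposition \ref{level}(b) and the derivative $\deriven{k+1}{\eta}{\nu}$, and making sure the induction variable ranges correctly so that at each step the hypothesis of Proposition \ref{level} is met; once the indexing is lined up, the argument is immediate. Note also that the hypothesis that $\Omega$ has the constant flow property is not needed for Proposition \ref{inductive} itself — only the constancy of $B_2,\dots,B_k$ is used — which is why this formulation will be convenient both for Theorem \ref{general} (where constancy of all $B_k$ follows from Proposition \ref{bardkey}) and for the quantitative statements in Theorem \ref{hciso}.
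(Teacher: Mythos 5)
Your proof is correct and follows essentially the same route as the paper: a one-step induction whose engine is Proposition \ref{level}(b) applied at parameter $m-2$ (so $B_{m+1}=-w(\bar D_{m+1})\deriven{m-1}{\eta}{\nu}+\mathrm{const}$), together with the nonvanishing of $w(\bar D_{m+1})$ from Theorem \ref{combinatorial}; the only cosmetic difference is that you run the induction internally over $m\le k$ for a fixed $k$, whereas the paper phrases it as an induction on $k$ over the proposition itself, but the index bookkeeping and the logic are identical.
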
 

\begin{proof} The proof is by induction on $k$. The statement is true for $k=2$, because $B_2=-\frac 12\eta$ (see table \eqref{bifour}). Assume that it is true for the integer $k$, and assume that $B_2,\dots,B_{k+1}$ are constant. We have to show that 
$\deriven{k-1}{\eta}{\nu}$ is constant. Now, we know that 
$\eta,\derive{\eta}{\nu},\dots, \deriven{k-2}{\eta}{\nu}$ are all constant by the inductive hypothesis; by  Proposition \ref{level}b  we have:
$$
B_{k+1}=a_k\deriven{k-1}{\eta}{\nu}+c_k,
$$
where $a_k$ is the weight of $-\bar D_{k+1}$, which is non-zero by Theorem \ref{combinatorial}, and $c_k$ is constant. This immediately shows that $\deriven{k-1}{\eta}{\nu}$ must also be constant and the assertion follows.

\end{proof}

We can now prove Theorem \ref{general} and Theorem \ref{hciso}. 

%%%%%%%%%%%%%%%%%%%%%%%%%%

\subsection{Proof of Theorem \ref{general}}\label{pgeneral}

Assume that $\Omega$ has the constant flow property. Recall that we need to show that the $k$-th normal derivative of the function $\eta$ is constant on $\bd\Omega$ for all $k\geq 0$. By Proposition \ref{bardkey} the function $B_k$ is constant on $\bd\Omega$ for all $k$. Then, the conclusion follows immediately from Proposition \ref{inductive}.

%%%%%%%%%%%%%%%%%%%%%%%%%%%%%%%%

\subsection{Proof of Theorem \ref{hciso}}\label{phciso}

We consider the heat content $\tilde H_f(t)$ with boundary data $f\in C^{\infty}_0(\bd\Omega)$, as defined in \eqref{hcb}. From \eqref{tildehprime} we know that
$$
\tilde H'_f(t)=\int_{\bd\Omega}f\derive{u_t}{\nu}.
$$
Substituting for $\bd u_t/\bd\nu$ the expansion  in \eqref{uflow},  integrating the asymptotic series term to term, 
and recalling that $\tilde D_k1=(1+\frac k2)D_{k+2}1=(1+\frac k2)B_{k+2}$ 
we see that, as $t\to 0$:
\begin{equation}\label{tildeh}
\tilde H_f(t)\sim \sum_{k=2}^{\infty}\int_{\bd\Omega}fB_k\cdot t^{\frac k2}
\end{equation}
 (the series starts with $k=2$ because $f$ integrates to zero on $\bd\Omega$).

\smallskip

{\it Proof of (a).} By assumption, $\tilde H_f(t)=o(t)$ as $t\to 0$ hence, from \eqref{tildeh}: $\int_{\bd\Omega}fB_2=0$. This happens for all $f\in C^{\infty}_0(\bd\Omega)$ hence $B_2$ must be constant. From table \eqref{bifour} we see that $\eta=-2B_2$ must be constant, hence, as $\bd\Omega$ is compact and has constant mean curvature, $\Omega$ must be a ball by the Alexandrov theorem, which is valid in $\real n$ and ${\bf H}^n$.

\smallskip

{\it Proof of (b).} We have by assumption that $\Omega\subseteq \sphere n$ and that $\tilde H_f(t)\sim o(t^{k/2})$ for some $k\geq 2$, and for all $f\in C^{\infty}_0(\bd\Omega)$. We must then prove that all mean curvatures of order $\leq k-1$ are constant on $\bd\Omega$. 

\medskip

{\bf Step 1.} {\it The invariants $B_2,\dots,B_k$ are constant on $\bd\Omega$.}

\medskip

This follows immediately because, from \eqref{tildeh} and our assumption, $\int_{\bd\Omega}fB_j=0$ for all $j\leq k$. 
\medskip

{\bf Step 2.} {\it The functions $\eta,\derive{\eta}{\nu},\dots, \deriven{k-2}{\eta}{\nu}$ are  constant on $\bd\Omega$.}

\medskip

This follows from Step 1 and Proposition \ref{inductive}.

\smallskip

Define a field of endomorphisms $S$ of the tangent space of the tubular neighborhood $U$ by:
$$
S(X)=-\nabla_X\nu,
$$
where $\nu=\nabla\rho$ is everywhere normal to the equidistants. Note that $S(\nu)=-\nabla_{\nu}\nu$ is identically zero because $\nu$ is of unit length and tangent to the normal geodesics. 
When restricted to the tangent space of $\rho^{-1}(r)$ (in particular, the tangent space of $\bd\Omega$)  the endomorphism $S$ is just the shape operator, hence, on $U$, one has $\eta=\tr S$. 
A straightforward calculation shows that
\begin{equation}\label{normal}
\nabla_{\nu}S=S^2+R_{\nu},
\end{equation}
where $R_{\nu}(X)=R(\nu,X)\nu$ and $R$ is the Riemann tensor of the ambient manifold $\Omega$. 

\medskip

{\bf Step 3.} {\it The traces $\tr(S), \tr(S^2),\dots, \tr(S^{k-1})$ are constant on $\bd\Omega$.}

\medskip

Since $B_2=-\frac 12\eta$ is constant on $\bd\Omega$, also $\tr(S)=\eta$ is constant on $\bd\Omega$. Now $\nabla_{\nu}$ commutes with taking traces, and one has  $\tr(R_{\nu})={\rm Ric}(\nu,\nu)$, where ${\rm Ric}$ denotes the Ricci tensor. If $\Omega\subseteq \sphere{n}$, taking traces in \eqref{normal}  one then gets:
$$
\derive{\eta}{\nu}=\tr(S^2)+n-1.
$$
Using  the Leibniz rule we have, from \eqref{normal}: $\derive{\tr(S^j)}{\nu}=j\tr(S^{j+1})+j\tr(S^{j-1})$ and we easily arrive at:
$$
\deriven{m}{\eta}{\nu}=m!\,\tr(S^{m+1})+\sum_{j=0}^{m-1}a_j\tr(S^j)
$$
where each $a_j$ is a  constant. Induction on $k$ shows that Step 2 implies Step 3. 

\medskip

{\bf Step 4.} {\it The mean curvatures $E_1,\dots,E_{k-1}$ are constant on $\bd\Omega$.}

\medskip

This fact is a consequence of Step 3 and the well-known Newton's identities, which relate the $r$-th elementary symmetric function of the eigenvalues $k_1,\dots,k_{n-1}$ with the sums of the powers of the eigenvalues, that is, the trace of the powers of $S$:
$
\tr(S^m)=\sum_{j=1}^{n-1}k_j^m.
$
Precisely, one has, for all $k$:
$$
kE_k=\sum_{i=1}^k(-1)^{i-1}E_{k-i}\tr(S^i).
$$
Hence we have a scheme like this:
$$
\begin{aligned}
E_1&=\tr(S)\\
E_2&=E_1\tr(S)-\tr(S^2)\\
E_3&=E_2\tr(S)-E_1\tr(S^2)+\tr(S^3)\\
\dots&
\end{aligned}
$$
showing that, if $\tr(S^i)$ is constant for all $i\leq k-1$, then also $E_1,\dots,E_{k-1}$ are constant.

\smallskip

{\it Proof of (c)}. It simply follows from the fact that, if $\tilde H_f(t)=o(t^{\frac n2})$ for all $f\in C^{\infty}_0(\bd\Omega)$, then taking $k=n$ in the previous statement (b) we see that all mean curvatures $E_1,\dots,E_{n-1}$ are constant, hence the characteristic polynomial of $S$ is constant on $\bd\Omega$, which implies that $\bd\Omega$ is isoparametric by Theorem \ref{cartan}. The proof is now complete.

%%%%%%%%%%%%%%%%%%%%%%%%%%%%%%%%%

\subsection{Proof of Proposition \ref{level}}\label{plevel}

The proposition could be proved by writing the operator $A$ in  local normal coordinates $(r,y)$ around a boundary point $y\in\bd\Omega$. However, we give here a direct argument. 
Before giving the proof,   we introduce the following terminology. 

\nero {\it We say that  $\phi\in C^{\infty}(U)$ has {\rm level  $k$}  if  $k$ is the largest integer (including possibly $k=+\infty$)  such that
 $\phi,\derive{\phi}{\nu},\dots,\deriven{k}{\phi}{\nu}$ restrict to constant functions on $\bd\Omega$.}
 
 \smallskip
 
 By convention, if $\phi|_{\bd\Omega}$ is non constant then we say that $\phi$ has level $-\infty$. Note that if  $\phi|_{\bd\Omega}$ is constant then $\phi$ has level $k\geq 0$,  while  radial functions on $U$ have level $+\infty$.  In fact, if $\phi=f\circ\rho$ for a smooth function $f:[0,\epsilon)\to \reals$, then $\deriven k{\phi}{\nu}=f^{(k)}\circ\rho$ which, restricted to ${\bd\Omega}$, takes the constant value $f^{(k)}(0)$. 
 
\smallskip

By using Taylor expansion in the normal direction based at a generic point of the boundary, we easily obtain the following characterization.

\begin{lemme}\label{levelk} A function $\phi\in C^{\infty}(U)$ has level at least $k$ if and only if there exist smooth functions $f:[0,\epsilon)\to \reals$ and $\psi\in C^{\infty}(U)$ such that, on $U$:
$$
\phi=f\circ\rho+\rho^{k+1}\psi.
$$
In that case $f(r)$ will be a polynomial of degree $k$. 
\end{lemme}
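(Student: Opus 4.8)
The plan is to work in the normal coordinates $(r,y)$ on $U$ given by the diffeomorphism $\Phi(r,y)=\exp_y(r\nu(y))$ of $[0,\epsilon)\times\bd\Omega$ onto $U$, under which $\rho$ becomes the coordinate $r$; since $r\mapsto\Phi(r,y)$ is the unit-speed geodesic normal to $\bd\Omega$ at $y$ with velocity $\nu$, the operator $\derive{\cdot}{\nu}$ corresponds to $\partial/\partial r$. Setting $\tilde\phi(r,y)=\phi(\Phi(r,y))$, the hypothesis that $\phi$ has level at least $k$ translates into the statement that, for each $j=0,\dots,k$, the function $y\mapsto\partial_r^j\tilde\phi(0,y)$ is a constant $a_j$ on $\bd\Omega$.

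For the forward implication I would set $f(r)=\sum_{j=0}^k\frac{a_j}{j!}r^j$, a polynomial of degree at most $k$ with $f^{(j)}(0)=a_j$, and look at $g(r,y)=\tilde\phi(r,y)-f(r)$, which by construction satisfies $\partial_r^j g(0,y)=0$ for every $y$ and every $j\le k$. Taylor's formula with integral remainder in the $r$-variable, treating $y$ as a smooth parameter, then yields
$$
g(r,y)=r^{k+1}\psi_0(r,y),\qquad\psi_0(r,y)=\frac{1}{k!}\int_0^1(1-s)^k\,\partial_r^{k+1}g(sr,y)\,ds,
$$
where $\psi_0$ is smooth on $[0,\epsilon)\times\bd\Omega$ by differentiation under the integral sign. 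Pulling back via $\Phi$ and defining $\psi=\psi_0\circ\Phi^{-1}\in C^\infty(U)$ gives $\phi=f\circ\rho+\rho^{k+1}\psi$ on $U$, with $f$ a polynomial of degree at most $k$, as required.

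For the converse, if $\phi=f\circ\rho+\rho^{k+1}\psi$ with $f$ smooth and $\psi\in C^\infty(U)$, then $\tilde\phi(r,y)=f(r)+r^{k+1}\tilde\psi(r,y)$ in normal coordinates, and because $r^{k+1}$ vanishes to order $k+1$ at $r=0$ we get $\partial_r^j\tilde\phi(0,y)=f^{(j)}(0)$ for all $j\le k$, independent of $y$; hence $\phi,\derive{\phi}{\nu},\dots,\deriven{k}{\phi}{\nu}$ are constant on $\bd\Omega$, i.e. $\phi$ has level at least $k$. To recover the last assertion — that $f$ can be taken to be a polynomial of degree at most $k$ — I would write $f(r)=P_k(r)+r^{k+1}h(r)$ with $P_k$ its degree-$k$ Taylor polynomial at $0$ and $h$ smooth, and absorb $h\circ\rho$ into $\psi$; this replaces $f$ by $P_k$ without changing $\phi$.

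The only delicate point is the smoothness of the remainder $\psi_0$ up to $r=0$ and jointly in $(r,y)$; this is precisely the parametrized version of Taylor's theorem with integral remainder (Hadamard's lemma), and presents no real difficulty, since $g$ is smooth on all of $[0,\epsilon)\times\bd\Omega$ and the remainder integral inherits that regularity. Everything else is bookkeeping around the coordinate identification $\derive{}{\nu}\leftrightarrow\partial/\partial r$.
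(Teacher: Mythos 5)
Your proposal is correct and takes essentially the same approach the paper indicates: the paper states the lemma follows easily "by using Taylor expansion in the normal direction based at a generic point of the boundary," and you carry this out precisely by passing to normal coordinates $(r,y)$ via $\Phi$, matching the Taylor polynomial of $\tilde\phi$ in $r$ to obtain $f$, and using the integral-remainder form of Taylor's theorem (Hadamard's lemma) for the smooth factor $\psi$.
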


We will also need the following fact. 
\begin{lemme}\label{obvious} Assume that $\eta$ has level at least $k$. If $\phi\in C^{\infty}(U)$ has level $h$, with $h\leq k$, then:

\item a) $N\phi$ has level at least $h-1$,

\item b) $\Delta_R\phi$ has level at least $h-2$,

\item c) $\Delta_T\phi$ has level at least $h$.

\smallskip

In particular, if $T$ is a polynomial of degree $m\leq h$ in the operators $N,\Delta_R,\Delta_T$  then $T\phi$ has level at least $h-m$.
\end{lemme}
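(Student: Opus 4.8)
The plan is to reduce the whole statement to the structural description in Lemma \ref{levelk} --- which characterizes ``level $\geq h$'' by the existence of a decomposition $\phi=f\circ\rho+\rho^{h+1}\psi$ with $f,\psi$ smooth --- together with two elementary observations. First, since $\rho$ is a distance function one has $\derive{\rho}{\nu}=\abs{\nabla\rho}^2=1$, so that $\derive{}{\nu}$ acts on a radial function $f\circ\rho$ as $f'\circ\rho$, and, directly from the definition of level, if $\phi$ has level $\geq m$ then $\derive{\phi}{\nu}$ has level $\geq m-1$. Second, level is subadditive under products: if $\phi_1$ has level $\geq a\geq 0$ and $\phi_2$ has level $\geq b\geq 0$, then $\phi_1\phi_2$ has level $\geq\min(a,b)$; this follows by writing $\phi_i=f_i\circ\rho+\rho^{a_i+1}\psi_i$ and expanding the product, since the term $(f_1f_2)\circ\rho$ is radial while each of the other three terms is divisible by $\rho^{\min(a,b)+1}$.

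For (a), write $\phi=f\circ\rho+\rho^{h+1}\psi$. Differentiating normally,
$$
\derive{\phi}{\nu}=f'\circ\rho+\rho^{h}\Big((h+1)\psi+\rho\,\derive{\psi}{\nu}\Big),
$$
a radial function plus one of the form $\rho^{h}\cdot(\text{smooth})$, hence of level $\geq h-1$. Since $\eta$ has level $\geq k\geq h$, the product estimate gives $\eta\phi$ of level $\geq\min(k,h)=h$, so $N\phi=2\derive{\phi}{\nu}-\eta\phi$ has level $\geq\min(h-1,h)=h-1$. For (b), start from $\Delta_R\phi=-\deriven{2}{\phi}{\nu}+\eta\derive{\phi}{\nu}$: from the computation just made $\derive{\phi}{\nu}$ has level $\geq h-1$, hence $\deriven{2}{\phi}{\nu}$ has level $\geq h-2$, while $\eta\derive{\phi}{\nu}$ has level $\geq\min(k,h-1)=h-1$ (using $h-1\leq k$); therefore $\Delta_R\phi$ has level $\geq h-2$.

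Part (c) is where the foliation by equidistants enters, but it is the easiest of the three and uses nothing about $\eta$: on $\rho^{-1}(r)$ the function $\rho$ equals the constant $r$, so $\Delta_T$ annihilates $f\circ\rho$ and pulls the scalar $\rho^{h+1}$ out of $\Delta_T(\rho^{h+1}\psi)$, giving $\Delta_T\phi=\rho^{h+1}\Delta_T\psi$; since $\Delta_T=\Delta-\Delta_R$ is a smooth differential operator on $U$, the function $\Delta_T\psi$ is smooth and Lemma \ref{levelk} gives level $\geq h$. For the last assertion I would induct on the degree $m$ of a monomial in $N,\Delta_R,\Delta_T$, applying (a)--(c) from the innermost operator outward; note that the argument above proves (a)--(c) whenever $\phi$ has level $\geq h$ with $h\leq k$, so the iteration is legitimate: each $N$ lowers the guaranteed level by at most $1$, each $\Delta_R$ by at most $2$, each $\Delta_T$ not at all, the intermediate guaranteed levels stay $\leq k$ (they start at $h\leq k$ and never increase), and once a guaranteed level drops below $0$ the remaining claims are automatic. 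Summing the drops over the $m$ factors gives level $\geq h-m$.

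No step is genuinely hard; the only point requiring care is the product estimate together with the bookkeeping that makes the hypothesis $h\leq k$ do precisely what is needed, namely prevent $\eta\phi$ and $\eta\,\derive{\phi}{\nu}$ from lowering the level below the bounds claimed in (a) and (b).
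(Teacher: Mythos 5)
Your proof is correct and follows essentially the same route as the paper's: decompose $\phi$ and $\eta$ via Lemma \ref{levelk}, compute $\derive{\phi}{\nu}$ explicitly, observe that $\Delta_T$ annihilates radial functions and pulls out the scalar $\rho^{h+1}$, and then induct for the last assertion. The only difference is one of exposition --- you make explicit the product estimate (that the level of $\phi_1\phi_2$ is at least $\min(a,b)$) and the bookkeeping for the induction, where the paper says ``one sees immediately'' and ``an obvious induction shows,'' so your version is a faithful, more detailed rendering of the same argument.
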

\begin{proof} The assumptions imply existence of smooth functions $f,g$ on $[0,\epsilon)$ and $\psi,\xi$ on $U$ such that
$$
\twosystem
{\phi=f\circ\rho+\rho^{h+1}\psi}
{\eta=g\circ\rho+\rho^{k+1}\xi}
$$
One has:
$$
\derive{\phi}{\nu}=f'\circ\rho+\rho^h\Big((h+1)\psi+\rho\derive{\psi}{\nu}\Big),
$$
and by the previous lemma $\derive{\phi}{\nu}$ has level at least $h-1$. Since $h\leq k$ one sees immediately that $\eta\phi$ has level at least $h$, hence $N\phi$ has level at least $h-1$, showing 1). Part 2) is proved similarly; as for part 3) just observe that, as radial functions are constant on the level surfaces of $\rho$, one has
$
\Delta_T\phi=\rho^{h+1}\Delta_T\psi
$
showing that $\Delta_T\phi$ has level at least $h$.

The last assertion is now clear, as an obvious induction shows. 
\end{proof}

We now can prove Proposition \ref{level}, part (a). Given an operator $A\in{\cal A}(N,\Delta)$ of homogeneous degree $k+1$, recall the splitting $\Delta=\Delta_R+\Delta_T$. Accordingly, we can split 
$$
A=A_R+A_T,
$$
where $A_R$ is a polynomial in $N$ and $\Delta_R$ and $A_T$ contains at least one $\Delta_T$. For example, if $A=N\Delta^2$ then $A_R=N\Delta_R^2$ and $A_T=N\Delta_R\Delta_T+N\Delta_T\Delta_R+N\Delta_T^2$. 

Now $A_R$ derives 
functions only radially, and has degree $k+1$: then, the highest order of the normal derivative occurring in $A\eta$ is $k+1$.
By definition of weight  one sees easily that, on $U$, one has:
$$
A_R\eta=w(A)\deriven{k+1}{\eta}{\nu}+P\Big(\eta,\derive{\eta}{\nu}, \dots,\deriven{k}{\eta}{\nu}\Big),
$$
where the second term in the right is a polynomial in the functions $\eta,\derive{\eta}{\nu}, \dots,\deriven{k}{\eta}{\nu}$. By our  assumption,  all these functions restrict to constants  on $\bd\Omega$. By taking the restriction to the boundary we see
$$
A_R\eta|_{\bd\Omega}=w(A)\deriven{k+1}{\eta}{\nu}|_{\bd\Omega}+c_k,
$$
where $c_k$ is constant. As $A\eta=A_R\eta+A_T\eta$, the final assertion  now follows by proving that
$$
A_T\eta|_{\bd\Omega}=0.
$$
In order to do that, first observe that $A_T$ is a sum of terms of type $B\Delta_TC$, where $B$ is a polynomial in $N$ and $\Delta_R$, and $C$ is a polynomial in $N,\Delta_R,\Delta_T$. If $B$ has degree $h$ (possibly $h=0$) then $C$ has degree $k-h-1$.
By assumption $\eta$ has level at least $k$, hence (by Lemma \ref{obvious}) $C\eta$ has level at least $k-(k-h-1)=h+1$  and,  by Lemma \ref{levelk}, there exist smooth functions $f$ on $[0,\epsilon)$ and $\psi$ on $U$ such that:
$$
C\eta=f\circ\rho+\rho^{h+2}\psi.
$$
Therefore
$
\Delta_TC\eta=\rho^{h+2}\Delta_T\psi
$
vanishes on the boundary together with all its normal derivatives up and including the order $h+1$. As $B$ has order $h$ one must have $B(\rho^{h+2}\Delta_T\psi)=0$ on the boundary, hence
$$
B\Delta_TC\eta|_{\bd\Omega}=0,
$$
as asserted. This proves part (a). 

For part (b), recall that $B_{k+3}=-\bar D_{k+3}\eta$. Hence it is enough to apply (a) to the operator $A=\bar D_{k+3}$, which is homogeneous of degree $k+1$.

%%%%%%%%%%%%%%%%%%%%%%%%%%%%%%%%%%%%%%%%%%%%%%%%%%%

\section{Theorem \ref{combinatorial}:  the two main steps and the proof}\label{prec}

At this point all results of the paper are proved, except Theorem \ref{combinatorial}. The proof of Theorem \ref{combinatorial} is combinatorial, and will take the rest of the paper. We have split the proof in several sections. 

First, recall the heat content operators $\{D_k\}$ as defined in Theorem \ref{savoone}, and recall the barred operators $\{\bar D_k\}$ defined by the rule \eqref{dec}:  $D_k=\bar D_kN+\tilde D_k\Delta$. If $w(\bar D_k)$ denotes the weight of $\bar D_k$, we need to prove the identities:
\begin{equation}\label{barweights}
\twosystem
{w(\bar D_{2n})=\frac{2}{4^n n!},}
{w(\bar D_{2n+1})=\dfrac{1}{\sqrt\pi}\cdot\dfrac{1}{2^{n-1}(2n+1)!!}}
\end{equation}
for all integers $n$. There are two main steps: in Proposition \ref{weightofd} we compute the weight of $D_k$, and in Proposition \ref{weightofalpha} we compute the weight of the barred operators $\bar\alpha_n$, $\bar Z_{n+1}$ defined in \eqref{za}. Assuming Propositions \ref{weightofd} and \ref{weightofalpha}  we prove Theorem \ref{combinatorial} later in this section.  The proof of Proposition \ref{weightofd} will then be given in Section \ref{pweightofd}, while the proof of Proposition \ref{weightofalpha} is more involved, and will be given  in the last sections of the paper (Sections 7,8 and the Appendix). 

\smallskip

As a first step we compute the weights of $\{D_k\}$, by a functorial argument. 
\begin{prop}\label{weightofd} One has, for all $n\geq 1$:
$$
\twosystem
{w(D_{2n})=\dfrac{1}{n!}}
{w(D_{2n+1})=\dfrac{1}{\Gamma (n+\frac 32)}=\dfrac 1{\sqrt\pi}\cdot\dfrac{2^{n+1}}{(2n+1)!!}}
$$
\end{prop}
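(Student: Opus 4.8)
The plan is to extract the weight of $D_k$ directly from the recursive formulas in \eqref{recurrence} by pushing the algebra homomorphism $w$ through them. Since $w$ is an algebra homomorphism on ${\cal A}(N,\Delta)$ with $w(N)=2$ and $w(\Delta)=-1$, applying $w$ turns the operator recurrences into scalar recurrences. However, there is a subtlety: $w$ records the coefficient of the highest-order normal derivative, which in algebraic terms is the coefficient of $N^{\deg}$ after rewriting; this is precisely captured by the substitution $N\mapsto 2,\ \Delta\mapsto -1$ provided the relevant quantities are homogeneous, which they are. So the first step is to observe that $w$ is well-defined and multiplicative, hence $w(D_{2n})$ and $w(D_{2n+1})$ satisfy the scalar images of \eqref{recurrence}.

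Next I would compute $w(\alpha_n)$ and $w(Z_{n+1})$. These are built from $R_{kj}, S_{kj}$ via \eqref{za}, so first I would apply $w$ to the recursion \eqref{rsone}: writing $r_{kj}=w(R_{kj})$, $s_{kj}=w(S_{kj})$, the rules become $r_{kj}=-(4-1)r_{k-1,j}+2s_{k-1,j}=-3r_{k-1,j}+2s_{k-1,j}$ and $s_{kj}=(-1)(2)r_{k-1,j}-(-1)s_{k-1,j}+2r_{k-1,j-1}=-2r_{k-1,j}+s_{k-1,j}+2r_{k-1,j-1}$, with $r_{00}=1, s_{00}=0$. One then solves this linear recursion — likely by generating functions in two variables, or by recognizing a closed form — to get $w(\alpha_n)$ and $w(Z_{n+1})$ after summing against the coefficients $\{a,b\}=\Gamma(a+b+\frac12)/((a+b)!\,\Gamma(a+\frac12))$. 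I expect this is exactly the content of the later Proposition \ref{weightofalpha} that this excerpt says will be proved in Sections 7--8 and the Appendix, so for the present proof I would simply cite it.

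The core of the present proof, then, is to feed $w(\alpha_n)$ (and $w(Z_{n+1})$, though for the \emph{unbarred} weight $Z_{n+1}$ enters $D_{2n+1}$ directly) into the scalar images of \eqref{recurrence} and verify by induction on $n$ that $w(D_{2n})=1/n!$ and $w(D_{2n+1})=1/\Gamma(n+\tfrac32)$. Concretely: $w(D_1)=2/\sqrt\pi=1/\Gamma(\tfrac32)$ checks the base case; then assuming the formula for all smaller indices, $w(D_{2n})=\frac1{\sqrt\pi}\sum_{i=1}^n\frac{\Gamma(i+\frac12)\Gamma(n-i+\frac12)}{n!}\,w(D_{2i-1})\,w(\alpha_{n-i})$ and $w(D_{2n+1})=\frac1{\sqrt\pi}w(Z_{n+1})+\frac1{\sqrt\pi}\sum_{i=1}^n\frac{i!\,\Gamma(n-i+\frac12)}{\Gamma(n+\frac32)}\,w(D_{2i})\,w(\alpha_{n-i})$. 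Substituting the inductive values reduces each to a hypergeometric-type summation identity — a Vandermonde/Gauss $\,_2F_1$ evaluation or a Beta-integral identity — which one verifies by a standard manipulation of Gamma functions. The identity $\frac1{\sqrt\pi}\cdot\frac{2^{n+1}}{(2n+1)!!}=\frac1{\Gamma(n+\frac32)}$ used in the statement is the elementary fact $\Gamma(n+\tfrac32)=\frac{(2n+1)!!}{2^{n+1}}\sqrt\pi$, so the two displayed forms for $w(D_{2n+1})$ are interchangeable.

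The main obstacle I anticipate is the bookkeeping in the inductive summation identities: once $w(\alpha_{n-i})$ and $w(Z_{n+1})$ are known in closed form, each line of \eqref{recurrence} collapses to a finite sum of ratios of factorials and half-integer Gamma values that must telescope or sum to the predicted value, and getting the normalizing constants ($2$'s, $\sqrt\pi$'s, double factorials) to match exactly is where the care is needed. The genuinely hard analytic input — computing $w(\alpha_n)$, equivalently solving the two-index linear recursion for $w(R_{kj}), w(S_{kj})$ and summing it against the $\{a,b\}$ coefficients — is isolated into Proposition \ref{weightofalpha}, which this proof takes as given; modulo that, the argument here is a clean two-case induction.
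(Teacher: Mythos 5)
Your approach is genuinely different from the paper's, and it contains a concrete error in what you plan to cite. The paper does not touch the recursion \eqref{recurrence} at all to prove this proposition. Instead it proves a clean integration-by-parts identity (Lemma \ref{downtwo}): for $\phi$ vanishing on $\bd\Omega$, $\int_{\bd\Omega}D_{j+2}\phi=-\tfrac{2}{j+2}\int_{\bd\Omega}D_j\Delta\phi$, obtained by differentiating the heat-content asymptotics in $t$ and using Green's formula. It then exploits the universality of the $D_k$ to specialize to $\Omega=[0,1]$, where $\eta=\Delta\rho\equiv 0$, so $N=2\,\bd/\bd\nu$, $\Delta=-\bd^2/\bd\nu^2$, and any homogeneous $A\in{\cal A}(N,\Delta)$ of degree $k$ acts as $A\phi=w(A)\,\bd^k\phi/\bd\nu^k$. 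Substituting into the lemma gives the two-step scalar recursion $w(D_{j+2})=\tfrac{2}{j+2}\,w(D_j)$, and with $w(D_1)=2/\sqrt\pi$, $w(D_2)=1$ the closed forms follow immediately. No combinatorics whatsoever is needed; the weight computation becomes a geometric fact about the one-dimensional domain.

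Your plan, by contrast, pushes $w$ through \eqref{recurrence} directly, which requires the closed-form values of $w(\alpha_n)$ and $w(Z_{n+1})$. Here is the gap: you propose to ``simply cite'' Proposition \ref{weightofalpha} for these, but that proposition gives $w(\bar\alpha_n)$ and $w(\bar Z_{n+1})$ --- the \emph{barred} operators of the decomposition $A=\bar A N+\tilde A\Delta$ --- not $w(\alpha_n)$ and $w(Z_{n+1})$. These are different numbers: for example $\alpha_0=\tfrac12 N$ gives $w(\alpha_0)=1$, whereas $\bar\alpha_0=\tfrac12 I$ gives $w(\bar\alpha_0)=\tfrac12$. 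The unbarred weights are never computed anywhere in the paper, so your route would require a new, separate combinatorial computation parallel to (but distinct from) the one carried out for the barred operators in Sections 7--8 and the Appendix. Indeed the paper's logical structure is the reverse of what you envisage: Proposition \ref{weightofd}, proved by the functorial argument above, is \emph{fed into} the barred recursion \eqref{barrecurrence} together with Proposition \ref{weightofalpha} in the proof of Theorem \ref{combinatorial}. So the functorial specialization to $[0,1]$ is not a shortcut around your approach --- it is what lets the paper avoid the extra combinatorial burden your plan would incur.
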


Proof: Section \ref{pweightofd}. 

Then, we compute the weights of the barred operators $\bar\alpha_n$ and $\bar Z_{n+1}$ (this is perhaps the main combinatorial difficulty). 

\begin{prop}\label{weightofalpha} One has:
$$
\threesystem
{w(\bar\alpha_0)=\frac 12,}
{w(\bar\alpha_n)=-\frac{3}{2^{n+1}(2n-1)!!}\quad\text{for all}\quad n\geq 1,}
{w(\bar Z_{n+1})=-\frac{1}{2^{n-1}(2n+1)!!}\quad\text{for all}\quad n\geq 1.}
$$
\end{prop}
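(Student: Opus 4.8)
The plan is to reduce the computation of $w(\bar\alpha_n)$ and $w(\bar Z_{n+1})$ to the weights of the barred building blocks $\overline{R_{kj}}$ and $\overline{S_{kj}}$, and then to a finite combinatorial sum. Since $A\mapsto\bar A$ is linear, applying it to \eqref{za} gives $\bar\alpha_n=\sum_{j=0}^{n+1}\{n,j\}\,\overline{S_{n+j,j}}$ and $\bar Z_{n+1}=\sum_{j=0}^{n}\{n+1,j-1\}\,\overline{R_{n+j,j}}$; applying the homomorphism $w$, it is then enough to compute the numbers $\sigma_{kj}:=w(\overline{S_{kj}})$ and $\rho_{kj}:=w(\overline{R_{kj}})$ for the index pairs $(n+j,j)$, and afterwards to evaluate $w(\bar\alpha_n)=\sum_j\{n,j\}\,\sigma_{n+j,j}$ and $w(\bar Z_{n+1})=\sum_j\{n+1,j-1\}\,\rho_{n+j,j}$.

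To find $\rho_{kj}$ and $\sigma_{kj}$, I would apply $\overline{(\cdot)}$ to the defining rule \eqref{rsone}, use the identity $\overline{AB}=A\overline{B}$ of \eqref{overline}, and then apply $w$ (using $w(N^2+\Delta)=3$, $w(\Delta N)=-2$, $w(N)=2$, $w(\Delta)=-1$). This yields the coupled linear recursion
$$
\rho_{kj}=-3\,\rho_{k-1,j}+2\,\sigma_{k-1,j},\qquad
\sigma_{kj}=-2\,\rho_{k-1,j}+\sigma_{k-1,j}+2\,\rho_{k-1,j-1}\qquad(k\ge 2),
$$
together with the $k=1$ data read off directly from $R_{10}=-(N^2+\Delta)$, $S_{10}=\Delta N$, $S_{11}=N$ and $R_{11}=0$ — that is, $\rho_{10}=-2$, $\sigma_{10}=-1$, $\sigma_{11}=1$, $\rho_{11}=0$ — and the vanishing conventions of \eqref{rsone}. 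The only delicate point is the degree-zero operator $R_{00}=I$, for which $\overline{AB}=A\overline{B}$ does not literally make sense; but $R_{00}$ enters the recursion only through the degree-one row $k=1$, whose barred weights have just been computed by hand, so this causes no trouble.

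Next I would solve this recursion and carry out the sums. Writing $v_{kj}=(\rho_{kj},\sigma_{kj})$, the recursion reads $v_{kj}=Mv_{k-1,j}+Pv_{k-1,j-1}$ with $M=\bigl(\begin{smallmatrix}-3&2\\-2&1\end{smallmatrix}\bigr)$ and $P=\bigl(\begin{smallmatrix}0&0\\2&0\end{smallmatrix}\bigr)$; since $M+I$ squares to zero, iterating gives explicit closed forms for $\rho_{n+j,j}$ and $\sigma_{n+j,j}$ (equivalently, the bivariate generating function $\sum_{k,j}\rho_{kj}x^ky^j$ is rational, and extracting its coefficients along the lines $k-j=n$ is elementary). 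Substituting these closed forms into $w(\bar\alpha_n)=\sum_j\{n,j\}\,\sigma_{n+j,j}$ and $w(\bar Z_{n+1})=\sum_j\{n+1,j-1\}\,\rho_{n+j,j}$, and writing $\{a,b\}=\Gamma(a+b+\tfrac12)/((a+b)!\,\Gamma(a+\tfrac12))$ as a ratio of double factorials, turns the asserted formulas into a short list of finite hypergeometric summation identities. These are precisely the Hankel-transform identities for the relevant numerical sequence recorded in Lemma \ref{mainlemma}, which follow from Tamm's results in \cite{Tamm} (worked out in the Appendix). A direct evaluation of the cases $n=0$ (where $\alpha_0=\tfrac12N$, so $w(\bar\alpha_0)=\tfrac12$) and $n=1$ (a short computation gives $w(\bar\alpha_1)=-\tfrac34$ and $w(\bar Z_2)=-\tfrac13$) pins down the normalizing constants and serves as a check against \eqref{list}.

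The steps up to the substitution are routine bookkeeping; the genuine difficulty is the last one. The coefficients $\{a,b\}$ are not multiplicative, the weighted diagonal sums do not telescope, and their partial sums admit no elementary closed form, so evaluating them forces one to recognize them as Hankel transforms of a fixed sequence and to invoke the determinant evaluations of Tamm — which is exactly why the proof of this proposition is deferred to Sections 7 and 8 and the Appendix.
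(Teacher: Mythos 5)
Your first two paragraphs are sound and are, modulo a cosmetic normalization, exactly the reduction the paper makes at the start of Section 7. Applying $\overline{(\cdot)}$ to \eqref{rsone}, using $\overline{AB}=A\overline{B}$, and then applying the algebra homomorphism $w$ (with $w(N^2+\Delta)=3$, $w(\Delta N)=-2$) does yield your coupled recursion for $\rho_{kj}=w(\overline{R_{kj}})$ and $\sigma_{kj}=w(\overline{S_{kj}})$, your degree-one initial data are correct, and your handling of $R_{00}=I$ is fine. The paper performs the same reduction but first rescales, setting $P_{nj}=-\tfrac{2}{4^j}R_{n+j,j}$ and $Q_{nj}=-\tfrac{4}{4^j}S_{n+j,j}$ as in \eqref{recurrencepq}, precisely so that the Gamma coefficients $\{a,b\}$ of \eqref{za} collapse into the single integer sequence $a_m=\binom{2m-1}{m}$ and the target becomes the clean pair $\sum_{j=0}^n a_{n+j}T_{nj}=1$, $\sum_{j=0}^{n+1}a_{n+j}U_{nj}=3$ with $T_{nj}=w(\bar P_{nj})$, $U_{nj}=w(\bar Q_{nj})$; this is Proposition \ref{tu}, and your $\rho_{n+j,j},\sigma_{n+j,j}$ are $-\tfrac{4^j}{2}T_{nj}$ and $-\tfrac{4^j}{4}U_{nj}$. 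Your observation that $(M+I)^2=0$ and your $n=0,1$ spot checks are also correct.

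The genuine gap is in your last step, and it is the crux. You propose (i) to iterate the recursion $v_{kj}=Mv_{k-1,j}+Pv_{k-1,j-1}$ to explicit closed forms for $\rho_{n+j,j}$, and (ii) to ``recognize'' the resulting weighted diagonal sums as the Hankel-transform identities of Lemma~\ref{mainlemma}. Neither half is viable as sketched. For (i), your $M$ and $P$ do not commute ($MP\ne PM$), so the iterates are sums of non-commutative ordered products over lattice paths, and the nilpotency of $M+I$ (or of $P$) does not by itself collapse these to a usable formula. For (ii), Lemma~\ref{mainlemma} records determinant evaluations, $\det A_n=(-1)^n$ and $\det A_n'=(-1)^n(n+1)$; these are not summation identities and cannot be ``invoked'' directly against a sum such as $\sum_j a_{n+j}T_{nj}$. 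What the paper actually does in Section~8 is prove, by a single simultaneous induction on $n$, the full vector identities $A_n\vec T_n=(0,\ldots,0,1)^T$ and $B_{n+1}\vec U_n=(0,\ldots,0,3)^T$ (i.e.\ the sums against all shifted rows $a_{k+j}$ and against the top row $4^j$, not just the diagonal one). The decisive step, in the proof of (3.1), is that $B_{n+1}\vec T_{n+1}=0$ forces $\vec T_{n+1}$ to lie in the one-dimensional kernel of the $(n+1)\times(n+2)$ Hankel block $B_{n+1}$, hence to be proportional to the vector of signed maximal minors $(-1)^j\det B_{n+1}^{(j)}$; the proportionality constant is then pinned to $1$ by $T_{n+1,n+1}=-1$ together with $\det A_n=(-1)^n$, and only at that point does Laplace expansion along the last row of $A_{n+1}$ turn the diagonal sum into $\det A_{n+1}\cdot(-1)^{n+1}=1$. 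Lemma~\ref{mainlemma} is used exactly there, and again (via $\det A_n'$) to nail the constant $3$ in (3.2). This kernel/cofactor identification of $\vec T_n$ — not a closed form for $T_{nj}$, and not a hypergeometric evaluation — is the idea your plan is missing, and without it Tamm's determinant values give you no purchase on the sums.
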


Proof: Sections \ref{pwa}, \ref{ptu} and the Appendix. 

%%%%%%%%%%%%%%%%%%%%%%%%%%%%%%%%%%%%%%

\subsection{Proof of Theorem \ref{combinatorial}} \label{proofofc}

Taking the bar on both sides of the recurrence scheme \eqref{recurrence} we see, thanks to rule $\overline{AB}=A\bar B$, that $\bar D_1=0$ and, for all $n\geq 1$:
\begin{equation}\label{barrecurrence}
\twosystem
{\bar D_{2n}=\dfrac{1}{\sqrt\pi}\sum_{i=1}^n\dfrac{\Gamma(i+\frac 12)\Gamma(n-i+\frac 12)}{n!}D_{2i-1}\bar\alpha_{n-i}}
{\bar D_{2n+1}=\dfrac{1}{\sqrt\pi}\bar Z_{n+1}+\dfrac{1}{\sqrt\pi}\sum_{i=1}^n\frac{i!\Gamma(n-i+\frac 12)}{\Gamma(n+\frac32)}D_{2i}\bar\alpha_{n-i}}
\end{equation}

We first prove that
$$
w(\bar D_{2n})=\dfrac{2}{4^n n!}.
$$
 First, recall the recurrence law for the Gamma function:
\begin{equation}\label{gamma}
\Gamma(\frac 12)=\sqrt\pi, \quad \Gamma(1+x)=x\Gamma (x), \quad\text{so that}\quad \Gamma(n+\frac 12)=\sqrt\pi\cdot\dfrac{(2n-1)!!}{2^n}.
\end{equation}
From Proposition \ref{weightofalpha}, $w(\bar\alpha_n)$ can be re-written:
\begin{equation}\label{cone}
w(\bar\alpha_n)=-\dfrac{3\sqrt\pi}{2^{2n+1}\Gamma(n+\frac 12)}.
\end{equation}
By taking weights in the first relation of \eqref{barrecurrence} we obtain:
\begin{equation}\label{contwo}
\begin{aligned}
\sqrt\pi n!w(\bar D_{2n})&=\sum_{i=1}^n\Gamma(i+\frac 12)\Gamma(n-i+\frac 12)w(D_{2i-1})
w(\bar\alpha_{n-i})\\
&=\sum_{i=1}^{n-1}\Gamma(i+\frac 12)\Gamma(n-i+\frac 12)w(D_{2i-1})
w(\bar\alpha_{n-i})+\Gamma(n+\frac 12)\Gamma(\frac 12)w(D_{2n-1})
w(\bar\alpha_{0})
\end{aligned}
\end{equation}
By  Proposition \ref{weightofd} we get
$
w(D_{2i-1})=\dfrac{1}{\Gamma(i+\frac 12)}
$
and from \eqref{cone} we obtain for $i\leq n-1$:
$$
w(\bar\alpha_{n-i})=-\dfrac{3\sqrt\pi}{2^{2(n-i)+1}\Gamma(n-i+\frac 12)}=
-\dfrac{\sqrt\pi\cdot 3\cdot 4^i}{2^{2n+1}\Gamma(n-i+\frac 12)}.
$$
Hence from \eqref{contwo}
$$
\sqrt\pi n!w(\bar D_{2n})=-\dfrac{3\sqrt\pi}{2^{2n+1}}\sum_{i=1}^{n-1}4^i+\dfrac{\sqrt\pi}{2}.
$$
As
$\sum_{i=1}^{n-1}4^i=\dfrac{4^n-4}{3}$ we get
$
w(\bar D_{2n})=\dfrac{2}{4^n n!}
$
which is the first identity  in  Theorem \ref{combinatorial}. 

The second assertion is proved similarly, by taking weights in the second relation of \eqref{barrecurrence}. We omit the details, which are straightforward.

%%%%%%%%%%%%%%%%%%%%%%%%%%%%%%%%%%%%%%%%%%%%%%%%%%%%

\section{Theorem \ref{combinatorial}, step 1: proof of Proposition \ref{weightofd}}\label{pweightofd}

We start from the following lemma. 

\begin{lemme}\label{downtwo} Let $\Omega$ be any domain, let $\phi\in C^{\infty}(\Omega)$ and assume that $\phi=0$ on $\bd\Omega$. 
Let $\{D_k\}$ be the sequence of heat content operators. Then, for all $j\geq 1$:
\begin{equation}\label{downtwo}
\int_{\bd\Omega}D_{j+2}\phi=-\frac{2}{j+2}\int_{\bd\Omega}D_j\Delta\phi.
\end{equation}
\end{lemme}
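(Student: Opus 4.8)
The plan is to exploit the defining property of the heat content operators from Theorem \ref{savoone}, namely $\beta_k(\phi)=\int_{\bd\Omega}D_k\phi$, where $\beta_k(\phi)$ is the coefficient of $t^{k/2}$ in the small-time asymptotic expansion \eqref{as} of the heat content $H_\phi(t)=\int_\Omega\phi_t$. The idea is to compare the asymptotic expansion attached to the initial datum $\phi$ with the one attached to a suitably transformed initial datum, using the hypothesis $\phi|_{\bd\Omega}=0$ to connect the two.

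Concretely, I would start from the heat equation $\Delta\phi_t+\derive{\phi_t}{t}=0$ and differentiate the heat content in time: since $\phi_t$ satisfies Dirichlet conditions and $u_0=\phi$, one has $H_\phi'(t)=\int_\Omega\derive{\phi_t}{t}=-\int_\Omega\Delta\phi_t$. Now the function $\psi_t\doteq\Delta\phi_t$ is itself the solution of the heat equation with initial data $\Delta\phi$ \emph{and} Dirichlet boundary conditions --- the latter precisely because $\phi=0$ on $\bd\Omega$ forces $\phi_t$ to vanish on $\bd\Omega$ for all $t$, hence $\Delta\phi_t=\Delta_T\phi_t$ is determined by tangential data on the boundary, and in fact one checks $\psi_t$ solves \eqref{hefi} with $\psi_0=\Delta\phi$ (this is where the hypothesis $\phi=0$ on $\bd\Omega$ is essential; without it, $\Delta\phi_t$ need not vanish on $\bd\Omega$ and the clean identification fails). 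Therefore $H_\phi'(t)=-\int_\Omega(\Delta\phi)_t=-H_{\Delta\phi}(t)$.

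Next I would plug in the asymptotic expansions on both sides. Differentiating \eqref{as} for $\phi$ term by term gives $H_\phi'(t)\sim-\sum_{k=1}^\infty\frac{k}{2}\beta_k(\phi)t^{k/2-1}$, while \eqref{as} for $\Delta\phi$ gives $H_{\Delta\phi}(t)\sim\int_\Omega\Delta\phi-\sum_{k=1}^\infty\beta_k(\Delta\phi)t^{k/2}$. Matching coefficients of $t^{j/2}$ for $j\geq1$: the relation $H_\phi'(t)=-H_{\Delta\phi}(t)$ forces $-\frac{j+2}{2}\beta_{j+2}(\phi)=\beta_j(\Delta\phi)$ (one should note the index shift: the coefficient of $t^{j/2}$ on the left comes from the $k=j+2$ term of the differentiated series, with factor $\frac{j+2}{2}$, since $\frac{k}{2}-1=\frac{j}{2}$ when $k=j+2$), i.e. $\beta_{j+2}(\phi)=-\frac{2}{j+2}\beta_j(\Delta\phi)$. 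Translating back via Theorem \ref{savoone}, $\int_{\bd\Omega}D_{j+2}\phi=-\frac{2}{j+2}\int_{\bd\Omega}D_j\Delta\phi$, which is exactly \eqref{downtwo}. One small point worth checking is the $j=1$ case, where the left side of $H_\phi'(t)$ contributes the $k=3$ coefficient with factor $3/2$, so the relation reads $\beta_3(\phi)=-\frac23\beta_1(\Delta\phi)$, consistent with the general formula; and the constant term $\int_\Omega\Delta\phi$ in $H_{\Delta\phi}(t)$ matches the (absent) $t^{-1/2}\cdot t^{1/2}$... actually it matches nothing on the left since $H_\phi'$ has no constant term, which forces $\int_\Omega\Delta\phi=\int_{\bd\Omega}\derive{\phi}{\nu}$ to be handled --- but by Green's formula $\int_\Omega\Delta\phi=\int_{\bd\Omega}\derive{\phi}{\nu}$, and matching the $t^0$ coefficients shows $H_\phi'(t)$ has leading term of order $t^{-1/2}$ from $\beta_1(\phi)$, so one should restrict attention to $j\geq1$ as stated.

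The main obstacle I anticipate is the rigorous justification that the asymptotic series for $H_\phi(t)$ may be differentiated term by term in $t$ to yield the asymptotic series for $H_\phi'(t)$, and the careful bookkeeping of the index shift and the multiplicative factor $\frac{j+2}{2}$. Term-by-term differentiability of such heat-trace-type asymptotic expansions is standard (it follows from uniform estimates on remainders, e.g. via the parametrix construction underlying \cite{Savo1}), so I would either cite this or give a one-line remark; the index arithmetic is routine but is the place where sign and constant errors creep in, so I would double-check it against the known low-order operators in \eqref{table}, e.g. testing $j=1$: $\int_{\bd\Omega}D_3\phi=-\frac23\int_{\bd\Omega}D_1\Delta\phi=-\frac{4}{3\sqrt\pi}\int_{\bd\Omega}\Delta\phi$ when $\phi=0$ on $\bd\Omega$, and comparing with $D_3=\frac{1}{6\sqrt\pi}(N^2-4\Delta)$ using $N\phi=2\derive{\phi}{\nu}$ on $\bd\Omega$, which should reduce consistently.
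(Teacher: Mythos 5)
Your overall strategy is the same as the paper's (differentiate the heat content asymptotics in $t$, identify $H_\phi'(t)=-H_{\Delta\phi}(t)$, match coefficients of $t^{j/2}$, translate via Theorem \ref{savoone}), and your index bookkeeping, the factor $\tfrac{j+2}{2}$, and the $j=1$ sanity check are all correct. But there is a gap in the step where you establish $H_\phi'(t)=-H_{\Delta\phi}(t)$.

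You argue that $\psi_t:=\Delta\phi_t$ is the Dirichlet heat evolution of $\Delta\phi$, and you justify the Dirichlet boundary condition for $\psi_t$ by writing ``$\Delta\phi_t=\Delta_T\phi_t$ is determined by tangential data on the boundary.'' This is false: by the splitting \eqref{splitting}, $\Delta\phi_t=-\deriven{2}{\phi_t}{\nu}+\eta\derive{\phi_t}{\nu}+\Delta_T\phi_t$, and the radial terms $-\deriven{2}{\phi_t}{\nu}+\eta\derive{\phi_t}{\nu}$ do \emph{not} vanish on $\bd\Omega$ merely because $\phi_t$ does (indeed $\derive{\phi_t}{\nu}$ is the heat flow, which is generically nonzero). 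That $\Delta\phi_t$ does vanish on $\bd\Omega$ for $t>0$ is true, but for a different reason: differentiate the identity $\phi_t\equiv 0$ on $\bd\Omega$ in $t$ to get $\derive{\phi_t}{t}|_{\bd\Omega}=0$, and then use the heat equation $\Delta\phi_t=-\derive{\phi_t}{t}$. Moreover, even after fixing the boundary condition, the identification $\Delta\phi_t=(\Delta\phi)_t$ requires checking that the initial datum of $\Delta\phi_t$ is really $\Delta\phi$ (in the $L^2$ sense), which is precisely the place where $\phi|_{\bd\Omega}=0$ is used — via the vanishing of boundary terms in Green's formula when computing Fourier coefficients — and this point is asserted rather than proved in your sketch.

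The paper sidesteps the semigroup-commutation issue entirely by working directly with integrals: using the duality $\int_\Omega\phi_t=\int_\Omega\phi\,u_t$ from \eqref{phiut}, differentiating in $t$ to get $H_\phi'(t)=-\int_\Omega\phi\Delta u_t$, and applying Green's formula twice (the boundary terms drop because both $\phi$ and $u_t$ vanish on $\bd\Omega$) to land directly on $-\int_\Omega(\Delta\phi)\,u_t=-H_{\Delta\phi}(t)$. That route never needs the pointwise identity $\Delta\phi_t=(\Delta\phi)_t$ and is cleaner; I would recommend replacing your semigroup argument with it, or at least repairing the justification along the lines above.
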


\begin{proof} Recall that $\phi_t$ is the solution of the heat equation on $\Omega$ with initial data $\phi$ and Dirichlet boundary conditions. One knows from Section \ref{ahf}  that, as $t\to 0$, the heat content has  an asymptotic series
$$
\int_{\Omega}\phi_t\sim \int_{\Omega}\phi-\sum_{k=1}^{\infty}\beta_k(\phi)t^{\frac k2}.
$$
Differentiating both sides with respect to $t$ we obtain:
\begin{equation}\label{asone}
\frac{d}{dt}\int_{\Omega}\phi_t\sim-\sum_{k=1}^{\infty}\frac k2\beta_k(\phi)t^{\frac {k-2}2}\sim
-\sum_{j=-1}^{\infty}\frac {j+2}2\beta_{j+2}(\phi)t^{\frac {j}2}
\end{equation}
On the other hand we also know that $\int_{\Omega}\phi_t=\int_{\Omega}\phi u_t$ (see \eqref{phiut}). Therefore, as $\phi$ and $u_t$ vanish on the boundary:
\begin{equation}\label{astwo}
\begin{aligned}
\frac{d}{dt}\int_{\Omega}\phi_t &=\frac{d}{dt}\int_{\Omega}\phi u_t \\
&=-\int_{\Omega}\phi\Delta u_t\\
&=-\int_{\Omega}\Delta\phi \cdot u_t+\int_{\bd\Omega}u_t\derive{\phi}{\nu}-\int_{\bd\Omega}\derive{u_t}{\nu}\phi\\
&=-\int_{\Omega}\Delta\phi \cdot u_t\\
&=-\int_{\Omega}(\Delta\phi)_t\\
&\sim -\int_{\Omega}\Delta\phi +\sum_{j=1}^{\infty}\beta_j(\Delta\phi)t^{\frac {j}2}
\end{aligned}
\end{equation}
We now equate the  asymptotic series in \eqref{asone} and \eqref{astwo}: as $\phi$ vanishes on the boundary, one easily verifies that $\beta_1(\phi)=0, \beta_2(\phi)=\int_{\bd\Omega}\bd{\phi}/\bd{\nu}=\int_{\Omega}\Delta\phi$ and, for $j\geq 1$:
$$
-\dfrac{j+2}{2}\beta_{j+2}(\phi)=\beta_j(\Delta\phi).
$$
Recalling from Theorem \ref{savoone} that $\beta_j(\phi)=\int_{\bd\Omega}D_j\phi$ we get the assertion.
\end{proof} 

We can now prove the proposition. The crucial observation is that the coefficients of $D_k$ are universal, and do not depend on the domain $\Omega$: to compute the weight of $D_k$ we can just work on $\Omega=[0,1]$. Note that then $\bd\Omega=\{0,1\}$, and that the radial  vector field $\nu$, restricted to the neighborhood $U=\Omega\setminus\{\frac 12\}$ of $\bd\Omega$ where $\rho$ is regular,  is given by:
$$
\nu=\twosystem{\dfrac{d}{dx}\quad\text{on}\quad [0,\frac12)}
{-\dfrac{d}{dx}\quad\text{on}\quad (\frac12,1].}
$$
As the distance function $\rho$ is linear on $U$, we have $\eta=\Delta\rho=0$, hence 
$$
N\phi=2\derive{\phi}{\nu}, \quad \Delta\phi=-\deriven{2}{\phi}{\nu}
$$
and one immediately sees that, if $A\in{\cal A}(N,\Delta)$ is homogeneous of degree $k$, then:
$$
A\phi=w(A)\deriven{k}{\phi}{\nu}.
$$
Now, as $D_{j+2}$ is homogeneous of degree $j+1$:
\begin{equation}\label{dtwon}
\int_{\bd\Omega}D_{j+2}\phi=w(D_{j+2})\int_{\bd\Omega}\deriven{j+1}{\phi}{\nu}.
\end{equation}
On the other hand,  assuming that $\phi=0$ on $\bd\Omega$, we have by the lemma:
\begin{equation}\label{dtagain}
\begin{aligned}
\int_{\bd\Omega}D_{j+2}\phi &=-\dfrac{2}{j+2}\int_{\bd\Omega}D_{j}\Delta\phi\\
&=-\dfrac{2 }{j+2}w(D_j)\int_{\bd\Omega}\deriven{j-1}{\Delta\phi}{\nu}\\
&=\dfrac{2 }{j+2}w(D_j)\int_{\bd\Omega}\deriven{j+1}{\phi}{\nu}
\end{aligned}
\end{equation}
By \eqref{dtwon} and \eqref{dtagain} we conclude  that, for all $\phi\in C^{\infty}(\Omega)$ which  vanish on the boundary one has:
$$
\Big(w(D_{j+2})-\frac 2{j+2}w(D_{j})\Big)\int_{\bd\Omega}\deriven{j+1}{\phi}{\nu}=0.
$$
We can always choose $\phi$ so that the integral on the right does not vanish; hence, for all $j$:
$$
w(D_{j+2})=\frac 2{j+2}w(D_{j}).
$$
 As $w(D_2)=1$ (see table \eqref{table}) we obtain $w(D_{2n})=\frac{1}{n!}$ as asserted. Similarly, since  $w(D_1)=\frac{2}{\sqrt\pi}$ we obtain
$w(D_{2n+1})=\frac{1}{\Gamma(n+\frac32)}.$ The proof is complete.

%%%%%%%%%%%%%%%%%%%%%%%%%%%%%%%%%%%%%%%%%%%%%%%%%%%%%%%%%%%%

\section{Theorem \ref{combinatorial}, step 2: proof of  Proposition \ref{weightofalpha}}\label{pwa}

Perhaps, this is the step which is more involved from a combinatorial point of view. 
We first make a change of variables in the original definition of the operators $R_{nj},S_{nj}$ in  \eqref{rsone} to simplify the subsequent calculations. 
Then, the proof of Proposition \ref{weightofalpha} will reduce to the proof  of Proposition \ref{tu} below, which in turn will be given in the last section. 

\smallskip

For non-negative integers $n$ and $j$, let us introduce new operators $P_{nj},Q_{nj}\in {\cal A}(N,\Delta)$ as follows:
$$
\twosystem
{P_{nj}=-\frac{2}{4^j}R_{n+j,j},}
{Q_{nj}=-\frac{4}{4^j}S_{n+j,j}.}
$$
Then, one easily shows that the defining  scheme \eqref{rsone} takes the form:
\begin{equation}\label{recurrencepq}
\threesystem
{P_{nj}=-(N^2+\Delta)P_{n-1,j}+\frac12NQ_{n-1,j}}
{Q_{nj}=2\Delta NP_{n-1,j}-\Delta Q_{n-1,j}+\frac 12NP_{n,j-1}}
{P_{00}=-2I,\quad Q_{00}=0.}
\end{equation}
with the understanding that $P_{nj}$ and $Q_{nj}$ are zero whenever $n$ or $j$ is negative. 
Introduce the sequence
\begin{equation}\label{eien}
a_n=\binom{2n-1}{n}= \dfrac{2^{n-1}(2n-1)!!}{n!}\quad\text{for $n\geq 1$}.
\end{equation}
The first few terms are 
$
1,3,10,35,126,462,1716,6735,\dots
$
Recalling the relations \eqref{gamma} of the Gamma function  it is straightforward to verify that the operators $Z_{n+1}$ and $\alpha_n$ of \eqref{za} can be written, for $n\geq 1$:
\begin{equation}\label{newzalpha}
\twosystem
{Z_{n+1}=-\dfrac{1}{2^{n-1}(2n+1)!!}\sum_{j=0}^na_{n+j}P_{nj}}
{\alpha_n=-\dfrac{1}{2^{n+1}(2n-1)!!}\sum_{j=0}^{n+1}a_{n+j}Q_{nj}.}
\end{equation}
For $n=0$, it can be directly checked from the original recursive scheme \eqref{za} that 
$$
\alpha_0=\frac 12 N
$$ 
(in fact $\{0,1\}=\frac 12, S_{00}=0, S_{11}=N$). Then $\bar\alpha_0=\frac 12I$ and $w(\bar\alpha_0)=\frac 12$.

\smallskip

Then,  we need to verify the value of  $w(\bar\alpha_n)$ and $w(\bar Z_{n+1})$ given in Proposition \ref{weightofalpha} only when $n\geq 1$. By taking bars in \eqref{newzalpha} and then taking weights we see
\begin{equation}\label{barzalpha}
\twosystem
{w(\bar Z_{n+1})=-\dfrac{1}{2^{n-1}(2n+1)!!}\sum_{j=0}^na_{n+j}w(\bar P_{nj})}
{w(\bar\alpha_n)=-\dfrac{1}{2^{n+1}(2n-1)!!}\sum_{j=0}^{n+1}a_{n+j}w(\bar Q_{nj}).}
\end{equation}
From the scheme \eqref{recurrencepq} we see that
$$
\twosystem{P_{01}=0}{P_{10}=2(N^2+\Delta)}, \quad \twosystem{Q_{01}=-N}{Q_{10}=-4\Delta N}
$$
so that
$$
\twosystem{\bar P_{01}=0}{\bar P_{10}=2N}, \quad \twosystem{\bar Q_{01}=-I}{\bar Q_{10}=-4\Delta}.
$$
Taking the bar in the scheme \eqref{recurrencepq} we  see that $\bar P_{nj}$ and $\bar Q_{nj}$ are uniquely determined by the scheme:
\begin{equation}\label{recurrencebar}
\threesystem
{\bar P_{nj}=-(N^2+\Delta)\bar P_{n-1,j}+\frac12N\bar Q_{n-1,j}}
{\bar Q_{nj}=2\Delta N\bar P_{n-1,j}-\Delta \bar Q_{n-1,j}+\frac 12N\bar P_{n,j-1}}
{\bar P_{01}=0,\quad \bar P_{10}=2N,\quad \bar Q_{01}=-I, \quad \bar Q_{10}=-4\Delta.}
\end{equation}
For non-negative integers $n$ and $j$ we now define real numbers (actually integers) by:
$$
T_{nj}\doteq w(\bar P_{nj}), \quad U_{nj}\doteq w(\bar Q_{nj}).
$$
Recalling that taking  weights is an algebra homomorphism, we see from \eqref{recurrencebar} that $T_{nj}$ and $U_{nj}$ must satisfy the recurrence scheme:
\begin{equation}\label{recurrencetu}
\threesystem
{T_{nj}=-3T_{n-1,j}+U_{n-1,j}}
{U_{nj}=-4T_{n-1,j}+U_{n-1,j}+T_{n,j-1}}
{\twovector{T_{01}}{U_{01}}=\twovector 0{-1},\quad \twovector{T_{10}}{U_{10}}=\twovector 44}
\end{equation}
It is now clear that Proposition \ref{weightofalpha} is implied by \eqref{barzalpha} and the following result.

\begin{prop}\label{tu} Let $T_{nj}$ and $U_{nj}$ be the integers defined by the scheme \eqref{recurrencetu} and let $\{a_n\}$ be the sequence defined in \eqref{eien}. Then, for all $n\geq 1$ one has:
$$
\sum_{j=0}^na_{n+j}T_{nj}=1, \quad \sum_{j=0}^{n+1}a_{n+j}U_{nj}=3.
$$
\end{prop}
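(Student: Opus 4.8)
The plan is to repackage the doubly-indexed recursion \eqref{recurrencetu} as a one-parameter family of $2\times2$ matrix recursions and then to read off the two sums of the statement as residues against the generating function of $\{a_m\}$. Since for each fixed $n$ only finitely many of the $T_{nj}$, $U_{nj}$ are nonzero, I would first introduce the polynomials $f_n(y)=\sum_j T_{nj}y^j$ and $g_n(y)=\sum_j U_{nj}y^j$. The scheme \eqref{recurrencetu} then reads $f_n=-3f_{n-1}+g_{n-1}$ and $g_n=-4f_{n-1}+g_{n-1}+yf_n$, i.e.
$$\twovector{f_n}{g_n}=M(y)\twovector{f_{n-1}}{g_{n-1}},\qquad M(y)=\twomatrix{-3}{1}{-(4+3y)}{1+y},$$
with base data $f_1(y)=4-y$, $g_1(y)=4+3y-y^2$ read off from the initial conditions in \eqref{recurrencetu}. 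The structural point is that $\det M(y)=1$ and $\tr M(y)=y-2$, so by Cayley--Hamilton $M(y)^m=p_m(y)M(y)-p_{m-1}(y)I$ with $p_0=0$, $p_1=1$, $p_{m+1}=(y-2)p_m-p_{m-1}$ (shifted Chebyshev polynomials of the second kind). Hence $f_n$ and $g_n$ are explicit $\{p_{n-1},p_{n-2}\}$-combinations of the fixed low-degree polynomials $f_1,f_2,g_1,g_2$.

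Next I would turn the weighted sums into coefficient extractions. With $a_0:=0$ one has $\sum_{m\ge 1}a_m x^m=\tfrac12(1-4x)^{-1/2}-\tfrac12$, hence $\tfrac{1}{\sqrt{1-4/y}}=\sum_{k\ge0}\binom{2k}{k}y^{-k}$ as a Laurent series in $1/y$. Writing the tail
$$\sum_{j\ge0}a_{n+j}y^{-j}=\frac{y^n}{2\sqrt{1-4/y}}-\frac{y^n}{2}-\sum_{m=1}^{n-1}a_m y^{\,n-m},$$
and observing that the last two groups of terms involve only strictly positive powers of $y$, so they contribute nothing to the constant term when multiplied by $f_n$ (resp.\ $g_n$), one gets
$$\sum_{j=0}^{n}a_{n+j}T_{nj}=\bigl[y^0\bigr]\!\Bigl(\frac{y^n}{2\sqrt{1-4/y}}\,f_n(y)\Bigr),\qquad \sum_{j=0}^{n+1}a_{n+j}U_{nj}=\bigl[y^0\bigr]\!\Bigl(\frac{y^n}{2\sqrt{1-4/y}}\,g_n(y)\Bigr).$$
Thus Proposition \ref{tu} becomes the assertion that these two residues equal $1$ and $3$, respectively, for all $n\ge 1$.

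There are two complementary ways to finish. One is the uniformization $w=(1-4/y)^{-1/2}$: one finds $\tr M(y)=\tfrac{2(w^2+1)}{w^2-1}$, so the eigenvalues of $M(y)$ are $\tfrac{w+1}{w-1}$ and $\tfrac{w-1}{w+1}$, whence $M(y)^{n-1}$ — and therefore $f_n$, $g_n$ — are rational functions of $w$; transporting the $y$-residue to a $w$-residue (the Jacobian is elementary) leaves a rational-function residue which one computes to get the constants. This is the organization the paper announces: after the reduction above one peels off a family of one-variable Hankel-transform identities for $\{a_m\}$ (Lemma \ref{mainlemma}), which can be extracted from Tamm's evaluations \cite{Tamm} in the Appendix. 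A more self-contained route is to work with the shifted sums $t_n^{(s)}=\sum_j a_{n+s+j}T_{nj}$ and $u_n^{(s)}=\sum_j a_{n+s+j}U_{nj}$, on which \eqref{recurrencetu} closes, namely $t_n^{(s)}=-3t_{n-1}^{(s+1)}+u_{n-1}^{(s+1)}$ and $u_n^{(s)}=-4t_{n-1}^{(s+1)}+u_{n-1}^{(s+1)}+t_n^{(s+1)}$, with base case $t_1^{(s)}=4a_{s+1}-a_{s+2}$ (which one recognizes as the Catalan number $C_{s+1}$) and $u_1^{(s)}=4a_{s+1}+3a_{s+2}-a_{s+3}$; one then guesses the hypergeometric closed forms of $t_n^{(s)}$ and $u_n^{(s)}$ — ratios of factorials times a central binomial coefficient — verifies the guess by induction on $n$ (a finite set of binomial identities), and specializes to $s=0$ to read off $t_n^{(0)}=1$, $u_n^{(0)}=3$.

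The hard part will be precisely this last evaluation step: proving the Hankel-transform identities of Lemma \ref{mainlemma}, equivalently computing the $w$-residue, equivalently establishing the hypergeometric guess. Everything upstream — the $SL_2$ structure of $M(y)$, the Chebyshev closed form for $M(y)^{n-1}$, and the rewriting of the two weighted sums as residues against $(1-4/y)^{-1/2}$ — is essentially formal bookkeeping. The genuine content is the large cancellation that collapses the alternating $a_{n+j}$-weighted sums of the Chebyshev-built coefficient vectors $f_n$, $g_n$ down to the bare constants $1$ and $3$, independently of $n$; producing that cancellation is exactly what the continued-fraction and orthogonal-polynomial machinery for $1/\sqrt{1-4x}$ underlying Tamm's work is built to do.
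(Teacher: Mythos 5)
Your setup is correct and takes a genuinely different route from the paper's. Packaging the double-index scheme \eqref{recurrencetu} as an $\mathrm{SL}_2$ matrix iteration on $(f_n,g_n)^{\mathsf T}$ with $\det M(y)=1$, $\tr M(y)=y-2$, and then rewriting the two target sums as constant-term extractions against $y^n/\bigl(2\sqrt{1-4/y}\bigr)$, is an economical and correct reformulation; I checked $f_1=4-y$, $g_1=4+3y-y^2$, the Cayley--Hamilton/Chebyshev identity for $M(y)^m$, and the eigenvalue formulas under $w=(1-4/y)^{-1/2}$. The paper instead casts the claims as the linear systems $A_n\vec T_n=(0,\dots,0,1)^{\mathsf T}$ and $B_{n+1}\vec U_n=(0,\dots,0,3)^{\mathsf T}$, proves a six-part family of identities by induction on $n$, and invokes the Hankel determinant evaluations of Lemma \ref{mainlemma} via Cramer's rule and Laplace expansion at the two critical steps. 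Your generating-function/residue framing exposes structure (the $\mathrm{SL}_2$ and Chebyshev facts) that is invisible in the paper's treatment and could in principle eliminate the appeal to Tamm.

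The gap is exactly the one you flag yourself: the terminal evaluation --- computing the $w$-residue, or establishing the guessed hypergeometric closed forms for $t_n^{(s)}$ and $u_n^{(s)}$ --- is described but not carried out. Without it, the proposal is a correct reduction of Proposition \ref{tu} to another, unproved, statement. You also call the two possible finishes ``equivalent'' to proving Lemma \ref{mainlemma}; they are not. Either finish, if completed, would yield the constants $1$ and $3$ directly and would bypass Lemma \ref{mainlemma} (and hence Tamm) altogether, whereas the paper genuinely needs the determinant values because its inductive step hinges on a Cramer-rule argument for pinning down the one-dimensional solution space of $B_{n+1}\vec T_{n+1}=0$. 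So: the approach is sound and arguably cleaner, but as written the argument stops short of proving the proposition; one of the two proposed finishes must actually be executed.
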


%%%%%%%%%%%%%%%%%%%%%%%%%%%%%%%%%%%%%%%%%%%%%%%%%%%

\section{Last step: proof of Proposition \ref{tu}}\label{ptu}

The proof depends on a  combinatorial result (see Lemma \ref{mainlemma}), which can be proved in the framework of orthogonal polynomials and Hankel transforms. For the proof, we refer to the Appendix (Section \ref{appendix}) where we will use the results  in  \cite{Tamm} (see also \cite {C-M} for related work with the sequence of Catalan numbers). 

For the sequence 
\begin{equation}\label{an}
\{a_n\}=\Big\{\binom{2n-1}{n}\Big\}_{n\geq 1}=\{1,3,10,35,126,462,1716,6735,\dots\}
\end{equation} 
defined in \eqref{eien}  introduce the infinite matrices:
$$
A=\matrice\quattro 1 4 {4^2}{\cdots}\quattro{a_1}{a_2}{a_3}{\cdots}\quattro{a_2}{a_3}{a_4}{\cdots}
\quattro{\vdots}{\vdots}{\vdots}{\ddots}\ok,\quad
A'=\matrice\quattro 1 4 {4^2}{\cdots}\quattro{a_2}{a_3}{a_4}{\cdots}\quattro{a_3}{a_4}{a_5}{\cdots}
\quattro{\vdots}{\vdots}{\vdots}{\ddots}\ok.
$$
Note that the first row involves the powers of $4$ and that $A'$ corresponds to the shifted sequence $\{a_2,a_3,\dots\}$. Let  $A_n$ (resp. $ A'_n$) be the square matrix of order $n+1$ in the upper left-hand corner of $A$ (resp. $A'$) so that, for example:
$$
A_1=\twomatrix 14{a_1}{a_2}, \quad A_2=\threematrix 14{4^2}{a_1}{a_2}{a_3}{a_2}{a_3}{a_4}, \quad \dots
$$ 
 The following identities will be proved in the Appendix (Section \ref{appendix}).

\begin{lemme}\label{mainlemma} For all $n\geq 1$ one has:
$$
\det A_n=(-1)^{n} \quad\text{and}\quad \det A'_n=(-1)^{n}(n+1).
$$
\end{lemme}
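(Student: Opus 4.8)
The plan is to reduce both determinants, by a single column reduction, to classical Hankel determinants of (shifted) Catalan numbers. The bridge is the elementary binomial identity
\[
4a_k-a_{k+1}=C_k\qquad(k\ge 1),
\]
where $C_k=\frac{1}{k+1}\binom{2k}{k}$ is the $k$-th Catalan number; since $a_k=\binom{2k-1}{k}=\tfrac12\binom{2k}{k}$ for $k\ge 1$, this is just $2\binom{2k}{k}-\tfrac12\binom{2k+2}{k+1}=\bigl(2-\tfrac{2k+1}{k+1}\bigr)\binom{2k}{k}=\tfrac{1}{k+1}\binom{2k}{k}$. Note also that the exceptional first row of $A$ and $A'$ obeys $4\cdot 4^{k-1}-4^{k}=0$.

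First I would perform, on $A_n$, the column operations ``replace column $j$ by $4\cdot(\text{column }j-1)-(\text{column }j)$'' for $j=n,n-1,\dots,1$ in this decreasing order, leaving column $0$ fixed. Executed in decreasing order, each step involves only columns not yet modified, and amounts to rescaling one column by $-1$ and then adding a multiple of another column; hence the determinant gets multiplied by exactly $(-1)^n$. By the identities above, the first row becomes $(1,0,\dots,0)$, while for $i,j\ge 1$ the $(i,j)$-entry becomes $4a_{i+j-1}-a_{i+j}=C_{i+j-1}$. Expanding along the first row and reindexing,
\[
\det A_n=(-1)^n\det\bigl(C_{i+j+1}\bigr)_{0\le i,j\le n-1}.
\]
The very same operations applied to $A'_n$ (whose $(i,j)$-entry for $i\ge 1$ is $a_{i+j+1}$) give $\det A'_n=(-1)^n\det\bigl(C_{i+j+2}\bigr)_{0\le i,j\le n-1}$.

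It then remains to quote the classical Hankel determinant evaluations for shifted Catalan numbers,
\[
\det\bigl(C_{i+j+1}\bigr)_{0\le i,j\le m}=1,\qquad
\det\bigl(C_{i+j+2}\bigr)_{0\le i,j\le m}=m+2\qquad(m\ge 0),
\]
which, with $m=n-1$, yield $\det A_n=(-1)^n$ and $\det A'_n=(-1)^n(n+1)$, as claimed. These are exactly the sort of Hankel transforms treated in \cite{Tamm} (the underlying sequence $\{C_k\}$ being the moment sequence of $\frac{1}{2\pi}\sqrt{(4-x)/x}\,dx$ on $[0,4]$); alternatively they follow from the Lindstr\"om--Gessel--Viennot lemma for non-intersecting lattice paths, or from Dodgson condensation together with an auxiliary family of one-column-shifted determinants. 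This is the piece it is natural to isolate in the Appendix.

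The main obstacle is the second Catalan evaluation $\det(C_{i+j+2})_{0\le i,j\le m}=m+2$: unlike the celebrated $\det(C_{i+j})=1$, this is not a one-line fact and needs either the orthogonal-polynomial/continued-fraction machinery or a careful lattice-path count --- hence the recourse to \cite{Tamm}. A secondary point needing care is the sign bookkeeping in the column reduction: the operations must be carried out in decreasing order of the column index (so that each ``column $j-1$'' invoked is still in its original form), and one should verify that the resulting $n$ sign flips combine to precisely $(-1)^n$; a direct check for $n=1,2$ is reassuring.
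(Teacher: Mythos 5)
Your proof is correct, and it takes a genuinely different route from the paper's. The paper views $\det A_n$ and $\det A'_n$ as evaluations at $x=4$ of the polynomials $P_n(x)$, $P_n^{(1)}(x)$ attached to the sequence $c_m=\binom{2m+1}{m}$; it then normalizes to the monic orthogonal polynomials $t_n(x)$, $t_n^{(1)}(x)$, extracts the three-term recurrence coefficients from Tamm's continued-fraction formulas, and solves the scalar recurrence satisfied by $t_n(4)$ and $t_n^{(1)}(4)$. Your argument instead demystifies the role of $4$: the column reduction $\text{col}_j\mapsto 4\,\text{col}_{j-1}-\text{col}_j$ (in decreasing order, picking up $(-1)^n$) kills the exceptional first row and, via the identity $4a_k-a_{k+1}=C_k$, turns the lower block into a pure shifted-Catalan Hankel matrix; the lemma then reduces to the two classical evaluations $\det(C_{i+j+1})_{0\le i,j\le n-1}=1$ and $\det(C_{i+j+2})_{0\le i,j\le n-1}=n+1$. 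I checked the identity $4a_k-a_{k+1}=C_k$, the sign bookkeeping (the decreasing order really is needed so each $\text{col}_{j-1}$ is still unmodified), the reindexing, and both Catalan evaluations for small $n$: everything is in order. The trade-off is explicit and you name it honestly: your route buys conceptual clarity and isolates the bordered-Hankel structure, but it pushes the real work into $\det(C_{i+j+2})=n+1$, which, like the paper's appeal to Tamm, ultimately requires either the orthogonal-polynomial / continued-fraction machinery or a nontrivial lattice-path (Lindstr\"om--Gessel--Viennot) count; so the amount of external input is comparable, just packaged differently.
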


We also need the following  lemma, which follows easily by induction using the recursive scheme defining the numbers $T_{nj}$ and $U_{nj}$ in \eqref{recurrencetu}, and for which we omit the proof. 

\begin{lemme}\label{omitproof} For all $n\geq 1$ one has:
$$
T_{nn}=U_{n,n+1}=-1, \quad T_{n0}=(-1)^{n+1}4n, \quad U_{n0}=(-1)^{n+1}4(2n-1).
$$
Moreover $T_{nj}\ne 0$ only when $0\leq j\leq n$ and $U_{nj}\ne 0$ only when $0\leq j\leq n+1$.
\end{lemme}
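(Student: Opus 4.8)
The plan is a single induction on $n\ge 1$ that proves all six assertions of the lemma at once: the four corner values $T_{nn}=-1$, $U_{n,n+1}=-1$, $T_{n0}=(-1)^{n+1}4n$, $U_{n0}=(-1)^{n+1}4(2n-1)$, together with the two support statements $T_{nj}=0$ for $j>n$ and $U_{nj}=0$ for $j>n+1$. These cannot be separated, because each row is built from the previous one through \eqref{recurrencetu} and, within a row, the $U$-recurrence reaches back to the $T$-entries, so all six facts must be carried along simultaneously. Before starting I would record the bottom row (that is, $n=0$): reading \eqref{recurrencetu} with the stated convention that $T_{mj}=U_{mj}=0$ once $m<0$ or $j<0$, one finds $T_{0j}=0$ for all $j$ and $U_{0j}=0$ for all $j$ with the single exception of the prescribed $U_{01}=-1$; and I would note that \eqref{recurrencetu} is valid at every lattice point $(n,j)$ with $n\ge 1$ apart from $(1,0)$, where the prescribed data $(T_{10},U_{10})=(4,4)$ override it.

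For the base case $n=1$ I would simply run \eqref{recurrencetu} along the first row using the bottom-row data: this gives $T_{11}=-3T_{01}+U_{01}=-1$ and $T_{1j}=0$ for $j\ge 2$, then $U_{12}=-4T_{02}+U_{02}+T_{11}=-1$ and $U_{1j}=0$ for $j\ge 3$, while $T_{10}=U_{10}=4$ are the prescribed values; all of this agrees with the claimed formulas at $n=1$ (in particular $(-1)^{2}4\cdot 1=4$ and $(-1)^{2}4(2\cdot1-1)=4$).

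For the inductive step, assume the lemma holds for $n-1$ with $n\ge 2$. I would first dispose of the $T$-row: for $j>n$ both $T_{n-1,j}$ and $U_{n-1,j}$ vanish by the inductive support hypothesis, so $T_{nj}=-3T_{n-1,j}+U_{n-1,j}=0$; at $j=n$ the same recurrence gives $T_{nn}=-3T_{n-1,n}+U_{n-1,n}=0+(-1)=-1$, using $T_{n-1,n}=0$ and the inductive value $U_{n-1,n}=U_{(n-1),(n-1)+1}=-1$. With the $T$-row support now in hand, the $U$-row support follows the same way from $U_{nj}=-4T_{n-1,j}+U_{n-1,j}+T_{n,j-1}$ (for $j>n+1$ all three terms vanish), and then $U_{n,n+1}=-4T_{n-1,n+1}+U_{n-1,n+1}+T_{n,n}=0+0+(-1)=-1$. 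Finally the $j=0$ entries obey the closed two-step linear recursion $\twovector{T_{n0}}{U_{n0}}=\twomatrix{-3}{1}{-4}{1}\twovector{T_{n-1,0}}{U_{n-1,0}}$ (the term $T_{n,-1}$ in the $U$-recurrence is zero); substituting the inductive closed forms $T_{n-1,0}=(-1)^{n}4(n-1)$ and $U_{n-1,0}=(-1)^{n}4(2n-3)$ and simplifying gives $T_{n0}=(-1)^{n}4(-n)=(-1)^{n+1}4n$ and $U_{n0}=(-1)^{n}4(1-2n)=(-1)^{n+1}4(2n-1)$, completing the induction.

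I do not expect a genuine obstacle here; the argument is pure bookkeeping on a two-variable recursion. The only points needing attention are the ordering inside the inductive step — the $U$-row facts must be derived after the $T$-row support and the value $T_{nn}$, since the $U$-recurrence uses $T_{n,j-1}$ from the current row — and the mild caveat that \eqref{recurrencetu} must not be invoked at the corner $(1,0)$ (nor its $U$-part at $(0,1)$), where the prescribed initial data differ from what the recurrence would produce. One could alternatively diagonalize $\twomatrix{-3}{1}{-4}{1}$ to extract the $j=0$ formulas, but checking the stated closed forms by induction is quicker.
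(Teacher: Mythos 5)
Your proof is correct and carries out precisely the induction that the paper gestures at but omits ("follows easily by induction using the recursive scheme"). Your care with the base-case overrides at $(1,0)$ and $(0,1)$ is warranted — the naive iteration of \eqref{recurrencetu} from any choice of $T_{00},U_{00}$ cannot reproduce both $T_{10}=4$ and $U_{01}=-1$, so the prescribed data genuinely take precedence there — and the ordering within the inductive step (settling the $T$-row before the $U$-row, since $U_{nj}$ reads $T_{n,j-1}$ from the current row) is exactly right.
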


We will prove, by induction on $n$, the following set of identities. Note that identities $(c.1)$ and $(c.2)$ are precisely the statement we want to show (that is, Proposition \ref{tu}). 
$$
\threesystem
{\sum_{j=0}^{n}4^jT_{nj}=\sum_{j=0}^{n+1}4^jU_{nj}=0 \quad & (a.1), (a.2)}
{\sum_{j=0}^{n}a_{k+j}T_{nj}=\sum_{j=0}^{n+1}a_{k+j}U_{nj}=0\quad\text{for all $k=1,\dots,n-1$,}
\quad & (b.1), (b.2)}
{\sum_{j=0}^{n}a_{n+j}T_{nj}=1,\quad \sum_{j=0}^{n+1}a_{n+j}U_{nj}=3. \quad &(c.1), (c.2)}
$$

Let us introduce a useful notation. We let $B_n$ be the $n\times (n+1)$ matrix obtained by deleting the last row from $A_n$, and $B_n^{(j)}$ the $n\times n$ matrix obtained by deleting the $(j+1)$-th column from $B_n$ (that is, the column containing $4^j$). Then, in particular:
$$
B_n^{(n)}=A_{n-1}.
$$
For example:
$$
A_2=\threematrix 14{4^2}{a_1}{a_2}{a_3}{a_2}{a_3}{a_4},\, 
B_2=\matrice \tre 14{4^2}\tre {a_1}{a_2}{a_3}\ok, 
$$
and
$$
B_2^{(0)}=\twomatrix 4{4^2}{a_2}{a_3}, \quad B_2^{(1)}=\twomatrix 1{4^2}{a_1}{a_3}, \quad B_2^{(2)}=\twomatrix 14{a_1}{a_2}=A_1.
$$
Laplace rule applied to the last row of $A_n$ gives, since $\det A_n=(-1)^n$:
\begin{equation}\label{lr}
\sum_{j=0}^{n}(-1)^ja_{n+j}\det B_n^{(j)}=1.
\end{equation}

Next, define the following vectors (in $\real{n+1}$ and $\real{n+2}$ respectively):
$$
\vec T_n=\fourvector{T_{n0}}{T_{n1}}{\vdots}{T_{nn}},
\quad \vec U_n=\fourvector{U_{n0}}{U_{n1}}{\vdots}{U_{n,n+1}}.
$$
It is clear that the identities (a.1), \dots, (c.2) are equivalent in vectorial form to:
\begin{equation}\label{vectorial}
A_n\vec T_n=\fourvector {0}{\vdots}{0}{1}, \quad B_{n+1}\vec U_n=\fourvector {0}{\vdots}{0}{3}.
\end{equation}

Then, let us prove \eqref{vectorial} by induction on $n$. We first  verify it for $n=1$. From the recursive scheme \eqref{recurrencetu} we see that
$\vec T_1=\twovector{4}{-1}$ and $ \vec U_1=\threevector 43{-1}$.
Hence:
$$
A_1\vec T_1=\twomatrix 1413\twovector 4{-1}=\twovector 01, 
\quad B_2\vec U_1=\matrice \tre 14{16}\tre 13{10}\ok\threevector 43{-1}=\twovector 0{3}.
$$
which shows the assertion. As a double-check, we verify it also for $n=2$:
$$
A_2\vec T_2=\matrice \tre 14{16}\tre 13{10}\tre 3{10}{35}\ok\threevector {-8}{6}{-1}=\threevector 001,
\quad B_3\vec U_2=\matrice\quattro 14{16}{64}\quattro 13{10}{35}\quattro 3{10}{35}{126}\ok
\fourvector {-12}{-1}{5}{-1}=\threevector 003.
$$
We now assume that \eqref{vectorial} is true for $n$ (equivalently, we assume the identities (a.1),\dots, (c.2) above)  and prove it for $n+1$, that is, we must prove that
$$
\threesystem
{\sum_{j=0}^{n+1}4^jT_{n+1,j}=\sum_{j=0}^{n+2}4^jU_{n+1,j}=0 \quad & (1.1), (1.2)}
{\sum_{j=0}^{n+1}a_{k+j}T_{n+1,j}=\sum_{j=0}^{n+2}a_{k+j}U_{n+1,j}=0\quad\text{for all $k=1,\dots,n$}\quad & (2.1), (2.2)}
{\sum_{j=0}^{n+1}a_{n+1+j}T_{n+1,j}=1,\quad \sum_{j=0}^{n+2}a_{n+1+j}U_{n+1,j}=3 \quad &(3.1), (3.2)}
$$
We prove (1.1), (2.1), (3.1), (1.2), (2.2), (3.2) in that order. We recall from Lemma \ref{omitproof} that
$T_{nj}\ne 0$ only when $0\leq j\leq n$ and $U_{nj}\ne 0$ only when $0\leq j\leq n+1$.

\medskip

{\bf Proof of (1.1).} It follows from (a.1), (a.2) and the first equation in the scheme \eqref{recurrencetu}:
$T_{n+1,j}=-3T_{nj}+U_{nj}$.

\medskip

{\bf Proof of (2.1).} For $k=1,\dots, n-1$ follows from the first equation in the scheme \eqref{recurrencetu}, (b.1), (b.2). For $k=n$ it follows from (c.1), (c.2).

\medskip

{\bf Proof of (3.1).} From (1.1) and (2.1) we see that
$
B_{n+1}\vec T_{n+1}=0.
$
The vector $\vec T_{n+1}$ is then a  solution of a homogeneous system of $n+1$ equations in $n+2$ unknowns with coefficient matrix $B_{n+1}$. Therefore, there exists $\lambda\in\reals$ such that
$$
T_{n+1,j}=(-1)^j\lambda \det B_{n+1}^{(j)}
$$
for all $j=0,\dots,n+1$. We know from Lemma \ref{omitproof}  that $T_{n+1,n+1}=-1$ and from Lemma \ref{mainlemma} that  $\det{A_n}=(-1)^n$. Therefore:
$$
\begin{aligned}
-1&= T_{n+1,n+1}\\
&=(-1)^{n+1}\lambda \det{B_{n+1}^{(n+1)}}\\
&=(-1)^{n+1}\lambda \det{A_n}\\
&=(-1)^{n+1}(-1)^n\lambda\\
&=-\lambda
\end{aligned}
$$
hence $\lambda=1$ and, for all $j$:
$$
T_{n+1,j}=(-1)^j \det{B_{n+1}^{(j)}}.
$$
On the other hand we know from \eqref{lr} that for all $k$ one has $\sum_{j=0}^{k}(-1)^ja_{k+j}\det{B_k^{(j)}}=1$
hence, taking $k=n+1$:
$$
\sum_{j=0}^{n+1}a_{n+1+j}T_{n+1,j}=\sum_{j=0}^{n+1}(-1)^ja_{n+1+j}\det{B_{n+1}^{(j)}}=1
$$
and (3.1) follows.

\smallskip

{\bf Proof of (1.2).} One has from the second relation in  scheme \eqref{recurrencetu}:
$$
\sum_{j=0}^{n+2}4^jU_{n+1,j}=-4\sum_{j=0}^{n+2}4^jT_{nj}+\sum_{j=0}^{n+2}4^jU_{nj}+
\sum_{j=0}^{n+2}4^jT_{n+1,j-1}
$$
The first two terms are zero by (a.1) and (a.2), the third equals $4\sum_{j=0}^{n+2}4^{j-1}T_{n+1,j-1}
=4\sum_{i=0}^{n+1}4^iT_{n+1,i}
=0$ by (1.1).

\smallskip

{\bf Proof of (2.2)} One has from the second relation in scheme \eqref{recurrencetu}:
$$
\begin{aligned}
\sum_{j=0}^{n+2}a_{k+j}U_{n+1,j}&=-4\sum_{j=0}^{n+2}a_{k+j}T_{nj}+\sum_{j=0}^{n+2}a_{k+j}U_{nj}+\sum_{j=0}^{n+2}a_{k+j}T_{n+1,j-1}
\end{aligned}
$$
If $k\leq n-1$ the first two terms on the right are zero by (b.1) and (b.2), and then:
$$
\sum_{j=0}^{n+2}a_{k+j}U_{n+1,j}=\sum_{j=0}^{n+2}a_{k+j}T_{n+1,j-1}=\sum_{i=0}^{n+1}a_{k+i+1}T_{n+1,i}=0
$$
because of (2.1). If $k=n$ the right-hand side equals 
$$
\begin{aligned}
&-4\sum_{j=0}^{n+2}a_{n+j}T_{nj}+\sum_{j=0}^{n+2}a_{n+j}U_{nj}+\sum_{j=0}^{n+2}a_{n+j}T_{n+1,j-1}\\
&=-4\sum_{j=0}^na_{n+j}T_{nj}+\sum_{j=0}^{n+1}a_{n+j}U_{nj}+\sum_{i=0}^{n+1}a_{n+1+i}T_{n+1,i}
\\
&=-4+3+1\\
&=0
\end{aligned}
$$
by (c.1), (c.2) and (3.1).

\medskip

{\bf Proof of (3.2)} Set
\begin{equation}\label{set}
\sum_{j=0}^{n+2}a_{n+1+j}U_{n+1,j}=\lambda.
\end{equation}
We have to show that $\lambda=3$. As
$
T_{n+2,j}=-3T_{n+1,j}+U_{n+1,j}
$
we see that, from (1.1) and (1.2), we have $\sum_j4^jT_{n+2,j}=0$ and
$$
\sum_{j=0}^{n+2}a_{k+j}T_{n+2,j}=-3\sum_{j=0}^{n+1}a_{k+j}T_{n+1,j}+\sum_{j=0}^{n+2}a_{k+j}U_{n+1,j}.
$$
If $k\leq n$ this is zero by (2.1) and (2.2). When $k=n+1$ this is $\lambda-3$ by (3.1) and \eqref{set}. Expressed in matrix form, all this becomes the statement:
\begin{equation}\label{statement}
B_{n+2}\vec T_{n+2}=\fourvector {0}{\vdots}{0}{\lambda-3}.
\end{equation}
As $T_{n+2,n+2}=-1$ and $B_{n+2}^{(n+2)}=A_{n+1}$, taking the terms involving $T_{n+2,n+2}$  to the right-hand side we see that \eqref{statement} can be written:
$$
A_{n+1}\fourvector {T_{n+2,0}}{T_{n+2,1}}{\vdots}{T_{n+2,n+1}}=\fourvector {4^{n+2}}{a_{n+3}}{\vdots}{a_{2n+3}+\lambda-3}
$$
By Cramer's rule, $T_{n+2,0}$ is the ratio:
\begin{equation}\label{cramer}
T_{n+2,0}=\dfrac{\det{C_{n+2}}}{\det{A_{n+1}}},
\end{equation}
where $C_{n+2}$ is the $(n+2)\times(n+2)$ matrix obtained replacing the first column of $A_{n+1}$ by the column 
$$
\fourvector {4^{n+2}}{a_{n+3}}{\vdots}{a_{2n+3}+\lambda-3}=\fourvector {4^{n+2}}{a_{n+3}}{\vdots}{a_{2n+3}}+\fourvector {0}{0}{\vdots}{\lambda-3}.
$$
Accordingly:
$$
\det{C_{n+2}}=\det\matrice\quattro{4^{n+2}}{4}{\hdots}{4^{n+1}}
\quattro{a_{n+3}}{a_2}{\hdots}{a_{n+2}}
\quattro{\hdots}{\hdots}{\hdots}{\hdots}
\quattro{a_{2n+3}}{a_{n+2}}{\hdots}{a_{2n+2}}\ok
+
\det\matrice\quattro{0}{4}{\hdots}{4^{n+1}}
\quattro{0}{a_2}{\hdots}{a_{n+2}}
\quattro{\hdots}{\hdots}{\hdots}{\hdots}
\quattro{\lambda-3}{a_{n+2}}{\hdots}{a_{2n+2}}\ok.
$$
Reordering columns and taking the factor $4$ out of the determinants, we see:
$$
\det{C_{n+2}}=(-1)^{n+1}4\det{A'_{n+1}}+(-1)^{n+1}4(\lambda-3)\det{A'_n}.
$$
We know from Lemma \ref{mainlemma}  that
$$
\quad \det{A'_{n+1}}=(-1)^{n+1}(n+2), \quad \det{A'_n}=(-1)^n(n+1).
$$
hence
$$
\det{C_{n+2}}=4(n+2)-4(\lambda-3)(n+1).
$$
On the other hand, by \eqref{cramer} and Lemma \ref{omitproof}, as $T_{n+2,0}=(-1)^{n+3}4(n+2)$: 
$$
\begin{aligned}
\det{C_{n+2}}&=(\det{A_{n+1}})T_{n+2,0}\\
&=4(n+2)
\end{aligned}
$$
Comparing these  two last expressions we indeed get $\lambda-3=0$ hence $\lambda=3$. With this, the proof is complete.

%%%%%%%%%%%%%%%%%%%%%%%%%%%%%%%%%%%%%%%%%%%%%%%%%%%%%%

%%%%%%%%%%%%%%%%%%%%%%%%%%%%%%%%%%%%%%%%%%%%%%%%%%%
%%%%%%%%%%%%%%%%%%%%%%%%%%%%%%%%%%%%%%%%%%%%%%%%%%%

%%%%%%%%%%%%%%%%%%%%%%%%%%%%%%%%%%%%%%%%%%%%%%%%%%%%%

\section{Appendix: proof of Lemma \ref{mainlemma}} \label{appendix}

This section is based on the paper by Ulrich Tamm \cite{Tamm}, and we will follow closely the notation there. 

Given a sequence $\{c_0,c_1,c_2,\dots\}$ form the infinite matrix 
$$
A=\matrice\quattro {c_0}{c_1}{c_2}{\cdots}\quattro{c_1}{c_2}{c_3}{\cdots}\quattro{c_2}{c_3}{c_4}{\cdots}
\quattro{\vdots}{\vdots}{\vdots}{\ddots}\ok
$$
and let $A_n$ be the $n\times n$ sub-matrix in the upper left corner:
$$
A_1=(c_0), \quad A_2=\twomatrix {c_0}{c_1}{c_1}{c_2}, \quad A_3=\threematrix {c_0}{c_1}{c_2}{c_1}{c_2}{c_3}{c_2}{c_3}{c_4}, \dots
$$

In this way we obtain a sequence of Hankel matrices. We will also consider shifted sequences,  namely for all $k\geq 0$ we set
$$
A^{(k)}=\matrice\quattro {c_k}{c_{k+1}}{c_{k+2}}{\cdots}\quattro{c_{k+1}}{c_{k+2}}{c_{k+3}}{\cdots}\quattro{c_{k+2}}{c_{k+3}}{c_{k+4}}{\cdots}
\quattro{\vdots}{\vdots}{\vdots}{\ddots}\ok
$$
and let $A^{(k)}_n$ be the $n\times n$ sub-matrix in the upper left corner of $A^{(k)}$;  by convention $A^{(0)}=A$. For all $k=0,1,\dots$ we set
$$
d_n^{(k)}=\det A^{(k)}_n.
$$
By definition, the sequence $\{d_1^{(k)}, d_2^{(k)}, d_3^{(k)}, \dots\}$ is called the {\it Hankel transform} of the sequence $\{c_k,c_{k+1}, c_{k+2}\dots\}$.

\smallskip

Our interest is in the sequence 
$$
c_m=\binom{2m+1}{m}, \quad m\geq 0.
$$
It is just the sequence defined in \eqref{eien} with offset at $m=0$:
$$
c_m=a_{m+1}=\{1,3,10,35,126,462,1716, 6735, \dots\}.
$$
Then
$$
A^{(0)}=\matrice\cinque 13{10}{35}{\cdots}\cinque 3{10}{35}{126}{\cdots}\cinque {10}{35}{126}{462}{\cdots}
\cinque{\vdots}{\vdots}{\vdots}{\vdots}{\ddots}\ok, \quad\text{and}\quad
A^{(1)}=\matrice\cinque 3{10}{35}{126}{\cdots}\cinque {10}{35}{126}{462}{\cdots}\cinque {35}{126}{462}{1716}{\cdots}
\cinque{\vdots}{\vdots}{\vdots}{\vdots}{\ddots}\ok.
$$
From part b) of Proposition 2.1 in \cite{Tamm} we see that, for all $n$:
\begin{equation}\label{dn}
d_n^{(0)}=1, \quad d_n^{(1)}=2n+1,
\end{equation}
and the following expression holds for $k\geq 2$:
$$
d_n^{(k)}=\Pi _{1\leq i\leq j\leq k}\dfrac{i+j-1+2n}{i+j-1}.
$$

Now introduce an indeterminate $x$ and consider the infinite matrices
$$
A=\matrice\quattro 1 x{x^2}{\cdots}\quattro {c_0}{c_1}{c_2}{\cdots}\quattro{c_1}{c_2}{c_3}{\cdots}\quattro{c_2}{c_3}{c_4}{\cdots}
\quattro{\vdots}{\vdots}{\vdots}{\ddots}\ok, \quad
A^{(1)}=\matrice\quattro 1 x{x^2}{\cdots}\quattro {c_1}{c_2}{c_3}{\cdots}\quattro{c_2}{c_3}{c_4}{\cdots}\quattro{c_3}{c_4}{c_5}{\cdots}
\quattro{\vdots}{\vdots}{\vdots}{\ddots}\ok
$$
and the sequences of polynomials $\{P_n(x)\}, \{P_n^{(1)}(x)\}$ defined by
$$
\twosystem
{P_0(x)=1, \quad P_1(x)=\twodet 1x{c_0}{c_1}, \quad P_2(x)=\threedet 1x {x^2}{c_0}{c_1}{c_2}{c_1}{c_2}{c_3}, \quad ...}
{P_0^{(1)}(x)=1, \quad P_1^{(1)}(x)=\twodet 1x{c_1}{c_2}, \quad P_2^{(1)}(x)=\threedet 1x {x^2}{c_1}{c_2}{c_3}{c_2}{c_3}{c_4}, \quad ...}
$$
Recalling that $c_m=a_{m+1}$ we see that proving Lemma \ref{mainlemma} amounts to prove that, for all $n$:
\begin{equation}\label{pol}
P_n(4)=(-1)^n, \quad P_n^{(1)}(4)=(-1)^n(n+1).
\end{equation}
Note that, by \eqref{dn},  the leading coefficient of $P_n(x)$ (resp. $P_n^{(1)}(x)$) is $(-1)^{n}d_n=(-1)^n$ 
(resp.  $(-1)^nd_n^{(1)}=(-1)^n(2n+1)$). Then, the polynomials
\begin{equation}\label{tn}
t_n(x)=(-1)^{n}P_n(x), \quad t_n^{(1)}(x)=\dfrac{(-1)^n}{2n+1}P_n^{(1)}(x)
\end{equation}
are of degree $n$ and monic. In combinatorics, the sequence $\{t_n(x)\}$ is called the {\it sequence of orthogonal polynomials associated to $\{c_0,c_1,\dots\}$} (they coincide with the polynomials defined in (1.8) in \cite{Tamm}).

Taking into account  \eqref{pol} and \eqref{tn}, to prove Lemma \ref{mainlemma}  it is then sufficient to show the following fact.

\begin{lemme}\label{polt} For the polynomials defined in \eqref{tn} one has, for all $n$:
$$
t_n(4)=1,\quad t_n^{(1)}(4)=\dfrac{n+1}{2n+1}.
$$
\end{lemme}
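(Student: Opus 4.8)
The plan is to use that $t_n$ and $t_n^{(1)}$ are the monic orthogonal polynomials associated respectively to the Hankel sequence $\{c_m\}=\{\binom{2m+1}{m}\}$ and to its shift $\{c_{m+1}\}$, and to reduce everything to their three-term recurrences. Because every Hankel determinant $d_n^{(0)}$ of $\{c_m\}$ equals $1$ (by \eqref{dn}), the recurrence coefficient $\gamma_n=d_{n+1}^{(0)}d_{n-1}^{(0)}/(d_n^{(0)})^2$ equals $1$ for all $n\ge 1$, while $\beta_0=c_1/c_0=3$; the remaining coefficients satisfy $\beta_n=2$ for $n\ge 1$. (This last fact is the one recorded in Tamm's recurrence for the polynomials of (1.8) in \cite{Tamm}; equivalently it says that the coefficient of $x^{n-1}$ in the monic $t_n$ is $-(2n+1)$, the value of the Hankel determinant built from $(c_{i+j})_{0\le i,j\le n-1}$ with its last column replaced by $(c_n,\dots,c_{2n-1})^{\mathsf T}$.) Thus
$$t_0=1,\qquad t_1(x)=x-3,\qquad t_{n+1}(x)=(x-2)\,t_n(x)-t_{n-1}(x)\quad(n\ge 1).$$

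First I would evaluate this recurrence at $x=4$: for $n\ge 1$ it becomes $t_{n+1}(4)=2\,t_n(4)-t_{n-1}(4)$, and since $t_0(4)=1$ and $t_1(4)=1$, an immediate induction gives $t_n(4)=1$ for all $n$, which is the first identity of Lemma \ref{polt}. Then I would evaluate at $x=0$: for $n\ge 1$, $t_{n+1}(0)=-2\,t_n(0)-t_{n-1}(0)$ with $t_0(0)=1$, $t_1(0)=-3$; the characteristic polynomial $r^2+2r+1$ has the double root $r=-1$, so $t_n(0)=(-1)^n(A+Bn)$ for constants $A,B$, and matching the first two values forces $t_n(0)=(-1)^n(2n+1)$. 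In particular $t_n(0)\ne 0$ for every $n$.

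To reach $t_n^{(1)}$, I would use that the shifted sequence $\{c_{m+1}\}$ is the moment sequence of the linear functional $p\mapsto\mathcal{L}[x\,p]$, where $\mathcal{L}$ has moments $\{c_m\}$, so its monic orthogonal polynomials are the kernel polynomials of $\mathcal{L}$ at the origin. Since the shifted Hankel determinants $d_n^{(1)}=2n+1$ are nonzero and $t_n(0)\ne 0$, Christoffel's theorem (equivalently, the Christoffel--Darboux formula at $0$) yields the monic identity
$$x\,t_n^{(1)}(x)=t_{n+1}(x)-\frac{t_{n+1}(0)}{t_n(0)}\,t_n(x),$$
which checks at $n=0$ as $x\cdot 1=(x-3)+3\cdot 1=x$. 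Evaluating at $x=4$, using $t_{n+1}(0)/t_n(0)=-(2n+3)/(2n+1)$ together with the values found above,
$$4\,t_n^{(1)}(4)=t_{n+1}(4)+\frac{2n+3}{2n+1}\,t_n(4)=1+\frac{2n+3}{2n+1}=\frac{4(n+1)}{2n+1},$$
hence $t_n^{(1)}(4)=(n+1)/(2n+1)$, the second identity, and Lemma \ref{polt} follows.

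The only genuinely delicate step is the first one: pinning down $\beta_n=2$ for all $n\ge 1$, i.e.\ that after its first term $\{t_n\}$ is a translated Chebyshev sequence. If Tamm's paper records the three-term recurrence of the polynomials of (1.8) this is immediate; otherwise it must be extracted from the moments via the modified Hankel determinant mentioned above, whose successive values $2n+1$ are exactly the partial sums $\beta_0+\dots+\beta_{n-1}$. After that, the two evaluations are just the degenerate specialisations of the Chebyshev recurrence ($x=4$ giving the double root $1$, whence $t_n(4)$ constant; $x=0$ giving the double root $-1$), combined with the classical Christoffel relation for multiplication of the moment functional by $x$.
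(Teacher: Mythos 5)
Your proof of $t_n(4)=1$ is essentially the paper's: both pin down the three-term recurrence $t_n(x)=(x-2)t_{n-1}(x)-t_{n-2}(x)$ (for $n\ge 2$) from the Hankel data of $\{c_m\}$ via Tamm, and both then read off $t_n(4)=1$ from the degenerate constant-coefficient recursion. For the second identity you take a genuinely different route. The paper extracts from Tamm the full recurrence coefficients $\alpha_n^{(1)},\beta_{n-1}^{(1)}$ for the shifted polynomials $t_n^{(1)}$ and proves $t_n^{(1)}(4)=(n+1)/(2n+1)$ by a direct induction on that recursion. You instead evaluate the $t_n$-recurrence at $x=0$ to get $t_n(0)=(-1)^n(2n+1)$, observe that $\{c_{m+1}\}$ is the moment sequence of $\mathcal L_x[p]=\mathcal L[xp]$, and invoke the classical Christoffel kernel-polynomial identity $x\,t_n^{(1)}(x)=t_{n+1}(x)-\frac{t_{n+1}(0)}{t_n(0)}t_n(x)$ (valid since $d_n^{(1)}\neq 0$ and $t_n(0)\neq 0$), which yields $t_n^{(1)}(4)$ in one line. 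Your route is cleaner in that it avoids computing the shifted recurrence coefficients $\alpha_n^{(1)},\beta_{n-1}^{(1)}$ entirely and the resulting messy induction, at the cost of importing Christoffel's theorem; it also cleanly explains the appearance of $2n+1$ in the answer as $\pm t_n(0)$. The one slightly soft spot in your writeup is the justification of $\beta_n=2$ for $n\ge 1$: you sketch two possible justifications (Tamm, or a modified Hankel determinant) without carrying either out in full, whereas the paper explicitly computes $\alpha_{n+1}=q_{n+1}+e_n=2$ from Tamm's closed forms for $q_n^{(0)},e_n^{(0)}$; if you lean on Tamm as the paper does, this is fine, but the "modified Hankel determinant" fallback would need to be made precise to stand on its own.
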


\begin{proof}
It turns out that $\{t_n(x)\}$ and $\{t_n^{(1)}(x)\}$ satisfy a three-term recursive relation:
\begin{equation}\label{rp}
\twosystem
{t_n(x)=(x-\alpha_n)t_{n-1}(x)-\beta_{n-1}t_{n-2}(x), \quad t_0(x)=1, t_1(x)=x-\alpha_1}
{t_n^{(1)}(x)=(x-\alpha_n^{(1)})t_{n-1}^{(1)}(x)-\beta_{n-1}^{(1)}t_{n-2}^{(1)}(x) \quad t_0^{(1)}(x)=1, 
t_1^{(1)}(x)=x-\alpha_1^{(1)}}
\end{equation}
for suitable numerical sequences $\alpha_n\doteq \alpha_n^{(0)},\beta_n\doteq \beta_n^{(0)}, \alpha_n^{(1)}, \beta_{n-1}^{(1)}$. This sequences have been computed in \cite{Tamm}. 
In fact, for arbitrary $k$, the coefficients  $\alpha_n^{(k)}, \beta_{n-1}^{(k)}$ can be computed in terms of the coefficients $q_n^{(k)},e_n^{(k)}$ in the continued fraction expansion of $1-xF(x)$, where 
$F(x)=\sum_{m=0}^{\infty}c_{m+k}x^m$ (see (1.14), (1.19)  and (1.20) in \cite{Tamm}):
\begin{equation}\label{qe}
\alpha_1=q_1\quad \text{and, for $n\geq 1$}: \quad  \alpha_{n+1}^{(k)}=q_{n+1}^{(k)}+e_{n}^{(k)},\quad \beta_n^{(k)}=q_n^{(k)}\cdot e_n^{(k)},
\end{equation}
For our sequence $c_m=\binom{2m+1}{m}$ the corresponding coefficients have been computed in Corollary 2.1, equation (2.6):
$$
q_n^{(k)}=\dfrac{(2n+2k)(2n+2k+1)}{(2n+k-1)(2n+k)},\quad
e_n^{(k)}=\dfrac{(2n-1)(2n)}{(2n+k)(2n+k+1)}.
$$
In particular,
$$
\twosystem
{q_n=q_n^{(0)}=\dfrac{2n+1}{2n-1}}
{e_n=e_n^{(0)}=\dfrac{2n-1}{2n+1}}
\quad\text{and}\quad 
\twosystem
{q_n^{(1)}=\dfrac{(2n+2)(2n+3)}{(2n)(2n+1)}}
{e_n^{(1)}=\dfrac{(2n-1)(2n)}{(2n+1)(2n+2)}.}
$$

From \eqref{qe}  we obtain $\alpha_1=q_1=3$, $\alpha_1^{(1)}= q_1^{(1)}=\frac{10}{3}$ and for $n\geq 2$ (after some calculations):
\begin{equation}\label{kzero}
\twosystem
{\alpha_n=2}
{\beta_{n-1}=1},
\quad\text{and}\quad
\twosystem
{\alpha_n^{(1)}=2+\frac{4}{(2n+1)(2n-1)}}
{\beta_{n-1}^{(1)}=\dfrac{(2n-3)(2n+1)}{(2n-1)^2}}
\end{equation}

From \eqref{rp} and \eqref{kzero}  one has the following recursive scheme for the sequence $\{t_n(x)\}$:
$$
\threesystem
{t_n(x)=(x-2)t_{n-1}(x)-t_{n-2}(x)}
{t_0(x)=1}
{t_1(x)=x-3}
$$
Setting $X_n=t_n(4)$ we obtain
$$
\threesystem
{X_n=2X_{n-1}-X_{n-2}}
{X_0=1}
{X_1=1}
$$
hence $X_n=1$ for all $n$. That is:
$$
t_n(4)=1
$$
for all $n$. This shows the first relation in the Lemma. 

\medskip

About the sequence $\{t_n^{(1)}(x)\}$ we know:
$$
\threesystem
{t_n^{(1)}(x)=(x-\alpha_n^{(1)})t_{n-1}^{(1)}(x)-\beta_{n-1}^{(1)}t_{n-2}^{(1)}(x)}
{t_0^{(1)}(x)=1}
{t_1^{(1)}(x)=x-\frac{10}3}
$$
where $\alpha_n^{(1)}$ and $\beta_{n-1}^{(1)}$ are as in \eqref{kzero}. 
We are interested in the sequence 
$
X_n=t_n^{(1)}(4),
$
which obeys the recursive law:
$$
\threesystem
{X_n=(4-\alpha_n^{(1)})X_{n-1}-\beta_{n-1}^{(1)}X_{n-2}}
{X_0=1}
{X_1=\frac 23}
$$
By induction on $n$, let us show that $X_n=\frac{n+1}{2n+1}$. In fact this holds for $n=1$. Assume it to be true for all $k\leq n-1$. By \eqref{kzero}:
$$
4-\alpha_n^{(1)}=\dfrac{8n^2-6}{(2n-1)(2n+1)}, \quad \beta_{n-1}^{(1)}=\dfrac{(2n-3)(2n+1)}{(2n-1)^2}
$$
hence:
$$
\begin{aligned}
X_n&=(4-\alpha_n^{(1)})X_{n-1}-\beta_{n-1}^{(1)}X_{n-2}\\
&=\dfrac{8n^2-6}{(2n-1)(2n+1)}\cdot\frac{n}{2n-1}-\dfrac{(2n-3)(2n+1)}{(2n-1)^2}\cdot\frac{n-1}{2n-3}\\
&=\frac{n+1}{2n+1}
\end{aligned}
$$
which shows the claim. With this, the proof of Lemma \ref{mainlemma} is complete.

\end{proof}

{\bf Remark.} We point out that the expression of $\alpha_{n+1}^{(k)}$ in the statement of Corollary 2.3 in \cite{Tamm} is wrong; this is due to an incorrect algebraic manipulation of $q_{n+1}^{(k)}+e_n^{(k)}$ in the proof of Corollary 2.3 there. The correct value of $\alpha_{n+1}^{(1)}$ shown in \eqref{kzero} is directly computed from the expressions of $q_n^{(1)}$ and $e_n^{(1)}$ taken from Corollary 2.1, equation (2.6) of \cite{Tamm}. 

\medskip

{\bf Acknowledgements.} We wish to thank Sylvestre Gallot for his interest and for many enlightening discussions,  and Stefano Capparelli for pointing out to us  the role of orthogonal polynomials  in the combinatorial part of the paper (Lemma \ref{mainlemma}).

%%%%%%%%%%%%%%%%%%%%

%%%%%%%%%%%%%%%%%%%%%%%%%%%%%%%%%%%%%%%%%%%%%%%%%%%%%%


\begin{thebibliography}{1}

\bibitem{vdB-G} M. van den Berg and P. Gilkey \emph{Heat content asymptotics for a Riemannian manifold with boundary},
J. Funct. Anal. \textbf{120} (1994), 48-71

\bibitem{vdB-G2} M. van den Berg and P. Gilkey \emph{The heat equation with inhomogeneous Dirichlet boundary conditions}, Comm. Anal. Geom. \textbf{7}, n. 2 (1999), 279-294

\bibitem{Ber} C.A. Berenstein \emph{An inverse spectral theorem and its applications to the Pompeiu problem}, J. Anal. Math. \textbf{37} (1980), 128-144

\bibitem{Ber2} C.A.  Berenstein \emph{On an overdetermined Neumann problem}, Geometry Seminars (1986) 21-26, Univ. Stud. Bologna, Bologna 1988

\bibitem{B-S} C. A. Berenstein and M. Shahshahani, \emph{Harmonic analysis and the Pompeiu problem}, Amer. J. Math. \textbf{105} (1983), 1217-1229

\bibitem{B-Y} C. Berenstein and P. Yang, \emph{An inverse Neumann problem}, J. Reine Angew. Math. \textbf{382} (1987), 1-21

\bibitem{Caf} L. Caffarelli, \emph{The regularity of free boundaries in higher dimensions}, Acta Math. \textbf{139} (1977), 155-184

\bibitem{C-M} S. Capparelli and P. Maroscia, \emph{On two sequences of othogonal polynomials related to Jordan blocks}, Mediterranean J. Math. \textbf{10} n. 4 (2013), 1609-1630

\bibitem{Car} E. Cartan, \emph{Familles de surfaces isoparam\'etriques dans les espaces \`a courbure constante}, Annali di Mat. \textbf{17} (1938), 177-191

\bibitem{Cha} L. Chakalov, \emph{Sur un probleme de D. Pompeiu}, Annuaire Univ. Sofia Phys.-Math, Livre I, 40 (1944), 1-44

\bibitem{ES-I} A. El Soufi and S. Ilias \emph{Domain deformations and eigenvalues of the Dirichlet Laplacian in Riemannian manifolds}, Illinois J. Math. \textbf{51} (2007) 645-666

\bibitem{G-S} P.R. Garadedian and M. Schiffer, \emph{Variational problems in the theory of elliptic partial differential equations}, J. Rational Mech. Anal. \textbf{2} (1953) 137-171

\bibitem{K-N} D. Kinderleher and L. Nirenberg, \emph{Regularity in free boundary problems}, Ann. Scuola Norm. Sup. Pisa Cl. Sci. \textbf{4} (1977). 373-391

\bibitem{K-P} S. Kumaresan and J. Prajapat, \emph{Serrin's result for hyperbolic space and sphere},
Duke Math. J. \textbf{91} n.1 (1998), 17-28

\bibitem{Liu} G. Liu, \emph{Symmetry theorems for the overdetermined eigenvalue problems}, J. Diff. Eq. \textbf{233} (2007) 585-600

\bibitem{M-S} R. Magnanini and S. Sakaguchi,  \emph{Heat conductors with a stationary isothermic surface}, Ann. Math. Second Series, \textbf{156} no. 3 (2002), 931-946 

\bibitem{Mol} R. Molzon, \emph{Symmetry and overdetermined boundary problems}, Forum Math. \textbf{3} (1991), 143-156

\bibitem{Mun} H.F. M\"unzner, \emph{Isoparametrische hyperflachen in spharen I, II}, Math. Ann. \textbf{251} (1980), 57-71, and \textbf{256} (1981), 215-232

\bibitem{P-S} F. Pacard and P. Sicbaldi, \emph{Extremal domains for the first eigenvalue of the Laplace-Beltrami operator}, Ann. Institut Fourier \textbf{59} No. 2 (2009), 515-542

\bibitem{R-S} S. Raulot and A. Savo \emph{On the first eigenvalue of the Dirichlet-to-Neumann operator on forms}
J. Funct. Anal. \textbf {262} (2012), 889-914

\bibitem{Savo1} A. Savo, \emph{Uniform estimates and the whole asymptotic series of the heat content on manifolds}, Geom. Dedicata \textbf{73} (1998), 181-214

\bibitem{Savo2} A. Savo, \emph{Asymptotics of the heat flow for domains with smooth boundary}, Comm. Anal. Geom. \textbf{12} No. 3 (2004), 671-702

\bibitem{Sch} M. Schiffer, \emph{Hadamard's formula and variation of domain functions}, Amer. J. Math. \textbf{68} (1946), 417-448

\bibitem{Ser}
J. Serrin, \emph{A symmetry problem in potential theory}, Arch. Rational Mech. Anal. \textbf{43} (1971), 304-318

\bibitem{Shk} V. Shklover, \emph{Schiffer problem and isoparametric hypersurfaces}, Revista Mat. Iberoamericana \textbf{16} n. 3 (2000), 529-569

\bibitem{Tamm} U. Tamm, \emph{Some aspects of Hankel matrices in Coding Theory and Combinatorics}, The Electronic Journal of Combinatorics \textbf{8} (2001) \#    A1

\bibitem{Tho} G. Thorbergsson, \emph{A survey on isoparametric hypersurfaces and their generalizations}, Handbook of Differential Geometry, Vol. 1, 963-995, North-Holland Amsterdam 2000

\bibitem{Wan} Q.M. Wang, \emph{Isoparametric functions on Riemannian manifolds, I}, Math. Ann. \textbf{277} (1987), 639-646

\bibitem{Wei} H.F. Weinberger, \emph{Remark on the preceeding paper by Serrin}, Arch. Rational Mech. Anal. \textbf{43} (1971), 319-320

\bibitem{Wil} S. A.  Williams, \emph{A partial solution of the Pompeiu problem}, Math. Ann. \textbf{223} (1976), 183-190

\bibitem{Wil2} S. A.  Williams, \emph{Analyticity of the boundary for Lipschitz domains without the Pompeiu property},
Indiana Univ. Math. J. \textbf {30} (1981), 357-369



\vspace{0.5cm}     
Author address:     
\nopagebreak     
\vspace{5mm}\\     
\parskip0ex     
\vtop{\hsize=6cm\noindent\obeylines}     
\vtop{     
\hsize=8cm\noindent     
\obeylines     
  
Alessandro Savo
Dipartimento SBAI, Sezione di Matematica 
Sapienza Universit\`a di Roma
Via Antonio Scarpa 16
 00161 Roma, Italy         
}     
     
\vspace{0.5cm}     
     
E-Mail:     
{\tt alessandro.savo@sbai.uniroma1.it  } 





\end{thebibliography}
\end{document}